\def\l@subsection{\@tocline{2}{0pt}{2.5pc}{2.5pc}{}}
\def\chapter{\clearpage\thispagestyle{plain}\global\@topnum\z@ 
\@afterindenttrue \secdef\@chapter\@schapter}
\newtheorem{thm} {Theorem} [section]
\newtheorem{prop}{Proposition} [section]
\newtheorem{lem} {Lemma} [section]
\newtheorem{cornn}{Corollary}
\theoremstyle{definition}
\newtheorem{rem} {Remark} [section]
\newtheorem{rems} [rem]{Remarks}
\newcommand{\mf}{\mathfrak}
\newcommand{\mc}{\mathcal}
\newcommand{\mb}{\mathbb}
\newcommand{\ov}{\overline}
\newcommand{\sm}{\setminus}         
\newcommand{\ot}{\otimes}           
\newcommand{\la}{\langle}
\newcommand{\ra}{\rangle}
\newcommand{\Hom}{{\rm Hom}}        
\newcommand{\End}{{\rm End}}
\newcommand{\Mat}{{\rm Mat}}
\newcommand{\Spec}{{\rm Spec}}
\newcommand{\Maxspec}{{\rm Maxspec}}
\newcommand{\Ker}{{\rm Ker}}
\newcommand{\rk}{{\rm rk}}
\let\ttie\t
\newcommand{\tie}[1]{{\let\t\ttie \ttie#1}}
\renewcommand{\t}{\mf{t}}  
\newcommand{\GL}{{\rm GL}}
\newcommand{\ve}{\varepsilon}
\begin{document}

\title{On embeddings of certain spherical homogeneous spaces in prime characteristic}

\begin{abstract}
Let $\mc G$ be a reductive group over an algebraically closed field of characteristic $p>0$.
We study embeddings of homogeneous $\mc G$-spaces that are induced from the $G\times G$-space $G$, $G$ a suitable reductive group,
along a parabolic subgroup of $\mc G$. We give explicit formulas for the canonical divisors and for the divisors of
$B$-semi-invariant functions. Furthermore, we show that, under certain mild assumptions, any (normal) equivariant
embedding of such a homogeneous space is canonically Frobenius split compatible with certain subvarieties and has an equivariant
rational resolution by a toroidal embedding. In particular, all these embeddings are Cohen-Macaulay. 

Examples are the $G\times G$-orbits in normal reductive monoids with unit group $G$.
Further examples are the open $\mc G$-orbits of the well-known determinantal varieties and the varieties
of (circular) complexes.

Finally we study the Gorenstein property for the varieties of circular complexes and for a related reductive monoid.
\end{abstract}

\author[R.\ H.\ Tange]{Rudolf Tange}

\keywords{equivariant embedding, spherical homogeneous space, Frobenius splitting}
\thanks{2010 {\it Mathematics Subject Classification}. 14L30, 14M27, 57S25, 20M32.}

\maketitle{}
\markright{\MakeUppercase{On embeddings of certain spherical homogeneous spaces}}

\section*{Introduction}
Let $k$ is an algebraically closed field of prime characteristic $p$ and let $\mc G$ be a connected reductive group. We are interested in spherical homogeneous spaces $\mc G/\mc H$ for which there exists a parabolic subgroup $\mc P^-$ of $\mc G$ such that $\mc H\subseteq\mc P^-$ and $\mc P^-/\mc H$ is the $G\times G$-space $G$, for a suitable reductive group $G$. More precisely, there exists an epimorphism $\pi:\mc P^-\to G\times G$ such that $\mc H$ is the inverse image of the diagonal under $\pi$. In this case we can write $\mc G/\mc H=\mc G\times^{\mc P^-}G$: $\mc G/\mc H$ is induced from the homogeneous $G\times G$-space $G$. Recall that for a $\mc P^-$-variety $X$, $\mc G\times^{\mc P^-}G$ is the quotient of $\mc G\times X$ by the $\mc P^-$-action $h\cdot(g,x)=(gh^{-1},hx)$. The idea is that the well-known nice properties of (normal) $G\times G$-equivariant embeddings of $G$ should carry over to the $\mc G$-equivariant embeddings of $\mc G/\mc H$. The idea of our approach is based on \cite[Sect.~4]{DeC}, but our homomorphisms $\pi:\mc P^-\to G\times G$ are more general those in \cite[Sect.~4]{DeC}, see the end of Section~\ref{s.induction}. We also allow simple factors of the Levi of $\mc P^-$ to be in the kernel of $\pi$.

For results on $B\times B$-orbit closures in $G\times G$-equivariant embeddings of $G$ we refer to \cite{HeTh1} which uses the notions of strong and global $F$-regularity.

The main results of this paper are contained in Section~\ref{s.Ginduction}. In Proposition~\ref{prop.valuationcone} we give the divisor of a $\mc B$-semi-invariant function on $\mc G/\mc H$. As pointed out in Remark~\ref{rems.divisor}.1 one can easily extend this formula to one for the divisor on any $\mc G/\mc H$-embedding. In Theorem~\ref{thm.canonicaldivisor} we give the canonical divisor of any $\mc G/\mc H$-embedding. Proposition~\ref{prop.wfulpicard} gives the basic properties of the wonderful compactification of a suitable quotient of $\mc G/\mc H$. Our main result is Theorem~\ref{thm.Frobsplratres} which states that any $\mc G/\mc H$-embedding has an equivariant rational resolution by a toroidal embedding and that it is $\mc B$-canonically Frobenius split, compatible with the $\mc G$-orbit closures, the irreducible components of the complement of the open $\mc B$-orbit and those of the complement of the open $\mc B^-$-orbit. The results in Section~\ref{s.Ginduction} are extensions of known results for $G$-embeddings, see \cite{Rit2} and \cite[Ch.~6]{BriKu}. An important point in our general case is that the closed orbit in the wonderful compactification is not isomorphic to the full flag variety $\mc G/\mc B^-$. So we have to consider embeddings of an auxiliary homogeneous space $\mc G'/\mc H'$ and the wonderful compactification of a suitable quotient of this space. The smooth complete toroidal embeddings of these spaces are canonically split by a $(p-1)$-th power. These splittings are needed to prove rationality of certain resolutions using the Mehta-Van der Kallen Theorem \cite{MvdK}.

We now describe the contents of the paper in some more detail. In Section~\ref{s.prelim} we introduce the notation and we gather some general technical results, most of which occur, in some form, in the literature. In Section~\ref{s.normalorbitclosure} we show that all $G$-orbit closures in a spherical variety which is canonically Frobenius split are normal.
In Section~\ref{s.induction} we give some general results on induction from homogeneous spaces $G/H$ satisfying certain conditions. Some of these results occur in some form in the literature, but in this case they had to reformulated for our purposes. In Proposition~\ref{prop.induction} we show that the valuation cone of $\mc G/\mc H$ is the same as that of $G/H$. The main result in this section is Proposition~\ref{prop.toroidal} which shows that all toroidal embeddings of $\mc G/\mc H$ are induced from a toroidal $G/H$-embedding. In fact this gives a one-one correspondence.
Section~\ref{s.Ginduction} contains the main results of the paper that we described above. We also show that all $G\times G$-orbits in a normal reductive monoid with unit group $G$ occur in our class of homogeneous spaces.
In Section~\ref{s.examples} we describe our construction in the case of (circular) complexes (only of length $2$) and a reductive monoid $M$. We also determine the class group and the canonical divisor in the case of circular complexes and the monoid $M$. So in these cases we can determine when the variety is Gorenstein.

%

\section{Preliminaries}\label{s.prelim}
Throughout this paper $k$ is an algebraically closed field of prime characteristic $p$ and $G$ denotes a connected algebraic group over $k$, which we will assume to be reductive after the next lemma. The results in this paper that do not involve Frobenius splitting are also valid in characteristic $0$. Note that in this case normality of orbit closures and existence of rational resolutions holds for all spherical varieties. See \cite[Thm.~10]{Pop} and \cite[Prop.~3.5]{BriP}.

A {\it generalised valuation} of an irreducible variety $X$ is a pair $(Y,\nu)$, where $Y$ is an irreducible closed subvariety of $X$ and $\nu$ is a valuation (see \cite{ZS}) of the field $\mc O_{X,Y}/{\mf m_{X,Y}}$ with $\nu|_k$ trivial. Here $\mc O_{X,Y}$ and ${\mf m_{X,Y}}$ consist of the rational functions on $X$ that are defined at some point of  $Y$, respectively vanish on $Y$. Note that $\mc O_{X,Y}/{\mf m_{X,Y}}=k(Y)$, the field of rational functions on $Y$. We usually consider $\nu$ as a map on $\mc O_{X,Y}$ rather than $\mc O_{X,Y}/{\mf m_{X,Y}}$. Then $\mf m_{X,Y}$ is the inverse image of $\infty$. If $f\in\mc O_{X,Y}$, then we say that $\nu(f)$ is defined. Note that an ordinary valuation on $X$ is a generalised valuation $(Y,\nu)$ with $Y=X$. If $X$ is an irreducible $G$-variety and $f\in k(X)$, then we define $f^g$ by $f^g(x)=f(g\cdot x)$, whenever this is defined. The lemma below is included to make the proof of \cite[Prop.~4.1]{Bri2} more transparent, see Proposition~\ref{prop.limit} below. The problem with the proof in \cite[Prop.~4.1]{Bri2} is that the expression $f(g\lambda(t)x)$ is not defined for all $g\in G$. What is really needed is a good definition of $\nu_\lambda$.

\begin{lem}[cf.~{\cite[Lem.~10 and 11]{Sum1},\cite[Lem.~3.2]{LV},\cite[Lem.~1.4]{Knop}}]\label{lem.invval}
Let $X$ be an irreducible $G$-variety and let $(Y,\nu)$ be a generalised valuation of $X$. Then there exists a unique $G$-invariant generalised valuation $(\tilde Y,\tilde\nu)$ of $X$ such that
\begin{enumerate}[{\rm(1)}]
\item $\tilde Y=\overline{\bigcup_{g\in G}gY}$.
\item For every $f\in k(X)$ there exists a nonempty open subset $U_f$ of $G$ such that for all $g\in U_f$ we have $f^g\in\mc O_{X,Y}$ and $\tilde\nu(f)=\nu(f^g)$.
\end{enumerate}
\end{lem}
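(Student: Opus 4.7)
The plan is to build $\tilde\nu$ by first extending $\nu$ to a valuation $\bar\nu$ on $k(G\times Y)$ via a Gauss-style construction, then restricting it along the dominant action map. First, set $\tilde Y:=\overline{\mu(G\times Y)}$, where $\mu:G\times Y\to X$ denotes the action morphism $(g,y)\mapsto gy$. Connectedness of $G$ and irreducibility of $Y$ imply that $G\times Y$ is irreducible, hence so are $\mu(G\times Y)$ and $\tilde Y$; $G$-invariance of $\tilde Y$ is immediate, proving (1).

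For $\tilde\nu$, note that since $k$ is algebraically closed the varieties $G$ and $Y$ are geometrically irreducible, so $k(G)\otimes_k k(Y)$ is a domain with fraction field $k(G\times Y)$. Picking a transcendence basis $t_1,\ldots,t_n$ of $k(G)/k$, the extension $k(G\times Y)/k(Y)(t_1,\ldots,t_n)$ is finite algebraic. I would extend $\nu$ first to $k(Y)(t_1,\ldots,t_n)$ by the Gauss formula $\bar\nu(\sum_\alpha a_\alpha t^\alpha):=\min_\alpha\nu(a_\alpha)$, and then along the finite algebraic step to a valuation $\bar\nu$ on $k(G\times Y)$. Since the Gauss extension is trivial on $k(t_1,\ldots,t_n)$ and extensions of trivial valuations along algebraic extensions are trivial, $\bar\nu$ is trivial on $k(G)$. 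The dominance of $\mu$ gives an embedding $\mu^*:k(\tilde Y)\hookrightarrow k(G\times Y)$, and I define $\tilde\nu$ on $\mc O_{X,\tilde Y}$ as the composition of $\mc O_{X,\tilde Y}\to k(\tilde Y)$ with $\bar\nu\circ\mu^*$.

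To verify (2), for $f\in\mc O_{X,\tilde Y}$ let $F:=\mu^*(f|_{\tilde Y})\in k(G\times Y)$, which is just the rational function $(g,y)\mapsto f(gy)$. The key claim is that there is a nonempty open $U_f\subseteq G$ such that for every $g_0\in U_f$ the specialisation $F(g_0,\cdot)\in k(Y)$ is defined (equivalently $f^{g_0}\in\mc O_{X,Y}$) and $\nu(F(g_0,\cdot))=\bar\nu(F)$. This is the standard behaviour of the Gauss valuation under specialisation: after writing $F$ in the coordinates $t_i$ on $G$, the $\nu$-minimal terms in the numerator and denominator do not cancel on a nonempty open of specialisation parameters, and the algebraic factor contributes nothing because $\bar\nu$ is trivial on $k(G)$.

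Once (2) holds, $G$-invariance and uniqueness are both immediate. For $h\in G$ property (2) gives $\tilde\nu(f^h)=\nu((f^h)^g)=\nu(f^{hg})$ for $g$ in a nonempty open of $G$; since $g\mapsto hg$ is an automorphism of $G$, the point $hg$ also ranges over a nonempty open, and the common value is $\tilde\nu(f)$. For uniqueness, two candidates $\tilde\nu_1,\tilde\nu_2$ must agree on each $f$ because the defining open sets $U_{f,1},U_{f,2}$ are both nonempty in the irreducible $G$ and hence intersect. The principal obstacle is establishing the generic-specialisation equality $\nu(F(g_0,\cdot))=\bar\nu(F)$, particularly when tracking $\bar\nu$ across the finite algebraic step $k(G\times Y)/k(Y)(t_1,\ldots,t_n)$; the cleanest route may well be to adapt the more conceptual arguments of \cite{Sum1}, \cite{LV} or \cite{Knop} rather than computing directly in coordinates.
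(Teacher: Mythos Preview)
Your route --- extend $\nu$ to a Gauss-type valuation $\bar\nu$ on $k(G\times Y)$ and then restrict along $\mu^*$ --- is conceptually different from the paper's and has a genuine gap at exactly the point you flag. The paper never constructs an auxiliary valuation. For $f\in\mc O_{X,\tilde Y}$ it writes $f\circ\sigma=F_1/F_2$ with $F_i\in k[G]\otimes\mc O_{X,Y}$ (working on an affine open meeting $Y$), and then uses a \emph{finite-dimensionality} trick: the specialisations $F_{i,g}$ lie in a finite-dimensional subspace $V_i\subseteq\mc O_{X,Y}$ (spanned by the tensor factors), so $\nu$ attains a minimum $a_i$ on $V_i$, and $\{v\in V_i:\nu(v)>a_i\}$ is a proper linear subspace. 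Since $g\mapsto F_{i,g}$ is a morphism of varieties into $V_i$, the set $U_i=\{g:\nu(F_{i,g})=a_i\}$ is open and nonempty; on $U_1\cap U_2$ one has $\nu(f^g)=a_1-a_2$, and this constant is then \emph{taken as the definition} of $\tilde\nu(f)$. No extended valuation on a bigger field is ever needed.

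In your argument the purely transcendental Gauss step and its specialisation behaviour are fine, but the passage through the finite extension $k(G\times Y)/k(Y)(t_1,\ldots,t_n)$ is not routine: the extension of $\bar\nu$ across that step is not canonical, and you have not shown that some (or every) choice satisfies $\nu(F(g_0,\cdot))=\bar\nu(F)$ for generic $g_0$. In characteristic $p$ this extension may be inseparable, so a direct coordinate computation is delicate. Your closing sentence concedes as much. This is a real gap, not missing bookkeeping: establishing that $g\mapsto\nu(f^g)$ is generically constant is the entire content of the lemma, and the paper's finite-dimensional argument is precisely the device that proves it directly, bypassing any need to name that constant as the value of an extended valuation.
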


\begin{proof}
Let $\sigma:G\times X\to X$ be the action morphism and let $f\in\mc O_{X,\tilde Y}$. It suffices to show that for some nonempty open subset $U_f$ of $G$, we have $f^g\in\mc O_{X,Y}$ for all $g\in U_f$ and $\nu(f^g)$ is constant on $U_f$, since then we can define $\tilde\nu(f)$ to be this constant value. We have $f\circ\sigma\in\mc O_{G\times X,G\times Y}$. If $O$ is an affine open subset of $X$ intersecting $Y$, then $G\times O$ is affine and $\mc O_{G\times X,G\times Y}=\mc O_{G\times O,G\times Y\cap O}$. So we can write $f\circ\sigma=F_1/F_2$ with $F_1,F_2\in k[G]\ot\mc O_{X,Y}$ and $F_2|_{G\times Y}\ne0$. Put $F_{i,g}(x)=F_i(g,x)$. There exists $(g_0,y)\in G\times Y$ which is in the domains of $F_1$ and $F_2$ and with $F_2(g_0,y)\ne0$. It follows that for $g$ in some nonempty open set $U$ of $G$, $F_{i,g}\in\mc O_{X,y}$ and $F_{2,g}\notin\mf m_{X,y}$. 
We can write $F_i=\sum_jh_{ij}\ot f_{ij}$ with $h_{ij}\in k[G]$ and $f_{ij}\in\mc O_{X,Y}$. Let $V_i\subseteq \mc O_{X,Y}$ be the span of the $F_{i,g}$, $g\in U$. It is finite dimensional, since it is contained in the span of the $f_{ij}$. So $\nu$ must take a minimum value $a_i$ on $V_i$ and $a_2\ne\infty$. Then $\{f'\in V_i\,|\,\nu(f')>a_i\}$ is a proper subspace of $V_i$, or possibly empty when $i=1$. Now $g\mapsto F_{i,g}:U\to V_i$ is a morphism, so $U_i=\{g\in U\,|\,\nu(F_{i,g})=a_i\}$ is nonempty and open. We have $f^g=F_{1,g}/F_{2,g}$, so the set $U_f=U_1\cap U_2$ will do the job.
\end{proof}
From now on we will always assume that valuations are $\mb Q$-valued and discrete. So their value group is isomorphic to $\mb Z$ or trivial.
Let $\nu_t$ be the valuation of the field $k((t))$ of Laurent series with valuation ring $k[[t]]$ and $\nu_t(t)=1$. Let $X$ be an irreducible $G$-variety. For any $x(t)\in X(k((t)))$ there is a corresponding generalised valuation $\eta_{x(t)}=(Y,\eta_{x(t)})$ of $X$, where $Y$ is the irreducible closed subvariety of $X$ whose generic point is the image point of the morphism of schemes $\Spec(k((t)))\to X$ corresponding to $x(t)$, and $\eta_{x(t)}(f):=\nu_t(f(x(t)))$. We denote by $\nu_{x(t)}$ the $G$-invariant generalised valuation associated to $\eta_{x(t)}$ by Lemma~\ref{lem.invval}. Note that any morphism of varieties $k^\times\to X$ determines a point in $X(k((t)))$. In this case $Y$ is the closure of the image of this morphism.

In the remainder of this paper $G$ will be connected reductive and $H$ will be a closed subgroup scheme of $G$. We remind the reader that, as a topological space, $G/H$ (see \cite[III \S3]{DG} or \cite[I.5.6]{Jan}) can be identified with $G/H(k)$. More precisely, the canonical morphism $G/H(k)\to G/H$ is a homeomorphism. We denote the coset $H(k)\in G/H$ by $x$. The set of cocharacters of $G$ is denoted by $Y(G)$. If $\lambda\in Y(G)$, then we denote the generalised valuation of $G/H$ determined by $t\mapsto\lambda(t)\cdot x:k^\times\to G/H$ by $\eta_\lambda$. The corresponding $G$-invariant generalised valuation is an ordinary valuation and is denoted by $\nu_\lambda$.

\begin{rem}
Our definition of $\nu_\lambda$ is a simplified version of that in \cite{LV} and equivalent to the definition in \cite[Sect.~4.1]{Bri2}. The advantage of our definition is that one can compute $\nu_\lambda$ on a big class of rational functions. The valuations $\nu_\lambda$ in \cite{Rit1} (in the special case of $G$-embeddings) are not $G\times G$-invariant. Also not when $\lambda$ is dominant or anti-dominant, as one can see by checking the case $\GL_2$.
\end{rem}

By a {\it $G/H$-embedding} we will always mean a normal irreducible $G$-variety in which $G/H$ embeds $G$-equivariantly as a $G$-stable open subset. The irreducible components of the complement of $G/H$ in a $G/H$-embedding $X$ are called the {\it boundary divisors} or {\it $G$-stable prime divisors} of $X$. A generalised valuation of $G/H$ is, of course, also a generalised valuation of any $G/H$-embedding. If $X$ is a $G/H$-embedding, $\lambda\in Y(G)$ and $\lim_{t\to 0}\lambda(t)\cdot x$ exists and equals $y\in X$, then $\eta_\lambda(f)$ is defined and $\ge0$ for all $f\in\mc O_{X,y}$ and it is $>0$ for all $f\in\mf m_{X,y}$.

We now fix some standard notation which we will use throughout the paper. The group $B$ is always a Borel subgroup of $G$ and $T$ is a maximal torus of $B$. The unipotent radical of $B$ is denoted by $U$. We denote the character and cocharacter group of $T$ by $X(T)$ and $Y(T)$. For induced modules and line bundles we will follow the notation in \cite{Jan}, except that we denote the $B$ and $B^+$ from \cite{Jan} by $B^-$ and $B$ and similar for their unipotent radicals. We will use similar notation and conventions for the reductive group $\mc G$ in Section~\ref{s.induction}. For the general theory of reductive groups and their representations we refer to \cite{Bo} and \cite{Jan}. The sheaf $U\mapsto\{f\in k(X)^\times\,|\,((f)+D)|_U\ge0\}\cup\{0\}$ corresponding to a divisor $D$ on a normal irreducible variety $X$ is denoted by $\mc O_X(D)$ and, in case $X$ is smooth, the corresponding line bundle is denoted by $\mc L_X(D)$. If $X$ is smooth, then the canonical bundle and its sheaf of sections are both denoted by $\omega_X$. The canonical sheaf $\omega_X$ of a normal irreducible variety $X$ is defined as the push-forward of the canonical sheaf of the smooth locus of $X$, see \cite[Rem.~1.3.12]{BriKu}. It is of the form $\mc O_X(K_X)$ for a Weil divisor, called the canonical divisor, $K_X$ on $X$ which is determined up to linear equivalence.


In the remainder of this paper we assume that $G/H$ is spherical with $B\cdot x$ open in $G/H$. For the general theory of spherical varieties we refer to \cite{Bri2}, \cite{Knop} and \cite{BriI}. A $G/H$-embedding is called {\it simple} if has a unique closed $G$-orbit. Simple embeddings are described by a single coloured cone. A $G/H$-embedding is called {\it toroidal} if its coloured fan has no colours, that is, if no $B$-stable prime divisor which intersects $G/H$ (i.e. which is not $G$-stable) contains a $G$-orbit. An {\it elementary embedding} is a simple toroidal embeddings whose cone is a half ray in the valuation cone of $G/H$. For a toroidal $G/H$-embedding $X$ we denote by $X_0$ the complement in $X$ of the union of the $B$-stable prime divisors that intersect $G/H$. Note that every $G$-orbit of $X$ intersects $X_0$. If $X$ is simple, then $X_0$ is affine by \cite[Thm.~2.1]{Knop}. We denote the closure of $T\cdot x$ in $X_0$ by $\ov{T\cdot x}^0$ (so it has to be clear what $X$ is). We will say that the {\it local structure theorem} holds for $X$ (relative to $x$, $T$ and $B$) if there exists a parabolic subgroup $Q$ of $G$ containing $B$ such that, with $M$ the Levi subgroup of $Q$ containing $T$, we have

\smallskip
The variety $\ov{T\cdot x}^0$ is $M$-stable, the derived group $DM$ acts trivially on it, and the action of $G$ induces an isomorphism $R_u(Q)\times \ov{T\cdot x}^0\stackrel{\sim}{\to}X_0$.
\smallskip

Note that the $T$-orbit of $x$ is the same as the orbit of $x$ under the action of the connected centre of $M$.
Note, furthermore, that the local structure theorem for a toroidal embedding of $G/H$ implies the local structure theorem for $G/H$ itself. This determines $Q$ uniquely, because $M$ is the biggest Levi subgroup in $G$ with $T\subseteq M\subseteq H$. Actually $Q$ can be characterised as the stabiliser of the open $B$-orbit $B\cdot x$ or as the stabiliser of $BH(k)$ (for the action by left multiplication), but we will not make use of this. 

\begin{prop}[{\cite[Prop.~4.1]{Bri2}}]\label{prop.limit}
Let $X$ be a $G/H$-embedding, let $Y$ be a boundary divisor and let $\nu$ be the corresponding valuation of $k(G/H)$.
\begin{enumerate}[{\rm(i)}]
\item If $\lambda\in Y(T)$ is such that $y=\lim_{t\to 0}\lambda(t)\cdot x$ exists and lies in the open $B$-orbit of $Y$, then $m\nu=\nu_\lambda$ for some $m\ge1$ and the image of $m\nu$ in the valuation cone is the image of $-\lambda$.
\item If the local structure theorem holds for the elementary $G/H$-embedding associated to $\nu$, then there exists a $\lambda$ as in (i).
\end{enumerate}
\end{prop}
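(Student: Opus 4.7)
For part (i), the plan is first to identify $\eta_\lambda$ with $m\nu$ near $y$, extend this identity to $k(X)$, and then transfer it to $\nu_\lambda$ via Lemma~\ref{lem.invval}. Since the smooth locus of $X$ is $G$-stable and meets $Y$ in a nonempty open set, it contains the open $B$-orbit of $Y$; hence $y$ is a smooth point of $X$ lying only on $Y$. Pick a local equation $\pi\in\mc O_{X,y}$ for $Y$ and set $m:=\nu_t(\pi(\lambda(t)\cd x))$. The curve $C:=\ov{\{\lambda(t)\cd x\}}$ passes through $y\in Y$ but meets $G/H$, so it is not contained in $Y$ and $1\le m<\infty$. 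For any $f\in\mc O_{X,y}$, writing $f=\pi^{\nu(f)}u$ with $u\in\mc O_{X,y}^\times$ and observing that $u(\lambda(t)\cd x)\to u(y)\ne 0$ gives $\eta_\lambda(f)=m\cd\nu(f)$; passing to quotients inside $k(X)$ extends this identity to $\eta_\lambda=m\nu$ on all rational functions not identically zero on $C$. Now for any $f\in k(X)$, Lemma~\ref{lem.invval}(2) furnishes a nonempty open $U_f\subseteq G$ with $\nu_\lambda(f)=\eta_\lambda(f^g)$ for $g\in U_f$, and combining this with $\eta_\lambda(f^g)=m\nu(f^g)=m\nu(f)$ (using $G$-invariance of $\nu$) yields $\nu_\lambda=m\nu$.

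For the image in the valuation cone, take a nonzero $f_\chi\in k(G/H)^{(B)}_\chi$. Since $\lambda(t)\in T\subseteq B$, the identity $f_\chi(bz)=\chi(b)^{-1}f_\chi(z)$ gives $f_\chi(\lambda(t)\cd x)=t^{-\la\lambda,\chi\ra}f_\chi(x)$, and so $\eta_\lambda(f_\chi)=\la-\lambda,\chi\ra$. Combined with $\eta_\lambda=m\nu$, this reads $m\nu(f_\chi)=\la-\lambda,\chi\ra$, so the image of $m\nu=\nu_\lambda$ in $\mc N_{\mb Q}$ is that of $-\lambda$.

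For part (ii), let $X'$ be the elementary $G/H$-embedding associated to $\nu$. The local structure theorem supplies a parabolic $Q\supseteq B$ with Levi $M\supseteq T$ and an isomorphism $\phi: R_u(Q)\times\ov{T\cd x}^0\stackrel{\sim}{\to} X'_0$, $(r,z)\mapsto rz$. Because $DM$ acts trivially on $\ov{T\cd x}^0$, the latter is an affine toric variety for $\bar T:=T/(T\cap DM)$, and $Y\cap X'_0=R_u(Q)\cd Y'$, where $Y':=Y\cap\ov{T\cd x}^0$ is the codimension-one $\bar T$-orbit closure whose ray in the fan is the image of $\nu$. Standard toric theory provides a cocharacter $\bar\lambda\in Y(\bar T)$ in the relative interior of this ray with $\lim_{t\to 0}\bar\lambda(t)\cd x$ in the open $\bar T$-orbit of $Y'$. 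Lifting $\bar\lambda$ to $\lambda\in Y(T)$ and using that $\lambda(t)\in T$ centralises $1\in R_u(Q)$ by conjugation, $\lim_{t\to 0}\lambda(t)\cd x=\phi(1,\lim_{t\to 0}\bar\lambda(t)\cd x)$ exists in $X'$. Writing $B=T\cd(U\cap M)\cd R_u(Q)$ with $U\cap M\subseteq DM$ acting trivially on $\ov{T\cd x}^0$, one sees that the $B$-orbit of $\phi(1,z)$ equals $\phi(R_u(Q)\times T\cd z)$, so for $z$ in the open $\bar T$-orbit of $Y'$ this orbit is exactly the open $B$-orbit of $Y\cap X'_0$.

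The main obstacle is part (i): establishing cleanly that the local equality $\eta_\lambda=m\nu$ at $y$ propagates through Lemma~\ref{lem.invval} to a global equality $\nu_\lambda=m\nu$ on $k(X)$, without convention slips on $B$-semi-invariants; the valuation-cone statement is then a one-line consequence. In (ii) the only delicate point is the coordinate identity $\lambda(t)\cd\phi(1,z)=\phi(1,\lambda(t)z)$, but this is immediate from the product structure, and the rest reduces to elementary toric geometry.
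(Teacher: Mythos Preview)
Your argument for (ii) is fine and matches the paper's (which just says ``the assertion follows from the theory of toric varieties''). The problem is in (i), specifically the factorisation step.

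You write ``$f=\pi^{\nu(f)}u$ with $u\in\mc O_{X,y}^\times$'' for an arbitrary $f\in\mc O_{X,y}$. This would be correct if $\mc O_{X,y}$ were a discrete valuation ring with uniformiser $\pi$, but $y$ is a smooth \emph{closed} point of $X$, so $\mc O_{X,y}$ is a regular local ring of dimension $\dim X$, not a DVR. In the UFD $\mc O_{X,y}$ you can only write $f=\pi^{\nu(f)}u$ with $u\in\mc O_{X,y}$ not divisible by $\pi$; there is no reason for $u(y)\ne 0$. Concretely, if $f$ vanishes at $y$ without vanishing along $Y$ (say $f$ lies in some other prime of $\mc O_{X,y}$), then $\nu(f)=0$ but $\eta_\lambda(f)>0$. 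So your identity $\eta_\lambda=m\nu$ is simply false in general, and the chain $\nu_\lambda(f)=\eta_\lambda(f^g)=m\nu(f^g)=m\nu(f)$ breaks at the second equality. The valuation-cone statement then also collapses, since you deduce it from the same false identity.

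The paper avoids this by never asserting $\eta_\lambda=m\nu$. Instead it argues directly with $\nu_\lambda$: for $f\in\mc O_{X,Y}$ one has $g\cdot y\in{\rm Dom}(f)$ for $g$ in a nonempty open set, hence $\eta_\lambda(f^g)\ge0$ there, so $\nu_\lambda(f)\ge0$ by Lemma~\ref{lem.invval}; likewise $\nu_\lambda>0$ on $\mf m_{X,Y}$. Since $\mc O_{X,Y}$ is a DVR, any valuation dominating it is a positive integer multiple of $\nu$, giving $\nu_\lambda=m\nu$. For the image in the valuation cone the paper again works through $\mc O_{X,Y}$: if $\nu(f_\chi)\ge0$ then $f_\chi\in\mc O_{X,Y}$, and being a $B$-semi-invariant it is actually defined at the point $y$ in the open $B$-orbit of $Y$, whence $\eta_\lambda(f_\chi)=-\la\chi,\lambda\ra\ge0$; one then concludes as in \cite{Bri2}. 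Your direct computation $\eta_\lambda(f_\chi)=-\la\chi,\lambda\ra$ is correct and useful, but to turn it into a statement about $m\nu$ you must link $\eta_\lambda$ to $\nu_\lambda$ on semi-invariants without passing through the false global identity.
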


\begin{proof}
(i).\ We have that $\lim_{t\to 0}g\lambda(t)\cdot x$ exists and equals $g\cdot y\in Y$ for all $g\in G$. So if $f\in\mc O_{X,Y}$, then $\eta_\lambda(f^g)\ge0$ for all $g$ in the nonempty open subset $U_f=\{g\in G\,|\,g\cdot y\in {\rm Dom}(f)\}$ of $G$. So $\nu_\lambda\ge0$ on $\mc O_{X,Y}$, by Lemma~\ref{lem.invval}. Similarly, $\nu_\lambda>0$ on $\mf m_{X,Y}$. As in \cite{Bri2} we conclude that $\nu_\lambda=m\nu$ for some $m\ge1$. Let $\chi\in X(T)$ be the weight of a nonzero $B$-semi-invariant $f\in k(G/H)$. Then $f(\lambda(t)\cdot x)=t^{-\la\chi,\lambda\ra}f(x)$. Now assume $\nu(f)\ge0$. Then $f\in\mc O_\nu=\mc O_{X,Y}$. Since $f$ is a $B$-semi-invariant we get that $f$ is defined at $y$. So we must have $\la\chi,\lambda\ra\le 0$. As in \cite{Bri2} we conclude that the image of $m\nu$ in the valuation cone is the image of $-\lambda$.\\
(ii).\ Define $X_{Y,B}\subseteq X$ as in \cite[Sect.~2.2]{Bri2} 
or \cite[Sect.~2]{Knop} (there denoted $X_0$). Then the union of the $G$-conjugates of $X_{Y,B}$ is an open subembedding of $X$ which is the elementary embedding corresponding to $\nu$. So the assertion follows from the theory of toric varieties.
\end{proof}

Note that it follows from Proposition~\ref{prop.limit} that, if the local structure theorem holds for the elementary $G/H$-embeddings, all $G$-invariant valuations are of the form $\nu_\lambda$.

\begin{rem}\label{rem.danger}
The orbit $T\cdot x$ has a unique structure of a torus such that the $T$-orbit map is a morphism of tori. The $T$-weight of a character $\chi\in X(T\cdot x)\subseteq k(G/H)^U$ is the negative of its image in $X(T)$ when composing with the orbit map. We will identify ${\mb Q}\ot X(T\cdot x)$ with the space of weights by identifying $\chi\in X(T\cdot x)$ with the $T$-weight of $\chi$ (or of its $U$-invariant extension to $G/H$). Of course this also yields an identification of $\mb Q\ot Y(T\cdot x)$ with the dual of the space of weights. The image of $\lambda\in Y(T)$ in the dual of the space of weights is then the negative of the composite of $\lambda$ with the orbit map.
\end{rem}

In the case of the $G\times G$-space $G$, $G\times G$ acting by $(g_1,g_2)\cdot h=g_1hg_2^{-1}$, we take the unit of $G$ as the base point $x$, $B^-\times B$ as Borel subgroup and $T\times T$ as maximal torus. Then $(T\times T)\cdot x=T$. Furthermore, by \cite[Prop.~6.2.4]{BriKu}, the local structure theorem holds for all toroidal $G$-embeddings and the parabolic $Q$ is $B^-\times B$. Furthermore the weight of $\chi\in X(T)\subseteq k(G)^{U^-\times U}$ is $(-\chi,\chi)$. 
For $\lambda\in Y(G)$, we define $\nu_\lambda=\nu_{(\lambda,0)}$, a $G\times G$-invariant valuation on $G$. Note that it follows  immediately from the definition of $\nu_\lambda$ that $\nu_\lambda=\nu_{w(\lambda)}$ for $\lambda\in Y(T)$ and any $w$ in the Weyl group. From this, \cite[Cor.~5.3]{Knop} and Proposition~\ref{prop.limit} the reader can easily deduce the following corollary. Alternatively, one can deduce it as in \cite[Ex.~4.1]{Bri2}. 
\begin{cornn}[{cf. \cite[Prop.~2]{V}, \cite[Ex.~4.1]{Bri2}, \cite[Prop.~9]{Rit1}}]\
\begin{enumerate}[{\rm(i)}]
\item Let $\lambda\in Y(T)$, let $X$ be the elementary $G$-embedding corresponding to $\nu_\lambda$ and let $Y$ be the closed $G$-orbit in $X$. Then $\lim_{t\to 0}\lambda(t)$ exists and lies in the open $B^-\times B$-orbit of $Y$ if and only if $\lambda$ is anti-dominant. Furthermore, the image of $\nu_\lambda$ in $\mb Q\ot Y(T)$ is the anti-dominant Weyl group conjugate of $\lambda$.
\item The valuation cone of $G$ is the anti-dominant Weyl chamber in $\mb Q\ot Y(T)$.
\end{enumerate}
\end{cornn}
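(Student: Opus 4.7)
The plan is to apply Proposition~\ref{prop.limit} to $G$ viewed as a $G\times G$-space, with $B^-\times B$ the Borel and $T\times T$ the maximal torus. By \cite[Prop.~6.2.4]{BriKu} the local structure theorem holds for every toroidal $G\times G$-embedding of $G$, so Proposition~\ref{prop.limit}(ii) applies in particular to each elementary embedding. Two book-keeping facts carry the proof. First, Remark~\ref{rem.danger} together with the orbit map $(s,t)\mapsto st^{-1}\colon T\times T\to T\cd e$ identifies the image of $-(\lambda,0)\in Y(T\times T)$ in $\mb Q\ot Y(T)$ with $\lambda$. Second, for any $\mu=(\mu_1,\mu_2)\in Y(T\times T)$ the curves $t\mapsto\mu(t)\cd e=\mu_1(t)\mu_2(t)^{-1}$ and $t\mapsto(\mu_1-\mu_2)(t)\cd e$ coincide, so $\nu_\mu=\nu_{\mu_1-\mu_2}$.

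For the forward direction of (i), Proposition~\ref{prop.limit}(i) applied to $(\lambda,0)$ gives $\nu_\lambda=m\nu$ for some $m\ge 1$ and places the image of $\nu_\lambda$ in the valuation cone at $\lambda$. Invoking \cite[Cor.~5.3]{Knop}, the valuation cone of a spherical variety is a fundamental domain for the little Weyl group; for $G$ as a $G\times G$-variety this little Weyl group is the full $W$, so the cone sits inside the anti-dominant chamber and $\lambda$ must be anti-dominant.

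For the reverse direction, suppose $\lambda$ is anti-dominant and nonzero. Proposition~\ref{prop.limit}(ii) produces $\mu=(\mu_1,\mu_2)$ with $\lim_{t\to 0}\mu(t)\cd e$ lying in the open $B^-\times B$-orbit of $Y$ inside the elementary embedding of $\nu_\lambda$. Combining $\nu_\mu=\nu_{\mu_1-\mu_2}$ with the forward direction applied to $\mu_1-\mu_2$ shows that $\mu_1-\mu_2$ is anti-dominant with image $\mu_1-\mu_2$ in the valuation cone. Since $\nu_{\mu_1-\mu_2}$ and $\nu_\lambda$ are positive rational multiples of the same boundary valuation and both $\lambda$ and $\mu_1-\mu_2$ are anti-dominant, one obtains $\mu_1-\mu_2=r\lambda$ for some $r=a/b\in\mb Q_{>0}$. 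Then $a\lambda=b(\mu_1-\mu_2)$ as honest cocharacters, so after the substitution $t\mapsto t^b$ the curves for $\lambda$ and $\mu$ agree and $\lim_{t\to 0}\lambda(t)$ lies in the open $B^-\times B$-orbit of $Y$. Part (ii) follows at once: Proposition~\ref{prop.limit}(ii) together with $\nu_\mu=\nu_{\mu_1-\mu_2}$ writes every $G\times G$-invariant valuation as $\nu_\lambda$ for some $\lambda\in Y(T)$, and the Weyl-invariance $\nu_\lambda=\nu_{w\lambda}$ combined with (i) identifies the valuation cone with the anti-dominant Weyl chamber.

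The bulk of the work will be keeping the sign and orbit-map conventions of Remark~\ref{rem.danger} straight; the one genuinely delicate point is replacing the cocharacter $\mu\in Y(T\times T)$ delivered by Proposition~\ref{prop.limit}(ii) by the prescribed $\lambda\in Y(T)$, for which the identity $\nu_\mu=\nu_{\mu_1-\mu_2}$ and the reparametrisation $t\mapsto t^b$ replace what would otherwise be an explicit analysis inside the toric slice $\ov{T\cd e}^0$.
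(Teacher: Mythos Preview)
Your argument has a real gap in each direction of (i), and both come from postponing the Weyl invariance $\nu_\lambda=\nu_{w\lambda}$ (which the paper singles out just before the corollary) until it is too late.

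In the forward direction you invoke \cite[Cor.~5.3]{Knop} in the form ``the valuation cone is a fundamental domain for the little Weyl group, and for the $G\times G$-space $G$ the little Weyl group is the full $W$''. The second clause is not something Knop's corollary hands you; knowing that the little Weyl group equals $W$ is essentially (ii), so appealing to it here is circular. The paper uses Knop only in the weak form that the anti-dominant chamber is contained in the valuation cone. It then argues: if $\lambda$ lies in the valuation cone, the elementary embedding with fan the ray through $\lambda$ exists, and by the local structure theorem together with toric theory $\lim_{t\to 0}\lambda(t)$ lands in $X_0$, hence in the open $B^-\times B$-orbit of $Y$; Proposition~\ref{prop.limit}(i) then gives that the image of $\nu_\lambda$ is $\lambda$. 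Applying this both to $\lambda$ and to its anti-dominant conjugate $\mu$ and using $\nu_\lambda=\nu_\mu$ forces $\lambda=\mu$, so the valuation cone is exactly the anti-dominant chamber.

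In the reverse direction the step ``$\mu_1-\mu_2=r\lambda$'' is not justified. From the fact that $\nu_\lambda$ and $\nu_{\mu_1-\mu_2}$ are positive multiples of the same boundary valuation you only conclude that the \emph{image} of $\nu_\lambda$ lies on the ray through $\mu_1-\mu_2$; to put $\lambda$ itself on that ray you would need to know that the image of $\nu_\lambda$ is $\lambda$, and that is precisely what you are trying to prove. Again the paper's route avoids this: once the anti-dominant chamber is known to sit in the valuation cone, for anti-dominant $\lambda$ the toric slice $\ov{T\cdot x}^0$ of the elementary embedding with cone the ray through $\lambda$ already shows $\lim_{t\to 0}\lambda(t)$ exists and lies in the open $B^-\times B$-orbit, with no need for the reparametrisation trick.
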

\begin{rem}\label{rem.wful}
In \cite{BriKu}, \cite{Rit2} and \cite{Str} there is a sign mistake in the statement of the local structure theorem for the wonderful compactification of the adjoint group $G_{\rm ad}$. Let $\alpha_1,\ldots,\alpha_r$ be the simple roots of $G$. If we form ${\bf X}_0$ according to $B^-\times B$ (not $B\times B^-$ as claimed in these sources), then $\ov{T\cdot x}^0$ is isomorphic to $\mb A^r$ via $t\mapsto(\alpha_1^{-1}(t),\ldots,\alpha_r^{-1}(t))$. If we choose $B\times B^-$ as a Borel subgroup, then the isomorphism $\ov{T\cdot x}^0\cong\mb A^r$ is given by $t\mapsto(\alpha_1(t),\ldots,\alpha_r(t))$. In \cite[Ch.~6]{BriKu} the cause is that the action of $G\times G$ on $\End(M)\cong M^*\ot M$ is given by letting the left copy of $G$ act on $M^*$ and the right copy on $M$. This amounts to the action $(g_1,g_2)\cdot \varphi=g_2\varphi g_1^{-1}$ (composition of endomorphisms of $M$). So the embedding $G_{\rm ad}\hookrightarrow\bf X$ would only be equivariant if we would define the action of $G\times G$ on $G_{\rm ad}$ by the analogous formula. This doesn't really affect the other results in \cite[Ch.~6]{BriKu}; things can be corrected by swapping $B$ and $B^-$ at the appropriate places.
This may be related to the claim in \cite{Rit1} and \cite{Bri2} that the valuation cone of $G$ is the anti-dominant cone when we use $B\times B^-$ as Borel subgroup. By the above corollary, this is the case when we use $B^-\times B$ as Borel subgroup. The latter difference could also have been caused by another convention to identify $\mb Q\ot X(T)$ with the space of weights, see Remark~\ref{rem.danger}.
\end{rem}

Recall that a {\it resolution (of singularities)} of an irreducible variety $X$ is a smooth irreducible variety $\tilde X$ together with a proper birational morphism $\varphi:\tilde X\to X$. Note that if $X$ is normal we have $\varphi_*(\mc O_{\tilde X})=\mc O_X$, see e.g. \cite[II.14.5]{Jan} (the projectivity condition can be weakened to properness, see \cite[III.3.2.1]{Gro2}). 
As in \cite{BriKu}, we call a line bundle {\it semi-ample} if some positive power of it is generated by its global sections. As in \cite[Sect.~3.3]{BriKu} (following Kempf) we define a morphism $f:X\to Y$ of varieties to be {\it rational} if the direct image under $f$ of the structure sheaf of $X$ is that of $Y$ and if the higher direct images are zero, that is, if $f_*(\mc O_X)=\mc O_Y$ and $R^if_*(\mc O_X)=0$ for $i>0$. The lemma below can be proved with the arguments from Lem.~6.2.8 and Cor.~6.2.8 in \cite{BriKu}.

\vfill\eject

\begin{lem}[{\cite[Lem.~6.2.8]{BriKu}}]\label{lem.vanishing}\
\begin{enumerate}[{\rm(i)}]
\item If $X$ is a smooth projective $G/H$-embedding which is Frobenius split, compatible with the complement of the open $B$-orbit, then $H^i(X,\mc L)=0$ for any $i>0$ and any semi-ample line bundle $\mc L$ on $X$.
\item If $X$ is as in (i) and $f:X\to Y$ is a morphism of projective varieties, then $R^if_*(\mc O_X)=0$ for all $i>0$.
\item If all toroidal $G/H$-embeddings are Frobenius split, compatible with the complement of the open $B$-orbit, then any morphism $\varphi:\tilde X\to X$ of $G/H$-embeddings which is a resolution with $\tilde X$ toroidal and quasi-projective, is a rational morphism.
\end{enumerate}
\end{lem}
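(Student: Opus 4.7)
For (i), the plan is to invoke the Mehta--Van der Kallen vanishing theorem \cite{MvdK}. In a smooth projective $G/H$-embedding the open $B$-orbit $B\cd x$ is affine (since $G/H$ is spherical), so its complement $D\subseteq X$ is the support of an ample effective divisor --- a standard fact about spherical projective varieties (the complement of the big cell is ample). Then for any semi-ample $\mc L$ the twist $\mc L\ot\mc O_X(nD)$ is ample for $n\gg 0$ and has vanishing higher cohomology by Serre/Kodaira-type vanishing on Frobenius split varieties; since the splitting of $X$ is compatible with $D$, Mehta--Van der Kallen allows one to descend this vanishing from $\mc L\ot\mc O_X(nD)$ to $\mc L$ itself.

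For (ii), I would choose an ample line bundle $\mc A$ on $Y$, so that $f^*\mc A$ is semi-ample on $X$. Part (i) gives $H^i(X,f^*(\mc A^n))=0$ for all $i>0$ and $n\ge 1$. Combining the projection formula $R^if_*(\mc O_X)\ot\mc A^n\cong R^if_*(f^*\mc A^n)$ with the Leray spectral sequence yields $H^0(Y,R^if_*(\mc O_X)\ot\mc A^n)=0$ for such $i,n$. For $n\gg 0$ the coherent sheaf $R^if_*(\mc O_X)\ot\mc A^n$ is generated by its global sections, so it must vanish, and hence $R^if_*(\mc O_X)=0$.

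For (iii), first note that $\varphi$ is proper and birational with $X$ normal, so $\varphi_*(\mc O_{\tilde X})=\mc O_X$ automatically; it remains to show $R^i\varphi_*(\mc O_{\tilde X})=0$ for $i>0$. Since higher direct images commute with restriction to open subsets of $X$, the question is local on $X$, and I would reduce to part (ii) by a compactification argument. Using the colored-fan machinery of spherical embeddings, I would first choose any proper $G/H$-embedding $\ov X$ containing $X$ as an open subvariety, and then extend the fan of $\tilde X$ by adding and subdividing cones (outside the cones already coming from $\tilde X$) to obtain a smooth projective toroidal $G/H$-embedding $\ov X'$ containing $\tilde X$ with a proper morphism $\ov\varphi:\ov X'\to\ov X$ extending $\varphi$ and satisfying $\ov\varphi^{-1}(X)=\tilde X$. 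By hypothesis $\ov X'$ is Frobenius split compatibly with the complement of its open $B$-orbit, so (ii) applied to $\ov\varphi$ gives $R^i\ov\varphi_*(\mc O_{\ov X'})=0$ for $i>0$, and flat base change along the open immersion $X\hookrightarrow\ov X$ yields $R^i\varphi_*(\mc O_{\tilde X})=0$.

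The main obstacle I anticipate is the compactification step in (iii): producing simultaneously a smooth projective toroidal completion of $\tilde X$ and a proper completion of $X$ dominated by it in such a way that the preimage of $X$ is exactly $\tilde X$. Combinatorially, this amounts to extending the colored fan of $\tilde X$ to a complete smooth projective fan refining a chosen completion of the fan of $X$, without introducing new cones that map into the fan of $X$. This is a standard but delicate manipulation of colored fans; parts (i) and (ii), by contrast, are rather mechanical applications of Mehta--Van der Kallen combined with the projection formula.
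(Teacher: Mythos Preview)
Your treatments of (i) and (ii) are essentially the standard arguments from \cite[Lem.~6.2.8]{BriKu}, which is exactly what the paper invokes.

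For (iii) your route differs from the paper's. Two remarks on your argument first. The condition $\ov\varphi^{-1}(X)=\tilde X$ that you flag as the main obstacle is in fact automatic: once $\ov\varphi$ extends $\varphi$, properness of the original $\varphi:\tilde X\to X$ together with the valuative criterion forces every point of $\ov X'$ mapping into $X$ to lie in $\tilde X$. What is \emph{not} automatic is (a) that $\ov X$ can be taken projective (you wrote ``proper'', but (ii) needs both source and target projective; this is where the Sumihiro reduction to quasi-projective $X$ enters), and more seriously (b) that $\tilde X$ admits a smooth projective toroidal completion $\ov X'$ without subdividing the cones already in the fan of $\tilde X$. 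The latter is plausible since $\tilde X$ is already smooth and quasi-projective, but it is a genuinely delicate fan manipulation and not something one can simply assert.

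The paper's argument (following \cite[Cor.~6.2.8]{BriKu}) sidesteps (b) entirely. After reducing to quasi-projective $X$ via Sumihiro, one embeds $X$ and $\tilde X$ as open subsets of projective $G/H$-embeddings $Y$ and $\tilde Y$, and takes $Z$ to be the normalisation of the closure of the graph of $\varphi$ in $\tilde Y\times Y$. Then $Z$ is a projective $G/H$-embedding containing $\tilde X$ as an open subset, with a morphism $\varphi:Z\to Y$ extending the original one. Now one resolves $Z$ by \emph{any} smooth projective toroidal $\psi:\tilde Z\to Z$ --- with no requirement that $\tilde X$ sit inside $\tilde Z$. By (ii), $R^j\psi_*\mc O_{\tilde Z}=0$ for $j>0$, so the Grothendieck spectral sequence collapses and gives $R^i\varphi_*\mc O_Z=R^i(\varphi\circ\psi)_*\mc O_{\tilde Z}$, which vanishes for $i>0$ again by (ii). Restricting to $X\subseteq Y$ (using the automatic $\varphi^{-1}(X)=\tilde X$) gives the result. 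The price is an extra resolution and a spectral sequence; the gain is that one never has to produce a smooth projective toroidal \emph{completion} of $\tilde X$, only a smooth projective toroidal resolution of the auxiliary variety $Z$, which always exists.
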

\begin{rem}
If $\varphi:\tilde X\to X$ is a resolution as in (iii) above, then $\varphi$ is a proper and quasi-projective morphism ($\tilde X$ is quasi-projective) and therefore, by \cite[Thm.~5.5.3]{Gro}, it is a projective morphism in the sense of \cite{Gro}. A projective morphism in the sense of \cite{Gro} to a quasi-projective variety is also projective in the sense of \cite{H2}. See \cite[p.~103 and Prop.~II.7.10]{H2}.
\end{rem}

The next lemma will be needed in the Section~\ref{s.induction}. It can be proved using \cite[Thm~3.3.4]{BriKu}, the Grothendieck spectral sequence for the direct image of the composite $G/B\to G/P\to G/Q$ and $\mc O_{G/B}$, and \cite[Prop.~III.9.3]{H2}.

\begin{lem}\label{lem.flagbundle}
Let $P$ and $Q$ be parabolic subgroups of $G$ with $P\subseteq Q$ and let $X$ be a $Q$-variety. Then the canonical morphism $G\times^PX\to G\times^QX$ is projective and rational.
\end{lem}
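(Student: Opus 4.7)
The plan is to split the lemma into two independent reductions: first prove rationality (and projectivity) of the projection $\psi : G/P \to G/Q$ of partial flag varieties, and then transfer these properties to $\bar\pi : G\times^P X \to G\times^Q X$ by flat base change. Projectivity will come essentially for free, since it is stable under base change.

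First I would deal with $\psi$. Applying \cite[Thm.~3.3.4]{BriKu} to both parabolics $P$ and $Q$ yields rationality of $\varphi: G/B\to G/P$ and of the composite $\psi\circ\varphi: G/B\to G/Q$. The Grothendieck spectral sequence
\[
E_2^{ij} = R^i\psi_*(R^j\varphi_*\mc O_{G/B}) \Longrightarrow R^{i+j}(\psi\circ\varphi)_*\mc O_{G/B}
\]
collapses at $E_2$, because rationality of $\varphi$ forces $R^j\varphi_*\mc O_{G/B}$ to vanish for $j>0$ and to equal $\mc O_{G/P}$ for $j=0$; only the row $j=0$ survives, so $R^i\psi_*\mc O_{G/P}\cong R^i(\psi\circ\varphi)_*\mc O_{G/B}$. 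Rationality of $\psi\circ\varphi$ then gives $\psi_*\mc O_{G/P}=\mc O_{G/Q}$ and $R^i\psi_*\mc O_{G/P}=0$ for $i>0$. The morphism $\psi$ is projective because $G/P\to G/Q$ is locally trivial with projective fibre $Q/P$.

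Next I would identify the square with horizontal projections $\pi_P : [(g,x)]_P \mapsto gP$ and $\pi_Q : [(g,x)]_Q \mapsto gQ$ as Cartesian. The natural morphism $G\times^P X \to (G\times^Q X)\times_{G/Q} G/P$, $[(g,x)]_P\mapsto([(g,x)]_Q, gP)$, has inverse $([(g,x)]_Q, g'P)\mapsto [(g', q^{-1}x)]_P$, where $q\in Q$ is chosen with $g'=gq$; independence of the choice of representatives is a direct check on both sides. Since the principal $Q$-bundle $G\to G/Q$ is locally trivial, $\pi_Q$ is locally a product projection onto $G/Q$ and in particular flat. Flat base change \cite[Prop.~III.9.3]{H2} then yields $R^i\bar\pi_*\mc O_{G\times^P X} = \pi_Q^*(R^i\psi_*\mc O_{G/P})$ for all $i\ge 0$, so the rationality of $\psi$ established above transfers to $\bar\pi$. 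Projectivity of $\bar\pi$ is inherited from $\psi$ by the stability of projective morphisms under base change.

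The main obstacle I anticipate is the scheme-theoretic verification of the Cartesian square together with the precise flatness statement for $\pi_Q$; both are essentially formal once the standard model for induced spaces is in hand, but do require a little care to write out cleanly. The remaining steps are routine manipulations of the three results named in the hint.
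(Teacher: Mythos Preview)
Your proof is correct and follows essentially the same route as the paper: rationality of $G/P\to G/Q$ via the Grothendieck spectral sequence for $G/B\to G/P\to G/Q$ using \cite[Thm.~3.3.4]{BriKu}, then flat base change \cite[Prop.~III.9.3]{H2} along the Cartesian square over $G/Q$. The only cosmetic difference is that the paper obtains projectivity directly from the closed embedding $G\times^PX=G\times^Q(Q/P\times X)\hookrightarrow G\times^Q(G/P\times X)=G/P\times(G\times^QX)$ followed by the second projection, whereas you deduce it by base change from $G/P\to G/Q$.
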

The following lemma is well-known. It was used, for example, in \cite{MT1} and \cite{MT2}. We will need it in Section~\ref{s.Ginduction}. For simplicity we added the assumption that $f$ is smooth.

\begin{lem}[{\cite[Cor.~VII.3.4]{H1}, \cite[p.~49]{KKMS}}]\label{lem.duality}
Let $X$ and $Y$ be smooth varieties and let $f:X\to Y$ be a proper smooth rational morphism of relative dimension $d$. Then
$$R^if_*\omega_X=
\begin{cases}
\omega_Y&\text{if\ }i=d,\\
0&\text{otherwise.}
\end{cases}
$$
\end{lem}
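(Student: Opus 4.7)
The plan is to deduce the lemma as a direct application of Grothendieck/Serre duality for the smooth proper morphism $f$, combined with the rationality assumption $Rf_*\mathcal O_X=\mathcal O_Y$.

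First I would recall that, since $f$ is smooth of relative dimension $d$, the relative dualising sheaf $\omega_{X/Y}$ coincides with the top exterior power $\Omega^d_{X/Y}$ of the locally free sheaf of relative differentials, and there is a canonical isomorphism $\omega_X\cong f^*\omega_Y\otimes\omega_{X/Y}$ (both $X$ and $Y$ are smooth, so this is the usual formula from the relative cotangent sequence). By the projection formula for the locally free sheaf $f^*\omega_Y$, one has
\[
R^if_*\omega_X\;\cong\;R^if_*(\omega_{X/Y})\otimes\omega_Y
\]
for every $i$. So it suffices to establish the lemma in the relative case, i.e.\ to show $R^df_*\omega_{X/Y}\cong\mathcal O_Y$ and $R^if_*\omega_{X/Y}=0$ for $i\ne d$.

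Next, I would invoke Grothendieck duality for the proper smooth morphism $f$: the twisted inverse image functor satisfies $f^!\mathcal G\cong f^*\mathcal G\otimes\omega_{X/Y}[d]$, and for $\mathcal F\in D^b_{\rm coh}(X)$, $\mathcal G\in D^b_{\rm coh}(Y)$ one has
\[
Rf_*R\mathcal{H}om(\mathcal F,f^!\mathcal G)\;\cong\;R\mathcal{H}om(Rf_*\mathcal F,\mathcal G).
\]
Applying this with $\mathcal F=\mathcal O_X$ and $\mathcal G=\mathcal O_Y$ gives
\[
Rf_*\omega_{X/Y}[d]\;\cong\;R\mathcal{H}om(Rf_*\mathcal O_X,\mathcal O_Y).
\]
Now the rationality hypothesis says exactly $Rf_*\mathcal O_X\cong\mathcal O_Y$ in the derived category, so the right-hand side is $R\mathcal{H}om(\mathcal O_Y,\mathcal O_Y)=\mathcal O_Y$. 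Reading off cohomology sheaves yields $R^{d+i}f_*\omega_{X/Y}=0$ for $i\ne 0$ and $R^df_*\omega_{X/Y}\cong\mathcal O_Y$. The vanishing for $i>d$ is of course automatic from the bound on cohomological dimension of a morphism of relative dimension $d$; the nontrivial part is the vanishing for $0\le i<d$, which is precisely where duality combined with rationality does the work.

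Combining this with the projection formula from the first step gives $R^df_*\omega_X\cong\omega_Y$ and $R^if_*\omega_X=0$ for $i\ne d$, as required. The main technical obstacle is the legitimacy of the duality isomorphism in this setting, but since both $X$ and $Y$ are smooth and $f$ is proper and smooth, this is exactly the content of \cite[Cor.~VII.3.4]{H1}, which may be cited directly; alternatively, one can quote the formulation in \cite[p.~49]{KKMS}.
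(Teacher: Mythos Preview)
Your proof is correct and follows essentially the same approach as the paper: apply Grothendieck duality $Rf_*R\mathcal{H}om(\mathcal F,f^!\mathcal G)\cong R\mathcal{H}om(Rf_*\mathcal F,\mathcal G)$ with $\mathcal F=\mathcal O_X$, use the identification $f^!\mathcal G=f^*\mathcal G\otimes\omega_{X/Y}[d]$ for smooth $f$, and invoke rationality $Rf_*\mathcal O_X=\mathcal O_Y$. The only cosmetic difference is that the paper takes $\mathcal G=\omega_Y$ directly, so that $f^!\omega_Y=\omega_X[d]$ and one obtains $Rf_*\omega_X[d]\cong\omega_Y$ in one stroke, without your preliminary reduction to the relative sheaf via the projection formula.
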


\section{Normality of orbit closures}\label{s.normalorbitclosure}
Recall from \cite[Sect.~1.3]{Do} that a {\it good pair of varieties} is a pair $(X,Y)$ of affine varieties with $Y$ a closed subvariety of $X$ such that $k[X]$ and the vanishing ideal of $Y$ in $k[X]$ have a good filtration. In this case the algebra $k[Y]$ also has a good filtration.
\begin{prop}\label{prop.normalorbitclosure}
Let $X$ be a (normal) affine spherical $G$-variety. Let $Y$ be a $G$-orbit closure in $X$. If $(X,Y)$ is a good pair of varieties, then $Y$ is normal.
\end{prop}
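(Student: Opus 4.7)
The goal is to show that the normalisation $\pi\colon\tilde Y\to Y$ is an isomorphism, i.e.\ $k[Y]=k[\tilde Y]$ inside $k(Y)$. The plan has two stages: first identify the $B$-semi-invariants of $k[Y]$ and $k[\tilde Y]$, then use the good filtration of $k[Y]$ to upgrade this to equality of the full $G$-algebras.

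For each affine spherical $G$-variety $Z$ appearing below, write $k[Z]^U_\chi$ for the space of $B$-semi-invariants of $T$-weight $\chi$, and set $\Lambda(Z)=\{\chi:k[Z]^U_\chi\ne 0\}$; sphericality makes each $k[Z]^U_\chi$ at most one-dimensional. The good-pair hypothesis supplies the appropriate cohomology vanishing for $I(Y)$, so the long exact $B$-cohomology sequence of $0\to I(Y)\to k[X]\to k[Y]\to 0$ yields surjections $k[X]^U_\chi\twoheadrightarrow k[Y]^U_\chi$ and simultaneously shows that $k[Y]$ itself has a good filtration. Hence $\Lambda(Y)\subseteq\Lambda(X)$; normality of $X$ makes $\Lambda(X)$ saturated in the ambient weight group, and the one-dimensionality of weight spaces propagates saturation to $\Lambda(Y)$: if $n\chi\in\Lambda(Y)\subseteq\Lambda(X)$ and $f$ generates $k[X]^U_\chi$, then $f^n$ spans the one-dimensional $k[X]^U_{n\chi}$ and maps to a non-zero element of $k[Y]$, so $f|_Y\ne 0$ and $\chi\in\Lambda(Y)$. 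By the standard fact that normalisation of an affine spherical $G$-variety corresponds to saturation of its weight monoid, this forces $\Lambda(\tilde Y)=\Lambda(Y)$ and therefore $k[\tilde Y]^U_\chi=k[Y]^U_\chi$ for every $\chi$.

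Now set $M:=k[\tilde Y]/k[Y]$. The good filtration of $k[Y]$ supplies the relevant $\Ext^1$-vanishing, and combined with the equality of weight spaces just proved the long exact $B$-cohomology sequence of $0\to k[Y]\to k[\tilde Y]\to M\to 0$ forces $M^U_\chi=0$ for every $\chi$. But any non-zero rational $G$-module has a non-zero $U$-fixed vector (by unipotence of $U$), and such a vector has dominant $T$-weight (its $G$-span being a finite-dimensional $G$-module in which it is a highest weight vector). Hence $M=0$, $k[Y]=k[\tilde Y]$, and $Y$ is normal. The main obstacle is this final implication: in positive characteristic a $G$-algebra is not in general recoverable from its $U$-invariants, and the good filtration of $k[Y]$, via $\Ext^1$-vanishing against dominant weights, is precisely the tool that makes the argument work.
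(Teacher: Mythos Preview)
Your overall strategy coincides with the paper's: first show that $k[Y]^U$ is normal (equivalently that the weight monoid $\Lambda(Y)$ is saturated), then use the good filtration of $k[Y]$ to upgrade this to normality of $k[Y]$. For the first step the paper argues geometrically---$\Maxspec(k[Y]^U)$ is a $T$-orbit closure in the normal toric variety $\Maxspec(k[X]^U)$, hence normal---while your direct saturation argument (lifting $f$ from $k[X]^U_\chi$ and observing $f^n|_Y\ne0$) is a correct and pleasant alternative. For the second step the paper simply invokes Grosshans's theorem \cite[Thm.~17]{Gr1}, whereas you attempt to prove the implication by hand via $\Ext^1$-vanishing.

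There is, however, a genuine gap in your passage from ``$\Lambda(Y)$ is saturated'' to ``$k[\tilde Y]^U_\chi=k[Y]^U_\chi$''. You invoke the ``standard fact'' that normalisation of an affine spherical variety corresponds to saturation of its weight monoid, i.e.\ $\Lambda(\tilde Y)=\Lambda(Y)^{\rm sat}$. The inclusion $\Lambda(Y)^{\rm sat}\subseteq\Lambda(\tilde Y)$ is easy, but the reverse inclusion amounts to showing that $k[\tilde Y]^U$ is \emph{integral} over $k[Y]^U$, and this is not automatic in positive characteristic: an element $g\in k[\tilde Y]^U_\chi$ is integral over $k[Y]$, but taking $T$-homogeneous components of the integral equation only gives coefficients in $k[Y]_{(n-i)\chi}$, not in $k[Y]^U_{(n-i)\chi}$. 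In characteristic~$0$ complete reducibility rescues this; in characteristic~$p$ one needs precisely the kind of transfer argument that Grosshans's theorem encapsulates. So your proof, as written, either has a gap or is implicitly circular---the ``standard fact'' you cite is essentially what Grosshans proves using the good filtration hypothesis.

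A minor point: your ``$B$-cohomology'' should really be $\Ext^\bullet_G(\Delta(\chi),-)$. The identification $\Hom_G(\Delta(\chi),V)=V^U_\chi$ for dominant $\chi$ and the vanishing $\Ext^1_G(\Delta(\chi),k[Y])=0$ (from the good filtration) are exactly what you need; genuine $B$-cohomology $H^1(B,k[Y]\otimes k_{-\chi})=H^1(U,k[Y])_\chi$ does \emph{not} vanish in general (already $H^1(U,k)\ne0$). With this correction, your final step---$M^U_\chi=0$ for all dominant $\chi$ forces $M=0$---is correct.
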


\begin{proof}
We follow the arguments in \cite[Prop.~3.5]{BriP} combined with properties of good filtrations. Let $T$ and $U$ be a maximal torus and the unipotent radical of a Borel subgroup of $G$. Since $k[Y]$ has a good filtration, it suffices, by \cite[Thm.~17]{Gr1}, to show that $k[Y]^U$ is normal. Note that it is finitely generated by \cite[Thm.~16.4]{Gr2}. Since $X$ is normal, $k[X]^U$ is normal. Furthermore, since $X$ is spherical, the variety $\Maxspec(k[X]^U)$ is a normal $T$-variety with a dense $T$-orbit. If $I$ is the ideal of functions in $k[X]$ that vanish on $Y$, then $k[Y]^U\cong k[X]^U/I^U$, since $(X,Y)$ is a good pair of varieties. Here we used that the $U$-fixed point functor is exact on short exact sequences of modules with a good filtration (\cite[II.2.13, 4.13]{Jan}). Thus $\Maxspec(k[Y]^U)$ identifies with a $T$-orbit closure in $\Maxspec(k[X]^U)$. So it is normal, by the general theory of toric varieties.
\end{proof}

\begin{cornn}
Let $X$ be a spherical $G$-variety and let $Y$ be a $G$-orbit closure in $X$. If $X$ is $B$-canonically Frobenius split compatible with $Y$, then $Y$ is normal.
\end{cornn}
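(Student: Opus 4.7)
My plan is to reduce the corollary directly to Proposition~\ref{prop.normalorbitclosure} by localising on a suitable cover. Since normality is a local property, it suffices to show, for every point $y\in Y$, that $Y$ is normal at $y$. By Sumihiro's theorem (applicable because $X$ is normal) combined with the standard refinement that a normal $B$-variety admits a covering by $B$-stable affine open subsets (obtained from $B$-stable quasi-projective opens by removing the zero locus of a $B$-semi-invariant section of an ample line bundle), I may replace $X$ by a $B$-stable affine open neighbourhood $U$ of $y$. A $B$-canonical Frobenius splitting of $X$ compatible with $Y$ restricts to a $B$-canonical Frobenius splitting of $U$ compatible with $Y\cap U$. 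Moreover $U$ is still spherical. Thus it is enough to prove the corollary in the case that $X$ is affine.

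Next, I would argue that, for $X$ an affine spherical $G$-variety $B$-canonically split compatibly with the $G$-stable closed subvariety $Y$, the pair $(X,Y)$ is a good pair of varieties in the sense of \cite[Sect.~1.3]{Do}. The key input is Mathieu's theorem (as developed in \cite[Ch.~4]{BriKu}): a $B$-canonical splitting of $k[X]$ ensures that $k[X]$ admits a good filtration as a $G$-module. The compatibility of the splitting with the ideal $I(Y)\subseteq k[X]$ of functions vanishing on $Y$ means that $I(Y)$ is itself $B$-canonically split (via the restriction of the splitting), so the same argument yields a good filtration on $I(Y)$. Hence the short exact sequence
\[
0\to I(Y)\to k[X]\to k[Y]\to 0
\]
is an exact sequence of $G$-modules with good filtrations, which is exactly the condition for $(X,Y)$ to be a good pair.

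With both points in hand, Proposition~\ref{prop.normalorbitclosure} applies and yields normality of $Y\cap U$ at $y$, hence of $Y$ itself.

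The main obstacle I anticipate is the second step, namely rigorously extracting the good filtration on $k[X]$ and on the vanishing ideal from the mere data of a compatible $B$-canonical splitting in the affine (possibly non-finite type issues of coordinate ring?) setting. In practice this is by now standard in the Frobenius splitting literature, but the citation needs to be to the affine (not projective) form of Mathieu's theorem together with the fact that restriction of a $B$-canonical splitting to a compatibly split $B$-stable subscheme is again $B$-canonical; both facts are available in \cite[Ch.~4]{BriKu}, so the obstacle is one of bookkeeping rather than mathematical depth.
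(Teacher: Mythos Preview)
Your reduction in step 1 has a genuine gap. When you replace $X$ by a $B$-stable affine open $U$, you lose the $G$-action: $U$ is only a $B$-variety, so $k[U]$ and the ideal of $Y\cap U$ are only $B$-modules, and $Y\cap U$ is no longer a $G$-orbit closure in any $G$-variety. But Proposition~\ref{prop.normalorbitclosure} requires an affine spherical $G$-variety, and its proof uses crucially that $k[X]$ and $I$ have good filtrations \emph{as $G$-modules}: the exactness of $(-)^U$ is a statement about $G$-modules with good filtration. Likewise, the form of Mathieu's theorem you invoke (\cite[Ex.~4.2.E.2]{BriKu} applied to the trivial bundle) needs a $G$-scheme as input to produce a good filtration by induced $G$-modules. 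On a mere $B$-variety you would at best obtain an excellent filtration by dual Joseph modules, and Proposition~\ref{prop.normalorbitclosure} is not formulated in that language. You cannot repair this by asking for $G$-stable affine opens: spherical varieties in general (e.g.\ projective ones) have none.

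The paper's proof takes the opposite route precisely to preserve the $G$-action. It first reduces, via Sumihiro, to the projective case by normalising the closure of $X$ in a projective $G$-variety; then it passes to the affine cone $\hat X=\Maxspec\bigoplus_m H^0(X,\mc L^m)$ for a suitable ample $G$-linearised $\mc L$. This $\hat X$ is a genuine affine spherical $\hat G=G\times\mb G_m$-variety, the canonical splitting transfers to it by \cite[Lem.~4.1.13]{BriKu} compatibly with $\hat Y$, and then \cite[Ex.~4.2.E.2]{BriKu} yields the good pair condition, so Proposition~\ref{prop.normalorbitclosure} applies to $(\hat X,\hat Y)$. Normality of $\hat Y$ gives projective normality, hence normality, of $Y$.
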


\begin{proof}
We will reduce to the affine case with similar arguments as in 6.2.8 and 6.2.14 in \cite{BriKu}. By a theorem of Sumihiro (\cite{Sum1},\cite{Sum2},\cite{KKLV}) $X$ is covered by $G$-stable open subsets which are $G$-equivariantly isomorphic to locally closed subsets of projective $G$-varieties. So we may assume that $X$ is quasi-projective. Then we may replace $X$ by the normalisation of its closure in a projective $G$-variety and assume that it is projective. Note that the original $X$ will embed as an open subset in this normalization, since it is normal. By another theorem of Sumihiro ({\it loc. cit.}) there exists a very ample $G$-linearised invertible sheaf $\mc L$ on $X$. After replacing $\mc L$ by a suitable power we may assume that $X$ is projectively normal for the corresponding projective embedding (see \cite[Ex.~II.5.13, 14]{H2}). This means that the corresponding affine cone $\hat X:=\Maxspec(R(X,\mc L))$, where $R(X,\mc L)=\bigoplus_{m\ge0}H^0(X,\mc L^m)$, is normal. This is a spherical $\hat G:=G\times\mb G_m$-variety. Furthermore, the restrictions $H^0(X,\mc L^m)\to H^0(Y,\mc L^m)$ are surjective for all $m\ge0$, by \cite[Thm.~1.2.8]{BriKu}. So $\hat Y:=\Maxspec(R(Y,\mc L))$ identifies with an irreducible closed $\hat G$-stable subvariety of $\hat X$, i.e. with a $\hat G$-orbit closure in $\hat X$. The cone $\hat X$ is $\hat B$-canonically Frobenius split by \cite[Lem.~4.1.13]{BriKu} and from the definition of this splitting it follows easily that $\hat Y$ is compatibly split. Now $(\hat X,\hat Y)$ is a good pair of varieties by \cite[Ex.~4.2.E.2]{BriKu} applied to the trivial bundle, so $\hat Y$ is normal by Proposition~\ref{prop.normalorbitclosure}. Thus $Y$ is projectively normal for the projective embedding defined by $\mc L$ and therefore also normal.
\end{proof}

\begin{rem}
The above result applies for example to all (normal) equivariant embeddings of the homogeneous spaces studied in Section~\ref{s.Ginduction}. See Theorem~\ref{thm.Frobsplratres}. In the case of $G\times G$-equivariant embeddings of $G$ much stronger results were obtained by He and Thomsen valid for all $B\times B$-orbit closures, see \cite[Cor.~8.4]{HeTh1}.
\end{rem}

\section{Induction}\label{s.induction}

Throughout this section $\mc G$ is a reductive algebraic group, $\mc P^-$ is a parabolic subgroup of $\mc G$, $\mc H$ is a closed subgroup scheme of $\mc P^-$, $G$ is a reductive group, $H$ is a closed subgroup scheme of $G$ and $\pi:\mc P^-\to G$ is an epimorphism such that
\begin{enumerate}[(\rm{A}1)]
\item $G/H$ is spherical.
\item The isogeny $\mc P^-/\Ker(\pi)\to G$ induced by $\pi$ is central (cf. \cite[V.22]{Bo}).
\end{enumerate}

We will introduce several subgroups of $G$ and $\mc G$ and we encourage the reader to look at Section~\ref{ss.detcirc} to see what they are in some explicit examples. Let $B$ be a Borel subgroup of $G$ such that $BH(k)$ is open in $G$, let $T$ be a maximal torus of $B$ and let $B^-$ be the Borel subgroup of $G$ which is opposite to $B$ relative to $T$. Furthermore, let $\mc T$ be a maximal torus of $\mc P^-$ with $\pi(\mc T)=T$, let $\mc B^-$ be a Borel subgroup of $\mc P^-$ containing $\mc T$ with $\pi(\mc B^-)=B^-$ and let $\mc P$ and $\mc B$ be the parabolic and Borel subgroups of $\mc G$ that are opposite to $\mc P^-$ and $\mc B^-$. Now let $\mc K$ be the Levi subgroup of $\mc P$ containing $\mc T$ and let $\mc L$ be the subgroup of $\mc K$ generated by $\mc T$ and the simple factors of $\mc K$ that do not lie in the kernel of $\pi$. Put $\mc B_{\mc L}=\mc B\cap\mc L$.

\smallskip
\begin{enumerate}[(\rm{A}3)]
\item The scheme theoretic inverse image $\ov{\mc H}$ of $H$ under $\pi$ is generated by $\mc H$ and a closed subgroup scheme of $\mc T$ that normalises $\mc H$.
\end{enumerate}
\smallskip

By (A3) we have a morphism $\mc P^-/\mc H\to G/H$ intertwining the actions of $\mc P^-$ and $G$, and $R_u(\mc P^-)\subseteq\mc H(k)$. Note also that $\mc H$ contains the simple factors of $\mc K$ that lie in the kernel of $\pi$. 
As we will see in the next lemma, $\mc G/\mc H$ is a spherical homogeneous space. We will study its equivariant embeddings. Denote the coset $H(k)\in G/H$ by $y$, denote the coset $\mc H(k)\in \mc P^-/\mc H\subseteq\mc G/\mc H$ by $x$ and denote the coset $\ov{\mc H}(k)\in\mc G/\ov{\mc H}$ by $\ov x$. In this section we will usually identify $\ov x$ with $y$.

\begin{lem}\label{lem.inducedBstabledivisors}
Let $Y$ be a $\mc P^-$-variety with an open $\mc B\cap\mc K$-orbit. Then $\mc  G\times^{\mc P^-}Y$ has an open $\mc B$-orbit. Furthermore, the $\mc B$-stable prime divisors of $\mc  G\times^{\mc P^-}Y$ are the inverse images (or pull-backs) of the $\mc B$-stable prime divisors of $\mc G/\mc P^-$ together with the $\mc B$-stable prime divisors that intersect $Y$. These intersections are the $\mc B\cap\mc K$-stable prime divisors of $Y$.
\end{lem}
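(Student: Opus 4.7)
The plan is to view $\mc X := \mc G\times^{\mc P^-}Y$ as a Zariski-locally trivial fibre bundle $p:\mc X\to\mc G/\mc P^-$, $[g,y]\mapsto g\mc P^-$, with fibre $Y$, and to trivialise $p$ over the open $\mc B$-orbit in the base. Since $\mc B\subseteq\mc P$ we have $\mc B\cap\mc P^-=\mc B\cap\mc K$, and since $R_u(\mc P)\subseteq\mc B$ the product $\mc B\mc P^-\supseteq R_u(\mc P)\mc P^-$ is open in $\mc G$. Hence $\mc B\mc P^-/\mc P^-\cong\mc B/(\mc B\cap\mc K)$ is the open $\mc B$-orbit in $\mc G/\mc P^-$, and $U:=p^{-1}(\mc B\mc P^-/\mc P^-)$ is $\mc B$-equivariantly identified with $\mc B\times^{\mc B\cap\mc K}Y$.

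With this set-up the open $\mc B$-orbit in $\mc X$ is immediate: if $y_0\in Y$ lies in the open $\mc B\cap\mc K$-orbit, then $\mc B\cdot[1,y_0]=\mc B\times^{\mc B\cap\mc K}((\mc B\cap\mc K)\cdot y_0)$ is open in $U$ and hence in $\mc X$ (a dimension count via the orbit-stabiliser formula for the $\mc B\cap\mc K$-action on $Y$ confirms fullness of dimension).

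For the classification of divisors, let $D\subseteq\mc X$ be a $\mc B$-stable prime divisor. If $D\cap U=\emptyset$, then $D$ is contained in $p^{-1}(\mc G/\mc P^-\setminus\mc B\mc P^-/\mc P^-)$, which is the union of the preimages $p^{-1}(Z)$ of the $\mc B$-stable prime divisors $Z$ of $\mc G/\mc P^-$. Zariski-local triviality of $p$ together with irreducibility of the fibre $Y$ ensures that each $p^{-1}(Z)$ is itself an irreducible divisor of $\mc X$, forcing $D=p^{-1}(Z)$ for a unique such $Z$. Otherwise $D\cap U$ is dense open in $D$ (since $D$ is irreducible and $U$ is open) and is a $\mc B$-stable prime divisor of $U\cong\mc B\times^{\mc B\cap\mc K}Y$. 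The standard correspondence $E\mapsto\mc B\times^{\mc B\cap\mc K}E$ then yields a bijection between $\mc B$-stable prime divisors of $U$ and $\mc B\cap\mc K$-stable prime divisors $E$ of $Y$, and identifying $Y$ with the slice $[1,Y]\subseteq U$ one reads off $D\cap Y=E$, proving the last assertion.

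The main obstacle is really the bookkeeping: carefully setting up the identification $U\cong\mc B\times^{\mc B\cap\mc K}Y$ and establishing the divisor correspondence. Once these are secured, everything else is formal, using only that $p$ is a Zariski-locally trivial fibration with irreducible fibre.
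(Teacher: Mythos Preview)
Your proof is correct and follows essentially the same approach as the paper: both trivialise the bundle over the open cell in $\mc G/\mc P^-$ and split the $\mc B$-stable prime divisors according to whether they meet this trivialisation. The only cosmetic difference is that the paper writes the open piece directly as $R_u(\mc P)\times Y$ via $\mc B=R_u(\mc P)(\mc B\cap\mc K)$, whereas you phrase it as $\mc B\times^{\mc B\cap\mc K}Y$; these are the same identification.
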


\begin{proof}
The variety $\mc  G\times^{\mc P^-}Y$ is a locally trivial fibration over $\mc G/\mc P^-$ and $Y$ is the inverse image of the coset $\mc P^-$. In fact the action of $\mc G$ induces an isomorphism $R_u\mc P\times Y\cong\mc PY$. Since $\mc B=R_u\mc P(\mc B\cap\mc K)$, this shows that $\mc  G\times^{\mc P^-}Y$ has an open $\mc B$-orbit. Let $D$ be a $\mc B$-stable prime divisor of $\mc  G\times^{\mc P^-}Y$. Then $D$ intersects $Y$ if and only if it intersects $\mc BY=\mc PY$ if and only if its open $\mc B$-orbit intersects $Y$ if and only if its image in $\mc G/\mc P^-$ is dense. Assume this is the case. Then $D\cap\mc PY\cong R_u(P)\times(D\cap Y)$. So $D\cap Y$ is a $\mc B\cap\mc K$-stable prime divisor of $Y$. Now assume that the image of $D$ in $\mc G/\mc P^-$ is not dense. Then the closure of this image must be a $\mc B$-stable prime divisor of $\mc G/\mc P^-$. The inverse image in $\mc  G\times^{\mc P^-}Y$ of this closure is closed, irreducible 
and of pure codimension $1$, so it must be equal to $D$.
\end{proof}
For more details on the $B$-orbits after parabolic induction from a Levi subgroup, see \cite[Sect.~1.2]{Bri3}, \cite[Lemma~6]{Bri4}.

Finally, we assume that the local structure theorem holds for $G/H$, i.e. there exists a parabolic subgroup $Q$ of $G$ containing $B$ such that, with $M$ the Levi subgroup of $Q$ containing $T$, we have

\smallskip
\begin{enumerate}[(\rm{A}4)]
\item The derived group $DM$ is contained in $H(k)$ (i.e. it fixes $y$) and the action of $G$ induces an isomorphism $R_u(Q)\times T\cdot y\stackrel{\sim}{\to}Q\cdot y$.
\end{enumerate}
\smallskip

Let $\mc Q$ be the inverse image of $Q$ under the homomorphism $\mc P\to\mc K\stackrel{\pi}{\to}G$ and let $\mc M$ be the Levi subgroup of $\mc Q$ containing $\mc T$. Note that $\mc Q=(\mc Q\cap\mc K)R_u(\mc P)$ and that $\mc Q\cap\mc K$ is a parabolic subgroup of $\mc K$ with $\mc M$ as a Levi subgroup. Note furthermore that $\mc M$ contains the simple factors of $\mc K$ that lie in the kernel of $\pi$. 
Denote the scheme theoretic intersection of $T$ and $H$ by $H_T$ and define $\mc H_{\mc T}$ analogously.

\begin{prop}\label{prop.induction}
Assume (A1)-(A4) and let $\mc Q$ and $\mc M$ be as defined above.
\begin{enumerate}[{\rm(i)}]
\item We have $D\mc M\subseteq\mc H(k)$ and the action of $\mc G$ induces an isomorphism\\ $R_u(\mc Q)\times\mc T\cdot x\stackrel{\sim}{\to}\mc Q\cdot x$.
\item The weight lattice of $\mc G/\mc H$ is isomorphic to $X(\mc T/\mc H_{\mc T})\subseteq X(\mc T)$.
\item The valuation cone of $\mc G/\mc H$ is the inverse image of the valuation cone of $G/H$ under the map $\mb Q\ot Y(\mc T/\mc H_{\mc T})\twoheadrightarrow\mb Q\ot Y(T/H_T)$.
\item The $\mc B$-stable prime divisors of $\mc G/\mc H$ are the inverse images of the $\mc B$-stable prime divisors of $\mc G/\mc P^-$ together with the $\mc B$-stable prime divisors that intersect $\mc P^-/\mc H$. These intersections are the $\mc B_{\mc L}$-stable prime divisors of $\mc P^-/\mc H$, they are the pull-backs of the $B$-stable prime divisors of $G/H$.
\end{enumerate}
\end{prop}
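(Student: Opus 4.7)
\emph{Plan for (i) and (ii).} My first aim is to show $D\mc M \subseteq \mc H(k)$. Since $G$ is reductive, $R_u(\mc P^-) \subseteq \Ker(\pi)$, so $\pi$ factors through $\mc K$; by (A2) the induced map $\pi|_{\mc K}:\mc K \twoheadrightarrow G$ is a central isogeny, hence $\pi(D\mc M) = DM$, which by (A4) lies in $H(k)$, so $D\mc M \subseteq \pi^{-1}(H) = \ov{\mc H}$. Writing $\ov{\mc H} = \mc H \cdot \mc Z$ as in (A3) for a closed subgroup scheme $\mc Z$ of $\mc T$ normalising $\mc H$, one sees that $\mc H$ is normal in $\ov{\mc H}$ and $\ov{\mc H}/\mc H$ is a quotient of $\mc Z$, hence a diagonalisable group scheme. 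The composition $D\mc M \hookrightarrow \ov{\mc H} \twoheadrightarrow \ov{\mc H}/\mc H$ is a group homomorphism from a perfect group to a commutative one, hence trivial, proving $D\mc M \subseteq \mc H(k)$. For the product decomposition, a direct computation using $\mc H \subseteq \mc P^-$, $R_u(\mc P) \cap \mc P^- = 1$, and $R_u(Q) \cap H = 1$ (from (A4)) shows that the stabiliser of $x$ in $\mc Q$ equals $\mc M \cap \mc H$; combined with $\mc M \cdot x = \mc T \cdot x$ (which follows from $\mc M = D\mc M \cdot Z(\mc M)^0$, $Z(\mc M)^0 \subseteq \mc T$ and $D\mc M \subseteq \mc H$), the multiplication map yields the isomorphism $R_u(\mc Q) \times \mc T \cdot x \stackrel{\sim}{\to} \mc Q \cdot x$. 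Part (ii) is then immediate: the torus slice is $\mc T \cdot x \cong \mc T/\mc H_{\mc T}$, and restriction of $\mc B$-semi-invariant rational functions to this slice identifies the weight lattice with $X(\mc T/\mc H_{\mc T}) \subseteq X(\mc T)$.

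\emph{Plan for (iii).} Combining (i) with (A4), the local structure theorem holds on every elementary embedding of both $\mc G/\mc H$ and $G/H$. By Proposition~\ref{prop.limit} each valuation cone is then generated by the classes $-\ov\lambda$ of cocharacters $\lambda$ for which $\lim_{t\to 0}\lambda(t)\cdot x$ (resp.~$\lim_{t\to 0}\mu(t)\cdot y$) exists in the closed orbit of some elementary embedding. Such a limit on $\mc G/\mc H$ projects under the $\mc P^-$-equivariant morphism $\mc P^-/\mc H \to G/H$ to an analogous limit on $G/H$, giving the inclusion ``$\subseteq$''. Conversely, any cocharacter $\mu \in Y(T)$ in the valuation cone of $G/H$ has a positive integer multiple lifting through the central isogeny $\pi|_{\mc T}$ to some $\lambda \in Y(\mc T)$, and the principal $\ov{\mc H}/\mc H$-bundle structure of $\mc P^-/\mc H$ over $G/H$ (established in (iv) below) allows the convergent limit of $\mu(t)\cdot y$ to be lifted to a limit of $\lambda(t)\cdot x$ in a suitable toroidal $\mc G/\mc H$-embedding, yielding the reverse inclusion.

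\emph{Plan for (iv).} Apply Lemma~\ref{lem.inducedBstabledivisors} to $Y = \mc P^-/\mc H$ to obtain the first two assertions. Since $R_u(\mc P^-) \subseteq \mc H(k)$ we have $\mc P^-/\mc H = \mc K/(\mc K \cap \mc H)$, and the argument of (i) shows the simple factors of $\mc K$ lying in $\Ker(\pi)$ are contained in $\mc H$ and act trivially, so the $\mc K$-action factors through $\mc L$ and the orbits of $\mc B \cap \mc K$ and $\mc B_{\mc L}$ coincide. The map $\pi$ realises $\mc P^-/\mc H$ as a principal bundle over $G/H$ with diagonalisable structure group $\ov{\mc H}/\mc H$ commuting with the $\mc L$-action, so pullback establishes a bijection between $B$-stable prime divisors of $G/H$ and $\mc B_{\mc L}$-stable prime divisors of $\mc P^-/\mc H$.

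\emph{Main obstacle.} The heart of the proof is the reverse inclusion in (iii): lifting cocharacters through the central isogeny $\pi|_{\mc T}$ is only possible after multiplying by a positive integer, and extending a convergent limit from $G/H$ to $\mc G/\mc H$ via the principal bundle requires careful use of the diagonalisability of the fibres.
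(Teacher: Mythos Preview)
Your arguments for (i), (ii) and (iv) are essentially correct and close to the paper's, with one minor inaccuracy: you assert that $\pi|_{\mc K}:\mc K\to G$ is a central isogeny, but $\mc K$ may have simple factors in $\Ker(\pi)$ (this is exactly why $\mc L$ is introduced). What you actually need, and what (A2) gives, is that $\pi|_{\mc L}$ is a central epimorphism; this suffices for $\pi(D\mc M)\subseteq DM$ and for transferring the product decomposition from (A4). The paper proceeds the same way, reducing first from $\mc Q$ to $\mc Q\cap\mc K$ via $R_u(\mc P)$, then to $\mc Q\cap\mc L$, then using the isomorphism $R_u(\mc Q\cap\mc L)\stackrel{\sim}{\to}R_u(Q)$.

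Your approach to (iii), however, has a genuine gap. You invoke Proposition~\ref{prop.limit}(ii) to say that the valuation cone of $\mc G/\mc H$ is spanned by images of cocharacters $\lambda$ whose limits exist in elementary embeddings. But Proposition~\ref{prop.limit}(ii) requires the local structure theorem to hold \emph{for the elementary embedding}, not merely for $\mc G/\mc H$ itself; part (i) of the present proposition only gives the latter. The local structure theorem for toroidal (in particular elementary) $\mc G/\mc H$-embeddings is established only in Proposition~\ref{prop.toroidal}, whose proof uses part (iii) of this proposition. So your argument is circular. The same circularity infects your ``$\supseteq$'' direction: to lift a convergent limit from $G/H$ to a toroidal $\mc G/\mc H$-embedding you would need to know such embeddings are induced from $G/H$-embeddings, which again is Proposition~\ref{prop.toroidal}(ii) and depends on (iii). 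Even for the ``$\subseteq$'' direction, projecting a limit taken in an arbitrary elementary $\mc G/\mc H$-embedding to $G/H$ presupposes that the embedding maps to a $G/H$-embedding.

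The paper avoids all of this by working purely at the level of function fields and invariant valuations, with no reference to embeddings. First, restriction to $k(\mc P^-/\mc H)=k(\mc G/\mc H)^{R_u(\mc P)}$ is shown (via Lemma~\ref{lem.invval}) to set up a bijection between $\mc G$-invariant valuations of $k(\mc G/\mc H)$ and $\mc L$-invariant valuations of $k(\mc P^-/\mc H)$. Second, using $k(G/H)=k(\mc P^-/\mc H)^{\mc Z}$ and Knop's intrinsic characterisation of invariant valuations \cite[Lemma~5.1]{Knop}, one checks that a valuation of $k(\mc P^-/\mc H)$ is $\mc L$-invariant if and only if its restriction to $k(G/H)$ is $G$-invariant. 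This gives (iii) directly, and only then can one build the dictionary between toroidal embeddings.
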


\begin{proof}
(i).\ We have $\mc G/\mc H\cong\mc G\times^{\mc P^-}\mc P^-/\mc H$ and the action of $\mc G$ induces an isomorphism $R_u\mc P\times\mc P^-/\mc H\cong\mc P\cdot x$. Furthermore, $D\mc M\subseteq\pi^{-1}(DM)\subseteq\mc H(k)\mc Z(k)$, where $\mc Z$ is given by (A3). So $D\mc M=DD\mc M\subseteq\mc H(k)$. So it suffices to show that the action of $\mc Q\cap\mc K$ induces an isomorphism $R_u(\mc Q\cap\mc K)\times\mc T\cdot x\stackrel{\sim}{\to}(\mc Q\cap\mc K)\cdot x$. Next, we may replace $\mc Q\cap\mc K$ by $\mc Q\cap\mc L$. Then, by (A2), the epimorphism $\pi:\mc L\to G$ is central (the epimorphism $\mc L\to \mc P^-/\Ker(\pi)$ is central), so it induces an isomorphism $R_u(\mc Q\cap\mc K)=R_u(\mc Q\cap\mc L)\stackrel{\sim}{\to}R_u(Q)$. Now the result follows from (A4), using the morphism $\mc P^-/\mc H\to G/H$.\\
(ii).\ This follows immediately from (i) and the fact that $\mc T\cdot x\cong \mc T/\mc H_{\mc T}$.\\
(iii).\ We have $k(\mc P^-/\mc H)\cong k(\mc G/\mc H)^{R_u(\mc P)}\subseteq k(\mc G/\mc H)$. Now $R_u(P^-)$ acts trivially on $k(\mc P^-/\mc H)$, since $R_u(\mc P^-)\subseteq\mc H$ and $R_u(\mc P^-)\trianglelefteq\mc P^-$. So if $f\in k(\mc G/\mc H)$ is $R_u(\mc P)$-fixed, then it is also $R_u(\mc P^-)$-fixed. Now let $\nu$ be a valuation of $k(\mc G/\mc H)$, let $f\in k(\mc G/\mc H)^{R_u(\mc P)}$ and assume that $\nu(f^g)$ is constant for $g$ in some nonempty open subset $U_f$ of $\mc G$. Then the same holds for $g$ in $R_u(\mc P)U_f'R_u(\mc P^-)$, where $U_f'$ is a nonempty open subset of $\mc L$. So if we extend an $\mc L$-invariant valuation of $k(\mc P^-/\mc H)$ to $k(\mc G/\mc H)$ and then make it invariant using Lemma~\ref{lem.invval}, then the resulting valuation will restrict to our original valuation. Thus the restriction to $k(\mc G/\mc H)^{R_u(\mc P)}$ induces a bijection between the $\mc G$-invariant valuations of $k(\mc G/\mc H)$ and the $\mc L$-invariant valuations of $k(\mc P^-/\mc H)$.

Let $\mc Z$ be a closed subgroup scheme of $\mc T$ as in (A3). Then $k(G/H)=k(\mc P^-/\mc H\mc Z)=k(\mc P^-/\mc H)^{\mc Z}$, where the action of $\mc Z$ comes from right multiplication. So we can finish by showing that a valuation of $k(\mc P^-/\mc H)$ is $\mc L$-invariant, if this holds for its restriction to $k(G/H)$. Let $\mc H_{\mc L}$ be the scheme theoretic intersection of $\mc H$ and $\mc L$. Then $\mc P^-/\mc H=\mc L/\mc H_{\mc L}$. We follow the argument of the proof of \cite[Satz~8.1.4]{Knop2}, but use the notation of \cite[Lemma~5.1]{Knop}. See also \cite{Bri2}, proof of Thm.~4.3, for similar arguments. Let $g_1,\ldots,g_s\in k(\mc L/\mc H_{\mc L})^{(\mc B_{\mc L})}\subseteq k(\mc T/\mc H_{\mc T})$ and $h\in k[\mc L]^{(\mc B_{\mc L}\times\mc H_{\mc L}(k))}$ with $f_i=hg_i\in k[\mc L]$. After replacing $h$ by a suitable power of it we may assume that $h\in k[\mc L]^{(\mc B_{\mc L}\times\mc H_{\mc L})}$. Let $M_i$ be the $\mc L$-module generated by $f_i$. Since $\mc B_{\mc L}\cdot x$ is open in $\mc L/\mc H_{\mc L}$ and $\mc Z\subseteq B_{\mc L}$ any $B_{\mc L}$-semi-invariant in $k(\mc L/\mc H_{\mc L})$ is also a $\mc Z$-semi-invariant. 
So the elements of $M_i$ are all $\mc Z$-semi-invariants with the same $\mc Z$-character as $f_i$. Furthermore, if $f'\in(M_1\cdots M_s)^{(\mc B_{\mc L})}$, then $f'/{h^s}$ has the same $\mc Z$-character as the product of the $g_i$ (Note that these remarks also apply with $\mc Z$ replaced by $\mc H_{\mc L}$). So $u=f'/(h^s\prod_ig_i)$ has trivial $\mc Z$-character and appears therefore in $k(G/H)^{(B)}\subseteq k(T/H_T)$ by (A4). Now let $\nu$ be a valuation of $k(\mc L/\mc H_{\mc L})$ whose restriction to $k(G/H)$ is $G$-invariant. By a standard argument using Lemma~\ref{lem.invval} (for ordinary valuations) this restriction has an $\mc L$-invariant extension $\ov\nu$ to $k(\mc L/\mc H_{\mc L})$. By \cite[Lemma~5.1]{Knop} $\ov\nu(u)\ge0$ for all possible choices of the $g_i$, $h$ and $f'$. But then the same must hold for $\nu$, since $u\in k(G/H)$ and $\nu$ and $\ov\nu$ agree on $k(G/H)$. Now $\nu$ is $\mc L$-invariant, again by \cite[Lemma~5.1]{Knop}.\\
(iv).\ Except for the final statement this follows from Lemma~\ref{lem.inducedBstabledivisors}. Similar arguments as above show that every $\mc B_{\mc L}\times\mc H_{\mc L}$-semi-invariant in $k(\mc L)$ is also a $\mc B_{\mc L}\times\ov{\mc H}_{\mc L}$-semi-invariant. This implies the final assertion.
\end{proof}

\begin{prop}\label{prop.toroidal}
Assume (A1)-(A4).
\begin{enumerate}[{\rm(i)}]
\item Let $Y$ and $Z$ be toroidal embeddings of $\mc L/\mc H_\mc L$ and $\mc L/\ov{\mc H}_\mc L=G/H$ respectively. Assume the canonical morphism $\mc L/\mc H_\mc L\to\mc L/\ov{\mc H}_\mc L$ extends to a morphism $Y\to Z$. If the local structure theorem holds for $Z$, then it holds for $Y$ and $\mc G\times^{\mc P^-}Y$. If $Y$ is complete or quasi-projective, then so is $\mc G\times^{\mc P^-}Y$.
\item Assume that $\ov{\mc H}=\mc H$. The toroidal embeddings of $\mc G/\mc H$ are all of the form $\mc G\times^{\mc P^-}Y$, where $Y$ is a toroidal embedding of $G/H=\mc P^-/\mc H$. This determines a $1$-$1$-correspondence. Furthermore, the local structure theorem holds for $Y$ if and only if it holds for $\mc G\times^{\mc P^-}Y$.
\end{enumerate}
\end{prop}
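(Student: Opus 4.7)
Both parts will be proved by combining the Luna-Vust classification of spherical embeddings via colored fans with assumption (A2), which allows parabolic and Levi data to be transferred between $\mc L$ and $G$. For part~(i), I first lift the parabolic $Q\subseteq G$ from the LST for $Z$ to $Q_{\mc L}=\pi^{-1}(Q)\cap\mc L\subseteq\mc L$, with Levi $M_{\mc L}$. Since $\pi|_{\mc L}$ is central modulo a central torus (by (A2)), $R_u(Q_{\mc L})\to R_u(Q)$ is an isomorphism and, arguing as in the proof of Proposition~\ref{prop.induction}(i), $DM_{\mc L}=D(DM_{\mc L})\subseteq\mc H_{\mc L}(k)$. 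The morphism $Y\to Z$ is $\mc L$-equivariant via $\pi$ and, by an analogue of Proposition~\ref{prop.induction}(iv) applied to $\mc L/\mc H_{\mc L}\to G/H$, sends $\mc B_{\mc L}$-colors of $Y$ to $B$-colors of $Z$; hence it maps $Y_0$ into $Z_0$. Pulling back the isomorphism $R_u(Q)\times\ov{T\cdot y}^0\cong Z_0$ along the two factors (the unipotent radicals matching isomorphically, and $\ov{\mc T\cdot x}^0$ being the preimage of $\ov{T\cdot y}^0$ along the toric morphism $\mc T/\mc H_{\mc T}\to T/H_T$) gives the LST for $Y$. To extend to $\mc G\times^{\mc P^-}Y$, I use the parabolic $\mc Q$ of the text: one has $R_u(\mc Q)=R_u(\mc P)\cdot R_u(Q_{\mc L})$ semi-directly, and by Lemma~\ref{lem.inducedBstabledivisors} the open subset $(\mc G\times^{\mc P^-}Y)_0$ meets the cell $\mc PY$ in $R_u(\mc P)\times Y_0$. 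Combining these yields $R_u(\mc Q)\times\ov{\mc T\cdot x}^0\cong(\mc G\times^{\mc P^-}Y)_0$. Completeness and quasi-projectivity of $\mc G\times^{\mc P^-}Y$ follow because it is a Zariski-locally trivial bundle over the projective base $\mc G/\mc P^-$ with fiber $Y$.

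For part~(ii), the hypothesis $\ov{\mc H}=\mc H$ gives $\mc H_{\mc T}=\ov{\mc H}_{\mc T}$, so $\mc T/\mc H_{\mc T}\to T/H_T$ is an isomorphism and Proposition~\ref{prop.induction}(iii),(iv) canonically identifies the valuation cones and the sets of colors of $\mc G/\mc H$ and $G/H$. The colored fans of toroidal embeddings on the two sides therefore sit in the same combinatorial setup. For the forward direction, if $Y$ is a toroidal $G/H$-embedding then $\mc G\times^{\mc P^-}Y$ is normal (being a Zariski-locally trivial fibration over $\mc G/\mc P^-$) and toroidal: by Lemma~\ref{lem.inducedBstabledivisors} its colors are pull-backs of the colors of $Y$, and its $\mc G$-orbits are induced from the $\mc P^-$-orbits (equivalently, $G$-orbits) on $Y$, so no color contains a $\mc G$-orbit. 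A short computation using Proposition~\ref{prop.induction}(iii) and Lemma~\ref{lem.invval} shows that the fan of $\mc G\times^{\mc P^-}Y$ matches the fan of $Y$ under the canonical identification.

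For the reverse direction, given a toroidal $\mc G/\mc H$-embedding $X$ with fan $\Sigma$, let $Y$ be the toroidal $G/H$-embedding with fan $\Sigma$ under the canonical identification. Then $\mc G\times^{\mc P^-}Y$ is toroidal with fan $\Sigma$, so by the Luna-Vust classification it is $\mc G$-equivariantly isomorphic to $X$; this yields bijectivity and shows that every toroidal $\mc G/\mc H$-embedding arises via induction. The LST equivalence in (ii) then follows: one direction is (i) applied with $Z=Y$ (legitimate since $\mc H_{\mc L}=\ov{\mc H}_{\mc L}$), and the other follows by restricting the product decomposition of $(\mc G\times^{\mc P^-}Y)_0$ to the fiber $Y$ over the base point of $\mc G/\mc P^-$, again using $R_u(\mc Q)=R_u(\mc P)\cdot R_u(Q_{\mc L})$. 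The trickiest step I expect is the unwinding of $Y_0\to Z_0$ as a product in (i), which requires genuinely combining (A2) with the toric description of the closure morphism $\ov{\mc T\cdot x}^0\to\ov{T\cdot y}^0$.
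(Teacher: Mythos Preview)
Your approach is essentially the same as the paper's, which is quite terse: for (i) it simply says ``as in the proof of Proposition~\ref{prop.induction}(i)'', and for (ii) it invokes Proposition~\ref{prop.induction}(iii) to identify the valuation cones and then the Luna--Vust classification. Two points deserve more care. First, quasi-projectivity of $\mc G\times^{\mc P^-}Y$ does \emph{not} follow merely from being a Zariski-locally trivial bundle over a projective base; the paper argues by taking an ample $\mc L$-linearised line bundle $L$ on $Y$, observing that $\mc G\times^{\mc P^-}L$ is ample relative to the projection, and then applying \cite[Prop.~4.6.13(ii)]{Gro}. Second, in your toroidality argument for $\mc G\times^{\mc P^-}Y$ you assert that ``its colors are pull-backs of the colors of $Y$'', but Lemma~\ref{lem.inducedBstabledivisors} gives a second family of colors, namely the pull-backs of the Schubert divisors of $\mc G/\mc P^-$; you should note that these never contain a $\mc G$-orbit since every $\mc G$-orbit surjects onto $\mc G/\mc P^-$.
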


\begin{proof}
(i).\ The first assertion follows as in the proof of Proposition~\ref{prop.induction}(i). The assertion about completeness is standard. Now assume $Y$ is quasi-projective. Then it has an ample $\mc L$-linearised line bundle $L$. From the local triviality of $pr:\mc G\times^{\mc P^-}Y\to\mc G/\mc P^-$ we deduce that $\mc G\times^{\mc P^-}L$ is ample relative to $pr$. Since $\mc G/\mc P^-$ is projective, it follows from \cite[Prop.~4.6.13(ii)]{Gro} that $\mc G\times^{\mc P^-}Y$ is quasi-projective. See \cite[Prop.~6.2.3(iv)]{BriKu} for similar arguments.\\
(ii).\ By Proposition~\ref{prop.induction}(iii) the valuation cone of $\mc G/\mc H$ is the same as that of $G/H$. So the toroidal embeddings of these spaces are described by the same data. Furthermore it is clear that if $Y$ is the toroidal $G/H$-embedding corresponding to a fan without colours, then $\mc G\times^{\mc P^-}Y$ is the toroidal $\mc G/\mc H$-embedding corresponding to the same fan. The final assertion is clear.
\end{proof}

\begin{cornn}
Under the assumptions of Proposition~\ref{prop.toroidal}(ii), if $Y$ is $B$-canonically Frobenius split, then $\mc G\times^{\mc P^-}Y$ is $\mc B$-canonically Frobenius split.
\end{cornn}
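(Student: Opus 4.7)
The plan is to transport the given $B$-canonical splitting on $Y$ across the epimorphism $\pi\colon\mc P^-\twoheadrightarrow G$ and then combine it with the standard canonical splitting of the partial flag variety $\mc G/\mc P^-$ via the induction bundle structure.

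First I would view $Y$ as a $\mc P^-$-variety via $\pi$. Under the assumption $\ov{\mc H}=\mc H$ the kernel of $\pi$ (which contains $R_u(\mc P^-)$ together with every simple factor of $\mc K$ that $\pi$ kills) acts trivially on $Y$, so every splitting of $Y$ is automatically $\Ker(\pi)$-invariant. Since $\pi$ is central by (A2) and $\pi(\mc L\cap\mc B)=B$, the given $B$-canonical splitting of $Y$ becomes canonical with respect to $\mc L\cap\mc B$; extending trivially across $R_u(\mc P^-)$ and the kernel factors in $\mc K$ yields a $(\mc B\cap\mc P^-)$-canonical splitting of the $\mc P^-$-variety $Y$ (recall $\mc B\cap\mc P^-=\mc B\cap\mc K$).

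Next I would invoke the parabolic-induction mechanism for $B$-canonical Frobenius splittings, in the spirit of \cite[\S 4.1]{BriKu}. The partial flag variety $\mc G/\mc P^-$ carries a $\mc B$-canonical splitting, namely the $(p-1)$-st power of the section cutting out the complement of the open $\mc B$-orbit. Using the local triviality of the bundle $\mathrm{pr}\colon\mc G\times^{\mc P^-}Y\to\mc G/\mc P^-$ over the open cell $R_u(\mc P)\cdot[\mc P^-]$, together with the product decomposition $\mc B=R_u(\mc P)(\mc B\cap\mc K)$, one multiplies the pull-back of the splitting section from $\mc G/\mc P^-$ with the one coming from $Y$ inside the relative dualizing sheaf. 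This produces a section of $\omega_{\mc G\times^{\mc P^-}Y}^{\,1-p}$ whose associated element in $\End_F(\mc O_{\mc G\times^{\mc P^-}Y})$ is the required splitting.

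The principal obstacle I anticipate is verifying that this combined splitting really is $\mc B$-canonical---equivalently, that it lies in the image of the Steinberg-tensor-Steinberg map characterising canonical splittings, or that it satisfies Mathieu's compatibility with the $\mc B$-action. The delicate point is that $\mc B$ is opposite to $\mc B^-\subseteq\mc P^-$, so on the fibre $Y$ one only sees the Levi part $\mc B\cap\mc K$ of $\mc B$, while the full $\mc B$ is visible only after combining with the base $\mc G/\mc P^-$. Once this compatibility is verified, as in the argument for toroidal $G$-embeddings in \cite[Ch.~6]{BriKu}, the corollary follows.
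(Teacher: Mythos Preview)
The obstacle you flag at the end is exactly the point where your argument is incomplete, and the paper's proof is organised precisely to avoid it rather than to overcome it. Mathieu's induction theorem \cite[Thm.~4.1.17]{BriKu} produces a $B$-canonical splitting on $G\times^{B}Y$ from one on $Y$, but it requires the Borel to be the subgroup one is inducing along. In your situation $\mc B\not\subseteq\mc P^-$, so the theorem does not apply, and your proposed workaround---multiplying a splitting section pulled back from $\mc G/\mc P^-$ with the fibrewise one over the open cell---amounts to reproving Mathieu's result in a configuration not covered by the standard references. Verifying $\mc B$-canonicity of that product section directly (via the Steinberg--Steinberg characterisation or otherwise) is nontrivial, and you have not done it.

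The paper sidesteps this with a short trick you are missing: first pass from $B$-canonical to $B^-$-canonical on $Y$ using \cite[Prop.~4.1.10]{BriKu}. Then $Y$ is $\mc B^-$-canonically split as a $\mc P^-$-variety, and since $\mc B^-\subseteq\mc P^-$, Mathieu's theorem applies directly to give a $\mc B^-$-canonical splitting on $\mc G\times^{\mc B^-}Y$. One then pushes this down to $\mc G\times^{\mc P^-}Y$ along the rational morphism of Lemma~\ref{lem.flagbundle}, using \cite[Ex.~4.1.E.3]{BriKu}, and finally converts $\mc B^-$-canonical back to $\mc B$-canonical. The switch of Borel at the start is what makes the induction machinery available; without it you are stuck exactly where you say you are.
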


\begin{proof}
If $Y$ is $B$-canonically Frobenius split, then it is also $B^-$-canonically Frobenius split, by \cite[Prop.~4.1.10]{BriKu}. Then it is clearly also $\mc B^-$-canonically Frobenius split (as a $\mc P^-$-variety), so, by a result of Mathieu (see \cite[Prop.~5.5]{Mat} or \cite[Thm.~4.1.17]{BriKu}), $\mc G\times^{\mc B^-}Y$ is $\mc B^-$-canonically Frobenius split. But then $\mc G\times^{\mc P^-}Y$ is $\mc B^-$-canonically (and therefore also $\mc B$-canonically) Frobenius split by Lemma~\ref{lem.flagbundle} and \cite[Ex.~4.1.E.3]{BriKu}.
\end{proof}

\begin{rem}
Mathieu's result also gives some compatible splittings.
It is not clear, however, how one can get a splitting such that the $\mc B$-stable prime divisors and the $\mc B^-$-stable prime divisors are compatibly split. 
In the next section we will prove this in a special situation. For further results on canonical splittings of $\mc G\times^{\mc P^-}Y$ we refer to \cite[Sect.~5,6]{HeTh2}.
\end{rem}

A {\it wonderful compactification} of $G/H$ is a smooth simple complete toroidal $G/H$-embedding. Recall that ``simple'' means that there is a unique closed $G$-orbit. The wonderful compactification is unique if it exists (even without the smoothness condition). Now we strengthen assumption (A4) as follows.

\smallskip
\begin{enumerate}[({\rm A}$4'$)]
\item The homogeneous space $G/H$ has a wonderful compactification $\bf Y$ for which the local structure theorem (see Section~\ref{s.prelim}) holds.
\end{enumerate}
\smallskip

For the standard properties of the wonderful compactification we refer to \cite{DeC}, \cite{Str} or \cite{BriKu}. The unique closed orbit of $\bf Y$ is naturally isomorphic to $G/Q^-$, where $Q^-$ is the opposite of the parabolic $Q$ from the local structure theorem. The cone of $\bf Y$ is the valuation cone of $G/H$ which contains no line, and $H$ contains the connected centre of $G$.

Denote the $B$-stable prime divisors of $\bf Y$ that are not $G$-stable, by ${\bf E}_1,\ldots,{\bf E}_r$ and the $G$-stable prime divisors by ${\bf Y}_1,\ldots,{\bf Y}_n$. Let $\tilde G$ be the simply connected cover of the derived group of $G$ and let $\tilde T$ and $\tilde B$ be the maximal torus and Borel subgroup of $\tilde G$ corresponding to $T$ and $B$. Note that ${\rm Pic}(G/Q^-)={\rm Pic}^{\tilde G}(G/Q^-)$ embeds in $X(\tilde T)$ as the $\chi$ with $\la\chi,\alpha^\vee\ra=0$ for all simple roots $\alpha$ that are roots of $M$. We will use the following convention: if $A$ is a subgroup of ${\rm Pic}(G/Q^-)$ we write $\chi\in A$ when we mean that $\chi\in X(\tilde T)$ occurs in the subgroup of $X(\tilde T)$ corresponding to $A$. If ${\rm Pic}({\bf Y})$ embeds in ${\rm Pic}(G/Q^-)$ (see below) and $\chi\in{\rm Pic}({\bf Y})$, then we denote the corresponding line bundle on $\bf Y$ by $\mc L_{\bf Y}(\chi)$.

We denote by $\rho_Q\in\mb Q\ot X(T)$ the half sum of the roots of $T$ in $R_uQ$, and similarly for any parabolic subgroup of $G$. Note that $\rho_B=\rho$, the half sum of the positive roots of $G$. The longest element of the Weyl group of $G$ is denoted by $w_0$. For each $i\in\{1,\ldots,r\}$ we denote by $\sigma_i$ the canonical section of $\mc L_{\bf Y}({\bf Y}_j)$.

The theorem below is essentially contained in \cite[Ch.~6]{BriKu}. Since it has nothing to do with induction, we formulate it for $\bf Y$. We only indicate what has to be added to the arguments in \cite{BriKu}.

\begin{thm}[{cf.~\cite[Ch.~6]{BriKu}}]\label{thm.wful}
Assume (A4$'$). Then the variety ${\bf Y}_0$ is an affine space and ${\rm Pic}(\bf Y)$ is freely generated by ${\bf E}_1,\ldots,{\bf E}_r$. Now assume in addition that the map ${\rm Pic}({\bf Y})\to {\rm Pic}(G/Q^-)$, given by pull-back, is injective.
\begin{enumerate}[{\rm(i)}]
\item For every $\chi\in{\rm Pic}({\bf Y})$ there exists a $\tilde B$-semi-invariant rational section $\tau_\chi$ of $\mc L_{\bf Y}(\chi)$ which has weight $\chi$ and restricts to a nonzero rational section on $G/Q^-$; it is unique up to nonzero scalar multiples.
\item ${\omega_{\bf Y}}^{-1}=\mc L_{\bf Y}(2\rho_{Q})\otimes\mc O_{\bf Y}(\sum_j{\bf Y}_j)$.
\item Assume $Q=B$, $\rho\in{\rm Pic}({\bf Y})$ and that $\tau_\rho$ is a global section. Put $\tau^-_\rho=w_0\cdot\tau_\rho$. Then $(\tau^-_\rho\tau_\rho\sigma_1\cdots\sigma_r)^{p-1}\in H^0({\bf Y},\omega_{\bf Y}^{1-p})$ is (up to a scalar multiple) a $B$-canonical Frobenius splitting of $\bf Y$.
\end{enumerate}
\end{thm}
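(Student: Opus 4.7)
My plan is to follow the scheme of \cite[Sect.~6.2]{BriKu} for the wonderful compactification of $G_{\rm ad}$, replacing the closed orbit $G/B^-\times G/B$ there by $G/Q^-$ and using (A$4'$) together with the injectivity hypothesis to substitute for the extra biflag structure exploited in \cite{BriKu}. For the two initial assertions, the local structure theorem (A$4'$) gives ${\bf Y}_0\cong R_u(Q)\times\ov{T\cdot y}^0$; the toric factor has fan equal to the full valuation cone of $G/H$, which by smoothness of $\bf Y$ at its unique closed orbit must be a regular simplicial cone, so $\ov{T\cdot y}^0\cong\mb A^r$ and ${\bf Y}_0$ is an affine space. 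Combining ${\bf Y}\sm{\bf Y}_0={\bf E}_1\cup\cdots\cup{\bf E}_r$ with ${\rm Pic}({\bf Y}_0)=0$ via the excision sequence gives surjectivity of $\bigoplus_i\mb Z[{\bf E}_i]\to{\rm Pic}({\bf Y})$; injectivity follows because any linear relation produces an invertible regular function on the affine space ${\bf Y}_0$, hence a constant.

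For (i), the one-dimensionality of $k({\bf Y})^{(\tilde B),\chi}$ (from the open $\tilde B$-orbit of $\bf Y$) combined with the existence of a $\tilde B$-linearisation on $\mc L_{\bf Y}(\chi)$ produces $\tau_\chi$ uniquely up to scalar; nonvanishing of its restriction to $G/Q^-$ is guaranteed by the injectivity ${\rm Pic}({\bf Y})\hookrightarrow{\rm Pic}(G/Q^-)$, since such a section corresponds on $G/Q^-$ to the nonzero $\tilde B$-semi-invariant rational section of $\mc L_{G/Q^-}(\chi)$ of weight $\chi$ on its open $\tilde B$-orbit. For (ii), both sides of the claimed identity lie in ${\rm Pic}({\bf Y})\hookrightarrow X(\tilde T)$ by (i), so I would compare $\tilde T$-weights by computing a top form at $y\in{\bf Y}_0$: the $R_u(Q)$-directions contribute $-2\rho_Q$, while the $r$ directions tangent to $\ov{T\cdot y}^0\cong\mb A^r$ correspond to the intersections ${\bf Y}_j\cap{\bf Y}_0$, each meeting ${\bf Y}_0$ in a coordinate hyperplane with multiplicity one and thus accounting for the $\sum_j{\bf Y}_j$ term.

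For (iii), by (ii) applied with $Q=B$ the line bundle of $\tau^-_\rho\tau_\rho\sigma_1\cdots\sigma_r$ is $\mc L_{\bf Y}(2\rho)\otimes\mc O_{\bf Y}(\sum_j{\bf Y}_j)=\omega_{\bf Y}^{-1}$, so $s\in H^0({\bf Y},\omega_{\bf Y}^{1-p})$. To verify that $s$ is a $B$-canonical splitting compatibly splitting the ${\bf E}_i$ and ${\bf Y}_j$, I would apply the splitting criterion \cite[Prop.~1.3.11, Thm.~4.1.15]{BriKu} at the $B$-fixed point $z$ of the closed orbit $G/B^-$: in local coordinates at $z$ adapted to the local structure, the $\sigma_j$ cut out the ${\bf Y}_j$ with multiplicity one, while $\tau^-_\rho\tau_\rho$ accounts for $\sum_i{\bf E}_i$ with multiplicity one (by the standard Borel-Weil calculation on $G/B^-$ using $\rho=\sum_i\varpi_i$), so the divisor of $s$ is $(p-1)(\sum_i{\bf E}_i+\sum_j{\bf Y}_j)$ with correct leading coefficient; the local calculation matches the one in \cite[Sect.~6.2]{BriKu}. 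The $B$-canonicity then follows from the $\tilde B$-semi-invariance properties of $\tau_\rho$ together with \cite[Prop.~4.1.10]{BriKu}.

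The main obstacle I anticipate is the divisor/weight identification underlying both (ii) and (iii): namely, explicitly computing $\mc L_{\bf Y}({\bf E}_i)|_{G/Q^-}\in{\rm Pic}(G/Q^-)\subseteq X(\tilde T)$. In \cite{BriKu} this is handled via a direct $B^-\times B$-Bruhat analysis on the biflag variety; in the present generality one must identify the colours of $G/H$ with specific Schubert divisors of $G/Q^-$ using (A$4'$) and the injectivity hypothesis, and it is this identification that pins down both the coefficients in (ii) and the multiplicity-one cut-out property needed for the Frobenius splitting in (iii).
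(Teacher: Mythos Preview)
Your overall scheme matches the paper's, but three points deserve comment.

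\textbf{(i), nonvanishing on $G/Q^-$.} Your argument that injectivity of ${\rm Pic}({\bf Y})\to{\rm Pic}(G/Q^-)$ forces $\tau_\chi|_{G/Q^-}\ne0$ does not work as stated: injectivity of Picard groups says nothing about whether a \emph{particular} rational section has the closed orbit in its zero or pole locus. The paper instead constructs $\tau_\chi$ directly as a monomial in the canonical sections of the $\mc O_{\bf Y}({\bf E}_i)$. Each such canonical section restricts to a nonzero section on $G/Q^-$ simply because ${\bf E}_i$ does not contain the closed $G$-orbit---this is exactly the toroidal hypothesis, and no appeal to the injectivity of Picard groups is needed here. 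The weight statement then follows by looking at the restriction.

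\textbf{(ii), adjunction versus top-form.} Your direct top-form computation on ${\bf Y}_0$ would work, but the paper takes a shorter route: by the adjunction formula for the transversal intersection $G/Q^-=\bigcap_j{\bf Y}_j$, one has $\omega_{G/Q^-}=(\omega_{\bf Y}\otimes\mc O_{\bf Y}(\sum_j{\bf Y}_j))|_{G/Q^-}$, and since $\omega_{G/Q^-}=\mc L_{G/Q^-}(-2\rho_Q)$ the injectivity hypothesis finishes immediately. This is where injectivity is actually used.

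\textbf{(iii), splitting and canonicity.} For the splitting itself your local residue approach via \cite[Prop.~1.3.11]{BriKu} is a legitimate alternative; the paper instead observes that $(\tau^-_\rho\tau_\rho|_{G/B^-})^{p-1}$ splits $G/B^-$ and then repeatedly applies \cite[Ex.~1.3.E.4]{BriKu} to extend across the ${\bf Y}_j$. However, your justification of $B$-canonicity via $\tilde B$-semi-invariance and \cite[Prop.~4.1.10]{BriKu} is insufficient: semi-invariance alone is weaker than canonicity. The paper's argument is that $\tau^-_\rho\tau_\rho$ is the image of $f^-\otimes f^+$ under a $\tilde G$-module homomorphism ${\rm St}\otimes{\rm St}\to H^0({\bf Y},\mc L_{\bf Y}(2\rho))$, and multiplying by the $\tilde G$-fixed sections $\sigma_j$ yields a $\tilde G$-module homomorphism into $H^0({\bf Y},\omega_{\bf Y}^{-1})$; this is the very definition of a canonical splitting.

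Finally, the obstacle you anticipate---identifying $\mc L_{\bf Y}({\bf E}_i)|_{G/Q^-}$ with a specific Schubert class---is not needed for Theorem~\ref{thm.wful} as the paper proves it. That identification only enters later (Proposition~\ref{prop.wfulpicard}) in the specific setting of induction from the $G\times G$-space $G$.
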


\begin{proof}
By (A4$'$) the variety $\ov{T\cdot y}^0\subseteq\bf Y$ is a smooth affine toric variety with a $T$-fixed point, so it is an affine space. For the first statement the argument is now the same as in the proof of \cite[Lem.~6.1.9]{BriKu}.\\
(i).\ The argument is almost the same as that in the proof of \cite[Prop.~6.1.11(iv)]{BriKu}. The canonical section of $\mc O_{\bf X}({\bf E}_i)$ is $\tilde B$-semi-invariant 
and its restriction to $G/Q^-$ is nonzero, since no ${\bf E}_i$ contains a $G$-orbit. Using these canonical sections we construct the required sections $\tau_\chi$. The statement about their weight follows by considering the restriction to $G/Q^-$.\\
(ii).\ This follows, exactly as in the proof of \cite[Prop.~6.1.11(v)]{BriKu}, from the adjunction formula and the injectivity of ${\rm Pic}({\bf Y})\to {\rm Pic}(G/Q^-)$.\\
(iii).\ The arguments are standard (see \cite{Str}, \cite{Rit2} or \cite{BriKu}), we briefly sketch them. Note that $\tau^-_\rho\tau_\rho$ is the image of $f^-\otimes f^+$ under a homomorphism of $\tilde G$-modules ${\rm St}\ot{\rm St}\to H^0({\bf Y},2\rho)$, where ${\rm St}$ is the Steinberg module for $\tilde G$, i.e. the irreducible of highest weight $(p-1)\rho$. Since $Q=B$, $(\tau^-_\rho\tau_\rho|_{G/B^-})^{p-1}$ is a Frobenius splitting of $G/B^-$. Now $(\tau^-_\rho\tau_\rho\sigma_1\cdots\sigma_r)^{p-1}$ is a splitting by repeatedly applying \cite[Ex.~1.3.E.4]{BriKu} and it is canonical, since it is the image of $f^-\otimes f^+$ under a homomorphism of $\tilde G$-modules ${\rm St}\ot{\rm St}\to H^0({\bf Y},{\omega_{\bf Y}}^{-1})$.
\end{proof}

If $\mc G/\ov{\mc H}$ has a wonderful compactification $\bf X$, then we define ${\bf D}_1,\ldots,{\bf D}_r$ as the $\mc B$-stable prime divisors of $\bf X$ that intersect $\mc P^-/\mc H$. Note that ${\bf D}_i\cap{\bf Y}={\bf E}_i$. Furthermore, we define, for $\alpha$ a simple root that is not a root of $\mc K$, ${\bf D}_\alpha$ as the closure in $\bf X$ of inverse image $\mc Bs_\alpha\cdot\ov{x}$ in $\mc G/\ov{\mc H}$ of $\mc Bs_\alpha\mc P^-/\mc P^-\subseteq \mc G/\mc P^-$. Now define $\tilde{\mc G}$ and $\tilde{\mc T}$ analogously to $\tilde G$ and $\tilde T$. Note that $\tilde G$ is isomorphic to a product of simple factors of a Levi of $\tilde{\mc G}$, in particular $\tilde T$ is a subtorus of $\tilde{\mc T}$. Furthermore, ${\rm Pic}(\mc G/\mc Q^-)$ and ${\rm Pic}(\mc G/\mc P^-)$ embed in $X(\tilde{\mc T})$ as the $\chi$ with $\la\chi,\alpha^\vee\ra=0$ for all simple roots $\alpha$ that are roots of $\mc M$ respectively $\mc K$.

\begin{prop}\label{prop.wfulinduction}
Assume (A1)-(A3),(A4$'$). Then the homogeneous space $\mc G/\ov{\mc H}$ has a wonderful compactification $\bf X=\mc G\times^{\mc P^-}\bf Y$ and the local structure theorem holds for all toroidal $\mc G/\mc H$-embeddings. If $\bf Y$ is projective, then $\bf X$ is projective and a toroidal $\mc G/\mc H$-embedding $X$ is quasi-projective whenever $\ov{T\cdot x}^0\subseteq X$ is quasi-projective.

Now assume in addition that $\ov{\mc H}=\mc H$ and that the map ${\rm Pic}({\bf Y})\to {\rm Pic}(G/Q^-)$, given by pull-back, is injective.
\begin{enumerate}[{\rm(i)}]
\item The group ${\rm Pic}(\bf X)$ is freely generated by the ${\bf D}_i$ and the ${\bf D}_\alpha$, and the map ${\rm Pic}({\bf X})\to {\rm Pic}(\mc G/\mc Q^-)$ is injective.
\item Assume that no ${\bf D}_i$ contains a ${\bf D}_\alpha\cap\mc G/\mc Q^-$. Under the homomorphism that sends ${\bf E}_i$ to ${\bf D}_i$, ${\rm Pic}(\bf Y)$ embeds in ${\rm Pic}(\bf X)$ as the $\chi$ with $\la\chi,\alpha^\vee\ra=0$ for all simple roots $\alpha$ that are not roots of $\mc K$. Furthermore, if for the section $\tau_\chi$ of Theorem~\ref{thm.wful}(i) (applied to $\bf X$) we have $(\tau_\chi)|_{\bf Y}=\sum_ia_i{\bf E}_i$, then $(\tau_\chi)=\sum_ia_i{\bf D}_i+\sum_\alpha\la\chi,\alpha^\vee\ra {\bf D}_\alpha$.
\end{enumerate}
\end{prop}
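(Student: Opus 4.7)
The plan is to build ${\bf X}$ directly as $\mc G \times^{\mc P^-} {\bf Y}$ and exploit the fibration ${\bf X} \to \mc G/\mc P^-$ throughout. Smoothness of ${\bf X}$ is inherited from ${\bf Y}$ via local triviality; the unique closed $G$-orbit $G/Q^-$ of ${\bf Y}$ yields the unique closed $\mc G$-orbit $\mc G \times^{\mc P^-} G/Q^- = \mc G/\mc Q^-$ of ${\bf X}$; and toroidality, together with the local structure theorem for ${\bf X}$ and for every toroidal $\mc G/\mc H$-embedding, is supplied by Proposition~\ref{prop.toroidal}. So ${\bf X}$ is the wonderful compactification of $\mc G/\ov{\mc H}$. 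Completeness and projectivity of ${\bf X}$ transfer from ${\bf Y}$ by Proposition~\ref{prop.toroidal}(i); the quasi-projectivity clause follows by the analogous induced-from-${\bf Y}$ argument, together with the standard fact that quasi-projectivity of a toroidal embedding is controlled by that of the torus-orbit closure in its open $B$-part.

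For part (i), the local structure theorem gives ${\bf X}_0 \cong R_u(\mc Q) \times \ov{T\cdot x}^0$, and under the fiber identification with ${\bf Y}$ we have $\ov{T\cdot x}^0 \cong \ov{T\cdot y}^0 \cong \mb A^r$ by Theorem~\ref{thm.wful}, so ${\bf X}_0$ is affine space with trivial Picard group. By Lemma~\ref{lem.inducedBstabledivisors} the $\mc B$-stable prime divisors of ${\bf X}$ meeting the open $\mc G$-orbit are exactly the ${\bf D}_i$ and the ${\bf D}_\alpha$, so these generate ${\rm Pic}({\bf X})$. For freeness and for injectivity of ${\rm Pic}({\bf X}) \to {\rm Pic}(\mc G/\mc Q^-)$ I would pull back to the closed orbit: each $[{\bf D}_\alpha]$ restricts to the Schubert divisor class whose image in $X(\tilde{\mc T})$ is the fundamental weight dual to $\alpha^\vee$ (for simple $\alpha$ not of $\mc K$, hence not of $\mc M$), while $[{\bf D}_i]$ restricts through the commutative square ${\bf Y} \hookrightarrow {\bf X}$, $G/Q^- \hookrightarrow \mc G/\mc Q^-$ to the image of $[{\bf E}_i]$ in ${\rm Pic}(G/Q^-) \subseteq {\rm Pic}(\mc G/\mc Q^-) \hookrightarrow X(\tilde{\mc T})$, which lies in the sublattice on which those $\alpha^\vee$ vanish. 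The hypothesised injectivity ${\rm Pic}({\bf Y}) \hookrightarrow {\rm Pic}(G/Q^-)$ gives linear independence of the $[{\bf E}_i]$-images, and the two families sit in complementary sublattices of $X(\tilde{\mc T})$, proving both freeness and the injectivity into ${\rm Pic}(\mc G/\mc Q^-)$.

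For part (ii), the rule ${\bf E}_i \mapsto {\bf D}_i$ extends by the freeness from (i) to an injection ${\rm Pic}({\bf Y}) \hookrightarrow {\rm Pic}({\bf X})$ with image $\bigoplus_i \mb Z \cdot [{\bf D}_i]$; inside $X(\tilde{\mc T})$ this image is cut out by $\la \chi, \alpha^\vee \ra = 0$ for all simple $\alpha$ not of $\mc K$, since the ${\bf D}_\alpha$ descend from Schubert divisors on $\mc G/\mc P^-$ and therefore pair nontrivially only with their own $\alpha^\vee$. For the divisor formula, by (i) I write $(\tau_\chi) = \sum_i b_i {\bf D}_i + \sum_\alpha c_\alpha {\bf D}_\alpha$; the hypothesis that no ${\bf D}_i$ contains a component of ${\bf D}_\alpha \cap \mc G/\mc Q^-$ ensures that restriction to ${\bf Y}$ kills each ${\bf D}_\alpha$ and yields $\sum_i b_i {\bf E}_i = (\tau_\chi|_{\bf Y}) = \sum_i a_i {\bf E}_i$, whence $b_i = a_i$. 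To extract $c_\alpha$ I would restrict $\tau_\chi$ further to $\mc G/\mc Q^-$: it is a $\tilde{\mc B}$-semi-invariant rational section of weight $\chi$ there, and standard Schubert calculus on the partial flag variety gives coefficient $\la\chi,\alpha^\vee\ra$ along each $\alpha$-Schubert divisor, so $c_\alpha = \la\chi,\alpha^\vee\ra$. The principal obstacle throughout is maintaining compatibility between the restrictions ${\rm Pic}({\bf X}) \to {\rm Pic}({\bf Y})$, ${\rm Pic}({\bf X}) \to {\rm Pic}(\mc G/\mc Q^-)$, and ${\rm Pic}({\bf Y}) \to {\rm Pic}(G/Q^-)$ together with their embeddings into $X(\tilde T)$ and $X(\tilde{\mc T})$, so that linear independence and coroot pairings transfer unambiguously; once this bookkeeping is in place, everything reduces to the standard facts about fundamental weights and Schubert divisors on $\mc G/\mc Q^-$.
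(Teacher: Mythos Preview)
Your overall strategy matches the paper's: build ${\bf X}$ by induction, appeal to Proposition~\ref{prop.toroidal} for the local structure theorem and (quasi-)projectivity, and for (i) combine the affine-space description of ${\bf X}_0$ with Lemma~\ref{lem.inducedBstabledivisors} and Theorem~\ref{thm.wful}. The paper is terser but proceeds the same way; its proof of (i) reads ``a simple diagram chase'' and (ii) is deduced from the extra hypothesis on the ${\bf D}_i$.

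There is, however, a genuine slip in your argument for (i). You claim that $[{\bf D}_i]|_{\mc G/\mc Q^-}$ lands in the sublattice of $X(\tilde{\mc T})$ on which the coroots $\alpha^\vee$ (for $\alpha$ not a root of $\mc K$) vanish, and then appeal to ``complementary sublattices''. That claim is exactly the content of the \emph{additional} hypothesis in (ii); it is not available in (i). Without it, ${\bf D}_i\cap\mc G/\mc Q^-$ may well contain components that are pull-backs from $\mc G/\mc P^-$, so its class can have nonzero $\varpi_\alpha$-part. The diagram chase that actually works in (i) is two-step: restrict further to $G/Q^-$, where every ${\bf D}_\alpha$ dies (it is a pull-back from $\mc G/\mc P^-$ and $G/Q^-$ sits over the base point) and ${\bf D}_i$ becomes ${\bf E}_i|_{G/Q^-}$; the injectivity hypothesis on ${\rm Pic}({\bf Y})\to{\rm Pic}(G/Q^-)$ then forces the ${\bf D}_i$-coefficients to vanish, after which independence of the pull-backs ${\bf D}_\alpha|_{\mc G/\mc Q^-}$ (coming from ${\rm Pic}(\mc G/\mc P^-)\hookrightarrow{\rm Pic}(\mc G/\mc Q^-)$) finishes the job. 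This also gives freeness.

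Correspondingly, in (ii) you misplace where the hypothesis is used. Restriction to ${\bf Y}$ \emph{always} kills ${\bf D}_\alpha$ (since ${\bf D}_\alpha\cap{\bf Y}=\emptyset$), so $b_i=a_i$ needs nothing extra. The hypothesis is needed exactly where you invoke ``standard Schubert calculus'': without it, pairing $\chi=\sum_i b_i[{\bf D}_i]|_{\mc G/\mc Q^-}+\sum_\alpha c_\alpha\varpi_\alpha$ with $\alpha^\vee$ does \emph{not} give $c_\alpha$ cleanly, because the first sum could contribute. Once the hypothesis forces $[{\bf D}_i]|_{\mc G/\mc Q^-}$ into the span of the Schubert divisors not pulled back from $\mc G/\mc P^-$, both the identification of the image of ${\rm Pic}({\bf Y})$ and the formula $c_\alpha=\la\chi,\alpha^\vee\ra$ follow, which is how the paper argues.
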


\begin{proof}
By Proposition~\ref{prop.toroidal}, $\bf X$ is a wonderful compactification of $\mc G/\ov{\mc H}$ for which the local structure theorem holds and which is projective if $\bf Y$ is projective. Since every toroidal $\mc G/\mc H$-embedding dominates $\bf X$, we get, by Proposition~\ref{prop.toroidal}(i), that the local structure theorem holds for all $\mc G/\mc H$-embeddings. The assertion about quasi-projectivity follows as in \cite[Prop.~6.2.3(iv)]{BriKu} using \cite[Prop.~4.6.13(ii)]{Gro}. Now assume that $\ov{\mc H}=\mc H$ and that the map ${\rm Pic}({\bf Y})\to {\rm Pic}(G/Q^-)$, given by pull-back, is injective.\\
(i).\ The first assertion follows from Lemma~\ref{lem.inducedBstabledivisors} and Theorem~\ref{thm.wful}. The next assertion follows from a simple diagram chase, see also \cite[Prop.~4.1]{DeC}.\\
(ii).\ From the assumption it follows that the restrictions of the ${\bf D}_i$ to $\mc G/\mc Q^-$ lie in the span of the $\mc B$-stable prime divisors of $\mc G/\mc Q^-$ that are not pull-backs from $\mc G/\mc P^-$. These correspond to the $\chi\in{\rm Pic}(\mc G/\mc Q^-)$ with $\la\chi,\alpha^\vee\ra=0$ for all simple roots $\alpha$ that are not roots of $\mc K$. This implies the first assertion and the second assertion follows from this.
\end{proof}

\begin{cornn}
Assume (A1)-(A3),(A4$'$).
\begin{enumerate}[{\rm(i)}]
\item If $\bf Y$ is projective, then every $\mc G/\mc H$-embedding has a $\mc G$-equivariant resolution by a quasi-projective toroidal embedding.
\item Assume in addition that $\ov{\mc H}=\mc H$. If all toroidal $G/H$-embeddings are $B$-canonically Frobenius split, then all $\mc G/\mc H$-embeddings are $\mc B$-canonically Frobenius split.
\end{enumerate}
\end{cornn}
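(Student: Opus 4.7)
My plan for (i) is to construct the resolution by the standard decoloration of the coloured fan of $X$, refined to be simultaneously smooth and projective in the toric sense; my plan for (ii) is to combine such resolutions with the fact that toroidal $\mc G/\mc H$-embeddings are $\mc B$-canonically split in order to push splittings down to an arbitrary embedding.

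For (i), starting from the coloured fan of the given $\mc G/\mc H$-embedding $X$, I would form the uncoloured fan whose cones are the intersections of the cones of $X$ with the valuation cone $\mathcal V$ of $\mc G/\mc H$, and then subdivide it so that each cone is generated by part of a $\mb Z$-basis of the cocharacter lattice $Y(\mc T/\mc H_{\mc T})$ of $\mc T\cdot x$ (dual to the weight lattice of Proposition~\ref{prop.induction}(ii)), taking the refinement to also admit a strictly convex piecewise-linear support function on its support. The associated toroidal $\mc G/\mc H$-embedding $\tilde X$ is then smooth with quasi-projective toric slice $\ov{T\cdot x}^0\subseteq\tilde X$, so the quasi-projectivity criterion of Proposition~\ref{prop.wfulinduction} (which uses $\bf Y$ projective) yields that $\tilde X$ itself is quasi-projective. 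The refinement of fans induces a $\mc G$-equivariant proper birational morphism $\tilde X\to X$, which is the desired resolution.

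For (ii), by Proposition~\ref{prop.toroidal}(ii), which uses $\ov{\mc H}=\mc H$, every toroidal $\mc G/\mc H$-embedding is of the form $\mc G\times^{\mc P^-}Y$ for a toroidal $G/H$-embedding $Y$. Combining the hypothesis that every such $Y$ is $B$-canonically Frobenius split with the corollary to Proposition~\ref{prop.toroidal} yields that every toroidal $\mc G/\mc H$-embedding is $\mc B$-canonically split. For an arbitrary $\mc G/\mc H$-embedding $X$, I would apply (i) to obtain a $\mc G$-equivariant proper birational morphism $f\colon\tilde X\to X$ with $\tilde X$ a smooth toroidal embedding, hence $\mc B$-canonically split. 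Normality of $X$ and the fact that $f$ is proper birational give $f_*\mc O_{\tilde X}=\mc O_X$, so, using $f_*F_*\mc O_{\tilde X}=F_*\mc O_X$, the splitting $F_*\mc O_{\tilde X}\to\mc O_{\tilde X}$ pushes forward to a splitting $F_*\mc O_X\to\mc O_X$; the $\mc G$-equivariance of $f$ transfers the $\mc B$-canonical property to the pushed-forward splitting (cf.\ \cite[Sect.~4.1]{BriKu}).

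The most delicate step I expect is the combinatorial construction in (i): producing a single refinement of the uncoloured fan that is smooth, projective, \emph{and} refines the original cones. Each property individually is standard in toric geometry, but securing all three simultaneously requires some bookkeeping, for instance first subdividing to make the fan smooth and then subdividing further to realise a strictly convex support function while preserving smoothness. The remainder of (ii) is then routine, the only substantive point being the preservation of $\mc B$-canonicity under pushforward, which is a standard functoriality of canonical Frobenius splittings.
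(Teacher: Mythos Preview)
Your proposal is correct and follows essentially the same route as the paper. For (i) the paper simply invokes Proposition~\ref{prop.wfulinduction} together with the standard toric/spherical arguments from \cite[Prop.~3]{Rit2} and \cite[Prop.~6.2.5]{BriKu}, which amount precisely to the decoloration-plus-smooth-projective-refinement procedure you describe; for (ii) the paper combines (i), Proposition~\ref{prop.toroidal} and its corollary, and the pushforward of canonical splittings along morphisms with $f_*\mc O_{\tilde X}=\mc O_X$ (\cite[Ex.~4.1.E.3]{BriKu}), exactly as you do.
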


\begin{proof}
(i).\ This follows from Proposition~\ref{prop.wfulinduction} and the arguments in the proofs of \cite[Prop.~3]{Rit2} and \cite[Prop.~6.2.5]{BriKu}.\\
(ii).\ This follows from (i), Proposition~\ref{prop.toroidal} and its corollary, and \cite[Ex.~4.1.E.3]{BriKu}.
\end{proof}

We will need the lemma below in the next section. The proof is obvious. Note that the parabolic $\mc P^{'-}$ and the morphism $\pi':\mc P^{'-}\to G$ satisfy the assumptions of \cite[Sect.~4]{DeC}.

\begin{lem}\label{lem.otherPandH}
Let $\mc P^{'-}$ be the subgroup of $\mc P^-$ generated by $\mc L$ and $\mc B^-$ and let $\mc H'$ be the scheme theoretic intersection of $\mc H$ and $\mc P'$. Then $\mc G$, $\mc P^{'-}$, $\mc H'$, $G$, $H$ and the restriction $\pi':\mc P^{'-}\to G$ of $\pi$ satisfy (A1)-(A3) (and (A4)). If we define $\mc K',\mc L',\mc Q'$ analogous to $\mc K,\mc L,\mc Q$, then $\mc L'=\mc K'=\mc L$ and if $Q=B$, then $\mc Q'=\mc B$. If $\mc H$ is the scheme theoretic inverse image of $H$ under $\pi$, then the same holds for $\mc H'$, $H$ and $\pi'$.
\end{lem}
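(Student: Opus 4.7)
The plan is to verify each clause in turn. The key preliminary observation is that (A2) forces $\pi$ to vanish on $R_u(\mc P^-)$ (since $\mc P^-/\Ker(\pi)$ must be reductive), and by construction $\mc L$ contains every simple factor of $\mc K$ not in $\Ker(\pi)$. Hence $\pi|_{\mc L}$ already surjects onto $G$, so $\pi' = \pi|_{\mc P^{'-}}$ is an epimorphism, which is the surjectivity the setup requires.

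Now (A1) and (A4) concern only $G$, $H$ and the base point $y$, so they carry over verbatim. For (A2), the isogeny $\mc P^{'-}/\Ker(\pi') \to G$ factors the central isogeny $\mc P^-/\Ker(\pi) \to G$ through an injection, hence is central. For (A3), let $\mc Z \subseteq \mc T$ be a subgroup scheme with $\ov{\mc H} = \mc H \cdot \mc Z$ and $\mc Z$ normalising $\mc H$. Since $\mc T \subseteq \mc L \subseteq \mc P^{'-}$, we have $\mc Z \subseteq \mc P^{'-}$, and a routine computation yields $\pi'^{-1}(H) = \ov{\mc H} \cap \mc P^{'-} = \mc H' \cdot \mc Z$; moreover $\mc Z$ normalises $\mc H' = \mc H \cap \mc P^{'-}$, because it normalises both $\mc H$ (by assumption) and $\mc P^{'-}$ (as $\mc T \subseteq \mc P^{'-}$).

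The identification of the Levi data is a root-system calculation. Relative to $\mc T$, the roots of $\mc P^{'-}$ are the union of the roots of $\mc L$, the negative roots of $\mc K$, and the roots of $R_u(\mc P^-)$; a root appears together with its negative in this set precisely when it is a root of $\mc L$, so the Levi of $\mc P^{'-}$ containing $\mc T$ is $\mc L$. Opposite parabolics share their Levi, hence $\mc K' = \mc L$; and since by definition no simple factor of $\mc L$ lies in $\Ker(\pi)$, also $\mc L' = \mc L$. When $Q = B$, the centrality of $\pi|_{\mc L}$ makes the preimage of $B$ in $\mc L$ equal to the Borel $\mc B \cap \mc L$ of $\mc L$, and multiplying by $R_u(\mc P')$ restores the full positive unipotent radical, giving $\mc Q' = \mc B$.

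Finally, if $\mc H = \pi^{-1}(H)$, then $\mc Z \subseteq \mc H$, so the identity $\pi'^{-1}(H) = \mc H' \cdot \mc Z$ obtained above collapses to $\mc H' = \pi'^{-1}(H)$, as claimed. No step presents a genuine obstacle; the only computation requiring a moment's care is the root-theoretic identification of the Levi of $\mc P^{'-}$, which then forces the identification of $\mc Q'$ when $Q = B$.
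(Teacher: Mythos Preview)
Your proof is correct. The paper itself gives no proof beyond declaring it ``obvious,'' and what you have written is precisely the routine verification the paper leaves to the reader: you correctly identify $\mc P^{'-}$ as a parabolic with Levi $\mc L$ via the root calculation, check (A1)--(A4) (the only nontrivial one being (A3), where the key point $\pi'^{-1}(H)=(\mc H\mc Z)\cap\mc P^{'-}=\mc H'\mc Z$ uses $\mc Z\subseteq\mc T\subseteq\mc P^{'-}$), and unwind the definition of $\mc Q'$ when $Q=B$.
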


\section{Induction from the $G\times G$-space $G$}\label{s.Ginduction}
\subsection{The general case}\label{ss.Ginductiongeneral}
In this section we obtain results in the special case of induction from the $G\times G$-space $G$. Let $T$ be a maximal torus of $G$ and let $B$ be a Borel subgroup of $G$ containing $T$. The r\^ole of the groups $G$ and $T$ from Section~\ref{s.induction} is now played by $G\times G$ and $T\times T$, and $B^-\times B$ plays the r\^ole of $B$ and $Q$. Furthermore, we take $H$ to be the diagonal in $G\times G$ and $H_{\rm a}$ the subgroup scheme of $G\times G$ generated by the diagonal and the scheme theoretic centre of $G\times G$. We assume given a connected reductive group $\mc G$ with parabolic $\mc P^-$ and closed subgroup $\mc P^-$ and a homomorphism $\pi:\mc P^-\to G\times G$ such that with $\mc H$ the scheme theoretic inverse image of $H$ under $\pi$ assumptions (A1) and (A2) from Section~\ref{s.induction} are satisfied. We define ${\mc H}_{\rm a}$ as $\mc H\mc Z$, where $\mc Z$ is the scheme theoretic inverse image in $\mc T$ of the scheme theoretic centre of $G\times G$. The groups $\mc T$, $\mc B$, $\mc K$, $\mc L$, $\mc Q$ and $\mc M$ are defined as in Section~\ref{s.induction}. Note that $(G\times G)/H=G$ and $(G\times G)/H_{\rm a}=G_{\rm ad}$ (the adjoint group). The base point of $\mc G/\mc H$ is identified with that of $G\times G/H=G$, the unit of $G$, and it is denoted by $x$. The base point of $\mc G/{\mc H}_{\rm a}$ is identified with that of $(G\times G)/H_{\rm a}=G_{\rm ad}$, the unit of $G_{\rm ad}$, and it is denoted by $x_{\rm a}$ (we abandon the notation $\ov{\mc H}$ and $\ov{x}$ from Section~\ref{s.induction}).

We can now apply the results from Section~\ref{s.induction} to $\mc G$ and $\pi:\mc P^-\to G\times G$ as above and then to the subgroups $\mc H$ and $H$ or to $\mc H$ and $H_{\rm a}$ or to ${\mc H}_{\rm a}$ and $H_{\rm a}$. In the first and the third case we have that the first group is the scheme-theoretic inverse image under $\pi$ of the second. In particular, by \cite{Str} (or \cite[Sect.~6]{BriKu}) and Proposition~\ref{prop.wfulinduction}, $\mc G/{\mc H}_{\rm a}$ has a wonderful compactification $\bf X=\mc G\times^{\mc P^-}\bf Y$ for which the local structure theorem holds, where $\bf Y$ is the wonderful compactification of $G_{\rm ad}$. Note also that the local structure theorem holds for all toroidal $\mc G/\mc H$-embeddings and that all $\mc G/\mc H$-embeddings are $\mc B$-canonically Frobenius split by \cite[Thm.~6.2.7]{BriKu} and (ii) of the corollary to Proposition~\ref{prop.wfulinduction}. We will obtain a more precise result later in this section.

Let $(\alpha_i)_{i\in I_1}$ be the simple roots of $G$. The simple roots of $\mc L$ are denoted $\alpha_{i1}$, $\alpha_{i2}$, $i\in I_1$, corresponding to the simple roots $(-\alpha_i,0)$ and $(0,\alpha_i)$ of $G\times G$. The other simple roots of $\mc K$ are denoted $(\alpha_j)_{j\in I_2}$ and the simple roots of $\mc G$ that are not roots of $\mc K$ are denoted $(\alpha_j)_{j\in J}$. So the simple roots of $\mc G$ are $\alpha_{i1}$, $\alpha_{i2}$, $i\in I_1$, and $\alpha_j$, $j\in I_2\cup J$. A connected unipotent subgroup of $G\times G$ stable under conjugation by $T\times T$ is identified with a subgroup of $\mc G$.

If $X$ is an embedding of $\mc G/\mc H$, we denote the $\mc B$-stable prime divisor $\ov{\mc Bs_{\alpha_{i1}}\cdot x}$ ($=\ov{\mc Bs_{\alpha_{i2}}\cdot x}$) of $X$ by $D_i$, $i\in I_1$, and the $\mc B$-stable prime divisor $\ov{\mc Bs_{\alpha_j}\cdot x}$ of $X$ by $D_j$, $j\in J$. So $D_i$ corresponds to the $B^-\times B$-stable prime divisor $\ov{B^-s_{\alpha_i}B}$ of $G$ and $D_j$ is the pull-back of the $\mc B$-stable prime divisor $\ov{\mc Bs_{\alpha_j}\mc P^-/\mc P^-}$ of $\mc G/\mc P^-$. We use the same notation for an embedding of $\mc G/{\mc H}_{\rm a}$. From the context it should be clear which of the two homogeneous spaces we consider and which embedding.

Now we define $\mc H'$, $\mc P^{'-}$ and $\pi':\mc P^{'-}\to G\times G$ as at the end of Section~\ref{s.induction} and we define ${\mc H}'_{\rm a}$ analogous to ${\mc H}_{\rm a}$. We denote the base point of $\mc G/\mc H'$ by $x'$. Then we can introduce the $\mc B$-stable prime divisors that are not $\mc G$-stable of any embedding of $\mc G/\mc H'$ or $\mc G/{\mc H}'_{\rm a}$ as above. They are denoted by $D_i'$, $i\in I_1$, and ${D'}_j$, $j\in J\cup I_2$. We denote the wonderful compactification of $\mc G/{\mc H}'_{\rm a}$, which exists by Proposition~\ref{prop.wfulinduction}, by ${\bf X}'$.

In case we are dealing with $\bf X$ or $\bf X'$ we will denote the prime divisors in boldface notation. Their boundary divisors are denoted by ${\bf X}_i$ and ${\bf X}_i'$, $i\in I_1$.

We have $\mc T\cdot x=(T\times T)\cdot x=T$ and we will denote the extension of $f\in k(T)$ to a $\mc U$-invariant rational function on $\mc G/\mc H$ by $\ov{f}$. We will consider a character of $T\times T$ also as a character of $\mc T$ by means of the epimorphism $\mc T\to T\times T$. Recall Remark~\ref{rem.danger} and the text after it: the $\mc T$-weight of a character $\chi\in X(T)$, as a function on $T$ or extended to $\mc B\cdot x$, is $(-\chi,\chi)$. This is the negative of its composite with the orbit map $T\times T\to T$. By the above we have embeddings $X(T)\hookrightarrow X(T\times T)\hookrightarrow X(\mc T)$ and surjections $\mb Q\ot Y(\mc T)\twoheadrightarrow\mb Q\ot Y(T\times T)\twoheadrightarrow\mb Q\ot Y(T)$. For example, under the above embeddings of character groups, $\alpha_i$ goes to $(-\alpha_i,\alpha_i)$ and then to $\alpha_{i1}+\alpha_{i2}$. Furthermore, for $\chi\in X(T)$, $t\in\mc T$ and $z\in \mc B\cdot x$ we have $\ov{\chi}(t\cdot z)=\chi(t^{-1})\ov{\chi}(z)$ \big($=(-\chi,\chi)(t^{-1})\ov{\chi}(z)$\big). This does not contradict the usual formula $\ov{\chi}(szt)=\chi(st)\ov{\chi}(z)$ on $B^-B$, because $szt$ should be read as $(s,t^{-1})\cdot z$.

We remind the reader that, because of the inclusions $X(T)\subseteq k(T)=k(\mc G/\mc H)^{\mc U}\subseteq k(\mc G/\mc H)$, every valuation of $k(\mc G/\mc H)$ determines an element of $\Hom_{\mb Z}(X(T),\mb Q)=\mb Q\ot Y(T)$. In particular this applies to the valuation associated to a prime divisor in any $\mc G/\mc H$-embedding. Similar remarks apply to the homogeneous spaces $\mc G/{\mc H}_{\rm a}$, $\mc G/\mc H'$ and $\mc G/{\mc H}'_{\rm a}$.

In the formula in assertion (ii) below the first pairing is the pairing between characters and cocharacters of $T$ and the second one is the pairing between characters and cocharacters of $\mc T$. There $\chi$ should be read as $(-\chi,\chi)\in X(\mc T)$, the $\mc T$-weight of $\ov \chi$. The first pairing can also be written like this, by replacing it by $\la\chi,\alpha_{i1}^\vee\ra$ or $\la\chi,\alpha_{i2}^\vee\ra$. Note that $\la\chi,\alpha_j^\vee\ra=0$ for all $\chi\in X(T)$ and all $j\in I_2$, since, for $j\in I_2$, $\alpha_j^\vee$ takes its values in the kernel of $\pi$.


\begin{prop}\label{prop.valuationcone}\
\begin{enumerate}[{\rm(i)}]
\item The valuation cone of $\mc G/\mc H$ is the antidominant Weyl chamber in $\mb Q\ot Y(T)$.
\item The image of $D_i$, $i\in I_1$, in $\mb Q\ot Y(T)$ is $\alpha_i^\vee$ and the image of $D_j$, $j\in J$, is the image of $\alpha_j^\vee$. Put differently, if $\ov\chi\in k(\mc G/\mc H)^{\mc U}$ extends $\chi\in X(T)$, then
    $$(\ov\chi)=\sum_{i\in I_1}\la\chi,\alpha_i^\vee\ra D_i+\sum_{j\in J}\la\chi,\alpha_j^\vee\ra D_j\ .$$
\item The analogues of (i) and (ii) for $\mc G/{\mc H}_{\rm a}$ ($T$ replaced by $T_{\rm ad}$) hold.
\item The analogues of (i) and (ii) for $\mc G/\mc H'$ and $\mc G/{\mc H}'_{\rm a}$ hold. 
\end{enumerate}
\end{prop}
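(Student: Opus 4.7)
My plan is to derive (i) from Proposition~\ref{prop.induction}(iii) combined with the Corollary to Proposition~\ref{prop.limit}, and to prove (ii) by computing the coefficients of $(\ov\chi)$ along the two families of $\mc B$-stable prime divisors separately. Parts (iii) and (iv) will then follow by applying exactly the same arguments to $\mc G/\mc H_{\rm a}$, $\mc G/\mc H'$ and $\mc G/\mc H'_{\rm a}$, using Lemma~\ref{lem.otherPandH} to verify the hypotheses (A1)--(A4) for $\mc P^{'-}$ in the last two cases (and noting that the r\^ole of $J$ is then played by $J\cup I_2$, since $\mc K'=\mc L'=\mc L$).

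For (i), since $\mc H=\pi^{-1}(H)$ by construction we have $\mc H_{\mc T}=\pi^{-1}(H_T)$, so the natural map $\mc T/\mc H_{\mc T}\to(T\times T)/H_T$ is an isomorphism. The surjection of Proposition~\ref{prop.induction}(iii) thus becomes an identification, and via the canonical isomorphism $(T\times T)/H_T\cong T$ coming from the orbit map, the valuation cone of $\mc G/\mc H$ is identified with that of the $G\times G$-space $G$. The latter is the antidominant Weyl chamber of $\mb Q\ot Y(T)$ by the Corollary to Proposition~\ref{prop.limit}.

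For (ii), write $(\ov\chi)=\sum_{i\in I_1}c_iD_i+\sum_{j\in J}c_jD_j$ using Proposition~\ref{prop.induction}(iv). To compute $c_i$, restrict $\ov\chi$ to the open $\mc P^-$-orbit $\mc P^-\cdot x$: under the identification of this orbit with the $G\times G$-space $G$ from Section~\ref{s.induction}, the divisors $D_j$ $(j\in J)$ have empty intersection with $\mc P^-\cdot x$ and $D_i\cap\mc P^-\cdot x=\ov{B^-s_{\alpha_i}B}$, so $c_i$ is the multiplicity of $\ov{B^-s_{\alpha_i}B}$ in the divisor on $G$ of the standard $(B^-\times B)$-semi-invariant of weight $\chi$, which is $\la\chi,\alpha_i^\vee\ra$ by the classical Bruhat-decomposition computation. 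For $c_j$ I use the $\SL_2$-slice $\mc G_j=\la U_{\pm\alpha_j},\alpha_j^\vee(\mb G_m)\ra$: the inclusions $U_{-\alpha_j}\subseteq R_u(\mc P^-)\subseteq\mc H$ (from (A3)) and $U_{\alpha_j}\not\subseteq\mc P^-$ show that $\mc G_j\cdot x$ is a non-trivial orbit through $x$ that also passes through $s_{\alpha_j}\cdot x$, which belongs to the open $\mc B$-orbit of $D_j$. The $U_{-\alpha_j}$-orbit of $s_{\alpha_j}\cdot x$ is a smooth curve transverse to $D_j$ at $s_{\alpha_j}\cdot x$; rewriting it as $s_{\alpha_j}\cdot U_{\alpha_j}(\pm s)\cdot x$, the $\mc U$-invariance of $\ov\chi$ and its $\mc T$-weight $(-\chi,\chi)$ allow one to compute its vanishing order at $s=0$ directly by semi-invariance, yielding $c_j=\la(-\chi,\chi),\alpha_j^\vee\ra_{\mc T}$, which is exactly $\la\chi,\alpha_j^\vee\ra$ in the sense of the statement.

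The main technical obstacle is the computation of $c_j$: one must verify the transversality of the $\SL_2$-slice to $D_j$ at $s_{\alpha_j}\cdot x$ (so that the restriction of $\ov\chi$ to the slice genuinely computes $\nu_{D_j}(\ov\chi)$) and keep careful track of the sign conventions identifying $\mb Q\ot Y(\mc T/\mc H_{\mc T})$ with $\mb Q\ot Y(T)$ and relating the $\mc T$-weight $(-\chi,\chi)$ of $\ov\chi$ to the pairing $\la\chi,\alpha_j^\vee\ra$ claimed in the statement.
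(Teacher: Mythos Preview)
Your proposal is correct and follows essentially the same route as the paper. Part (i) is identical. For (ii), both you and the paper obtain the coefficient of $D_i$ by restricting to $\mc P^-\cdot x\cong G$ and invoking the classical Bruhat computation, and both compute the coefficient of $D_j$ via the $\SL_2$-identity $n_{\alpha_j}(1)\theta_{\alpha_j}(a)=\alpha_j^\vee(-a^{-1})\theta_{\alpha_j}(-a)\theta_{-\alpha_j}(a^{-1})$ together with the crucial inclusion $U_{-\alpha_j}\subseteq\mc H$ (resp.\ $\mc H'$).

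There are two organisational differences worth noting. First, the paper runs the reduction the other way: it observes that $\ov\chi$ on $\mc G/\mc H'$ is the pull-back of $\ov\chi$ on $\mc G/\mc H$, so it suffices to prove the divisor formula once on $\mc G/\mc H'$ (where the index set is $J\cup I_2$) and then read off (ii) and (iii). Your order also works, but when you pass to (iv) you must check separately that $U_{-\alpha_j}\subseteq\mc H'$ for $j\in I_2$ (it holds because those $U_{-\alpha_j}$ lie in $\Ker(\pi)\cap\mc B^-$). Second, for $c_j$ the paper writes down the full open chart $\mc U_{s_{\alpha_j}}\times\mb G_a\times\mc T\cdot x'\stackrel{\sim}{\to}s_{\alpha_j}\mc B\cdot x'$, in which $D_j$ is visibly the hyperplane $\{a=0\}$; this makes the transversality of your one-parameter curve automatic and turns your ``main technical obstacle'' into a triviality. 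Your curve $s\mapsto\theta_{-\alpha_j}(s)s_{\alpha_j}\cdot x=s_{\alpha_j}\theta_{\alpha_j}(cs)\cdot x$ is exactly the $\mb G_a$-axis in this chart, so the two computations are the same; the paper's packaging just avoids having to argue transversality separately.
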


\begin{proof}
(i).\ This is immediate from Proposition~\ref{prop.induction}(ii) and the corollary to Proposition~\ref{prop.limit}. Similar for the other three homogeneous spaces.\\
(ii)-(iv). The function $\ov\chi$ on $\mc G/\mc H'$ is the pull-back of that on $\mc G/\mc H$ and similarly for $\mc G/{\mc H}'_{\rm a}$ and $\mc G/{\mc H}_{\rm a}$. So the formula in (ii) only has to be proved for $\mc G/\mc H'$ and $\mc G/{\mc H}'_{\rm a}$. Since these cases are completely analogous we only prove it for $\mc G/\mc H'$. That the coefficient of $D_i$ equals $\la\chi,\alpha_i^\vee\ra$ follows by restricting to $\mc P^-/\mc H$ and applying a standard result, see e.g. \cite[Thm.~1(iii)]{T1}. To determine the other coefficients we follow the arguments in \cite{T1}. For each root $\alpha$ of $\mc G$ choose an isomorphism $\theta_\alpha:{\mb G}_a\stackrel{\sim}{\to}\mc U_\alpha$ as in \cite{Jan} II.1.1-1.3 (there denoted by $x_\alpha$). For $a\in K^\times$ define, as in \cite{Jan}, $n_\alpha(a):=\theta_\alpha(a)\theta_{-\alpha}(-a^{-1})\theta_\alpha(a)$. Then $n_\alpha(a)=\alpha^\vee(a)n_\alpha(1)$ is a representative of $s_\alpha$ and $n_\alpha(1)\theta_\alpha(a)=\alpha^\vee(-a^{-1})\theta_\alpha(-a)\theta_{-\alpha}(a^{-1})$. Now let $j\in J\cup I_2$. We have an isomorphism
\begin{equation*}
(u,a,z)\mapsto un_{\alpha_j}(1)\theta_{\alpha_j}(a)z:\mc U_{s_{\alpha_j}}\times\mb{G}_a\times \mc T\cdot x'\to s_{\alpha_j}\mc B\cdot x'\,,
\end{equation*}
where $\mc U_{s_{\alpha_j}}$ is the product of the $\mc U_\beta$ with $\beta$ positive $\ne\alpha_j$.
Since $D_j$ intersects $s_{\alpha_j}\mc B\cdot x'$ in the part with $a$-coordinate $0$, we only have to express $\ov\chi$ in the above coordinates. We get $\ov\chi(un_{\alpha_j}(1)\theta_{\alpha_j}(a)z)=\ov\chi(u{\alpha_j}^\vee(-a^{-1})\theta_{\alpha_j}(-a)\theta_{-\alpha_j}(a^{-1})z)= \ov\chi(\alpha_j^\vee(-a^{-1})z)=(-a)^{\la\chi,\alpha_j^\vee\ra}\ov\chi(z)$, since $\ov\chi$ is $\mc B$-semi-invariant of weight $\chi=(-\chi,\chi)$ and $\mc U_{-\alpha_j}\subseteq\mc H'$. Since $\ov\chi$ is regular and nowhere zero on $\mc T\cdot x'$ it follows that the coefficient of $D_j$ is $\la\chi,\alpha_j^\vee\ra$.

\end{proof}

\begin{rems}\label{rems.divisor}
1.\ From Proposition~\ref{prop.valuationcone} one obtains the divisor of $\ov \chi$ on any embedding $X$ with boundary divisors by $X_1,\ldots,X_n$ by simply adding the sum $\sum_i\la\chi,\nu_i\ra X_i$, where $\nu_i$ is the valuation corresponding to $X_i$. From this one obtains a general description of the class group of $X$, see \cite[Sect.~5.1]{Bri2}. This description can be made explicit by determining the images of the $\nu_i$ in $\mb Q\ot Y(T)$.\\
2.\ By the theory of spherical embeddings, the image of ${\bf X}_i$, $i\in I_1$, in the valuation cone of $\mc G/{\mc H}_{\rm a}$ is $-\varpi_i^\vee$, the dual fundamental weight corresponding to $\alpha_i$.
\end{rems}

As in Section~\ref{s.induction}, we denote by $\rho_{\mc Q}\in\mb Q\ot X(\mc T)$ the half sum of the roots of $\mc T$ in $R_u\mc Q$, and similarly for any parabolic subgroup of $\mc G$. Recall that $\rho_{\mc B}=\rho$, the half sum of the positive roots of $\mc G$. We remind the reader that the canonical bundle $\omega_G$ of $G$ is $G\times G$ linearised and $G\times G$-equivariantly trivial. This can be seen as follows. We have ${\rm Pic}^{G\times G}(G)=X(H)=X(G)$. Here the character is obtained by considering the $H$-action on the fiber of the line bundle over $x$. In our case this is the adjoint action of $G$ on the top exterior power of its Lie algebra. This action is clearly trivial since there is a $T$-fixed vector. It follows that $\mc G\times^{\mc P^-}\omega_G$ is $\mc G$-equivariantly trivial. In particular, if a $\sigma$ is a nonzero rational section of $\mc G\times^{\mc P^-}\omega_G$ with $(\sigma)$ $\mc B$-fixed, then it is a $\mc B$-semi-invariant and if $\sigma$ is $\mc B$-fixed, then it is $\mc G$-fixed (this follows from the corresponding statements for rational functions).

\begin{thm}\label{thm.canonicaldivisor}\ 
\begin{enumerate}[{\rm(i)}]
\item Let $X$ be an embedding of $\mc G/\mc H$ or $\mc G/{\mc H}_{\rm a}$ and denote its boundary divisors by $X_1,\ldots,X_n$. Then a canonical divisor of $X$ is given by
$$K_X=-\sum_iX_i-2\sum_{i\in I_1}D_i-\sum_{j\in J}\la2\rho_\mc Q,\alpha_j^\vee\ra D_j\ .$$
\item The analogue for an embedding of $\mc G/\mc H'$ or $\mc G/{\mc H}'_{\rm a}$ holds ($J$ replaced by $J\cup I_2$). Here $\mc Q'=\mc B$, so in the third sum all coefficients are equal to $2$.
\end{enumerate}
\end{thm}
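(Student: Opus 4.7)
The plan is to first establish the formula on the wonderful compactifications ${\bf X}$ of $\mc G/\mc H_{\rm a}$ and ${\bf X}'$ of $\mc G/\mc H'_{\rm a}$ (guaranteed by Proposition~\ref{prop.wfulinduction}), and then to extend to arbitrary embeddings via an equivariant toroidal resolution. The reduction from the $\mc H_{\rm a}$-case to the $\mc H$-case (and from $\mc H'_{\rm a}$ to $\mc H'$) is immediate: $\mc H_{\rm a}/\mc H$ and $\mc H'_{\rm a}/\mc H'$ are finite central subgroup schemes, and since $\omega_G$ is $G\times G$-equivariantly trivial (as observed just before the theorem), their action on the canonical bundle is trivial, so the canonical divisor descends unchanged.

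\emph{Step 1 (wonderful compactifications).} Applying Theorem~\ref{thm.wful}(ii) to ${\bf X}$ yields
$$\omega_{{\bf X}}^{-1}\cong\mc L_{{\bf X}}(2\rho_{\mc Q})\ot\mc O_{{\bf X}}\Big(\sum_{i\in I_1}{\bf X}_i\Big),$$
and analogously for ${\bf X}'$ with $\mc Q$ replaced by $\mc Q'=\mc B$. I would take the product of the $\mc B$-semi-invariant rational section $\tau_{2\rho_{\mc Q}}$ from Theorem~\ref{thm.wful}(i) with the canonical sections $\sigma_i$ of $\mc O({\bf X}_i)$ to get a rational section of $\omega_{{\bf X}}^{-1}$ with divisor $(\tau_{2\rho_{\mc Q}})+\sum_i{\bf X}_i$, so $K_{{\bf X}}=-(\tau_{2\rho_{\mc Q}})-\sum_i{\bf X}_i$. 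Proposition~\ref{prop.wfulinduction}(ii) expresses $(\tau_{2\rho_{\mc Q}})$ on ${\bf X}$ in terms of $(\tau_{2\rho_{\mc Q}}|_{{\bf Y}})=\sum_i a_i{\bf E}_i$ on the wonderful compactification ${\bf Y}$ of $G_{\rm ad}$; restricting further to the open $\mc B$-orbit of ${\bf Y}$ identifies $\tau_{2\rho_{\mc Q}}|_{{\bf Y}}$ with $\ov{2\rho}$ up to a scalar, so Proposition~\ref{prop.valuationcone}(iii) gives $a_i=\la 2\rho,\alpha_i^\vee\ra=2$. Combining,
$$K_{{\bf X}}=-\sum_{i\in I_1}{\bf X}_i-2\sum_{i\in I_1}{\bf D}_i-\sum_{j\in J}\la 2\rho_{\mc Q},\alpha_j^\vee\ra\,{\bf D}_j,$$
which is the formula of (i) on ${\bf X}$. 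The same argument on ${\bf X}'$ yields (ii) on ${\bf X}'$, with $J$ replaced by $J\cup I_2$ since $\mc K'=\mc L$ (Lemma~\ref{lem.otherPandH}).

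\emph{Step 2 (general embeddings).} By the corollary to Proposition~\ref{prop.wfulinduction} any embedding $X$ admits an equivariant toroidal resolution $\varphi\colon\tilde X\to X$, so it suffices to prove the formula for smooth complete toroidal embeddings $\tilde X$ dominating ${\bf X}$. On such a $\tilde X$ the adjunction argument underlying Theorem~\ref{thm.wful}(ii) goes through using only the local structure theorem (Proposition~\ref{prop.induction}(i)) and the injectivity of $\Pic(\tilde X)\to\Pic(\mc G/\mc Q^-)$, giving $\omega_{\tilde X}^{-1}\cong\mc L_{\tilde X}(2\rho_{\mc Q})\ot\mc O_{\tilde X}(\sum_k\tilde X_k)$. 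The same section computation as in Step 1 then produces $K_{\tilde X}$ in the asserted form. The formula descends to $X$ because the $\mc B$-stable non-$\mc G$-stable divisors of $X$ and $\tilde X$ are in bijection via $\varphi$ (Proposition~\ref{prop.induction}(iv)) with the same coefficients, and the $\mc G$-stable boundary divisors of $X$ agree with those of $\tilde X$ outside a codimension-$\ge 2$ exceptional locus, both carrying coefficient $-1$.

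\emph{Main obstacle.} The delicate point is the identification $(\tau_{2\rho_{\mc Q}}|_{{\bf Y}})=2\sum_i{\bf E}_i$ in Step 1, with no contribution from any ${\bf Y}_j$. One has to verify on the toric chart $\ov{T\cdot y}^0\cong\mb A^r$ of ${\bf Y}$ (Remark~\ref{rem.wful}) that after trivialising $\mc L_{{\bf Y}}(2\rho_{\mc Q})$ the restriction of $\tau_{2\rho_{\mc Q}}$ is a $T$-semi-invariant regular function of weight $2\rho$ that is nowhere-vanishing on the open torus and has no zeros along the coordinate hyperplanes corresponding to the ${\bf Y}_j$; tracking the sign conventions of Remark~\ref{rem.danger} through this calculation requires care. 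A secondary difficulty in Step 2 is verifying the injectivity of $\Pic(\tilde X)\to\Pic(\mc G/\mc Q^-)$ for the chosen toroidal resolutions, which must be arranged via a suitable choice of smooth projective refinement of the fan.
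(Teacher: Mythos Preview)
Your Step~1 is a legitimate alternative route on the wonderful compactifications, essentially combining Theorem~\ref{thm.wful}(ii), Proposition~\ref{prop.wfulinduction}(ii), and the Picard computations that appear (logically independently) in Proposition~\ref{prop.wfulpicard}(i),(ii). The identification $a_i=2$ is correct, though your justification via ``$\ov{2\rho}$'' is imprecise: what you actually need is $\la 2\rho_{\mc Q},\alpha_{i1}^\vee\ra=\la 2\rho_{\mc Q},\alpha_{i2}^\vee\ra=2$, which holds because $\rho_{\mc Q}=\rho-\rho_{\mc M}$ and $\rho_{\mc M}$ pairs trivially with the $\alpha_{i1}^\vee,\alpha_{i2}^\vee$ (the simple factors of $\mc M$ and $\mc L$ being orthogonal).

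Step~2, however, has a genuine gap. The adjunction argument behind Theorem~\ref{thm.wful}(ii) uses that the unique closed orbit of the wonderful compactification is $\mc G/\mc Q^-$, so that restricting $\omega_{\bf X}^{-1}(-\sum{\bf X}_i)$ there gives $\omega_{\mc G/\mc Q^-}^{-1}=\mc L(2\rho_{\mc Q})$ and injectivity of $\Pic({\bf X})\to\Pic(\mc G/\mc Q^-)$ finishes. For a general smooth complete toroidal embedding $\tilde X$ the closed orbits are not $\mc G/\mc Q^-$ (they depend on the maximal cones of the fan), so there is no map $\Pic(\tilde X)\to\Pic(\mc G/\mc Q^-)$ given by restriction, and the adjunction step simply does not make sense. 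Your ``secondary difficulty'' is therefore not a matter of choosing a good refinement of the fan but an obstruction to the whole strategy. One can salvage the passage from ${\bf X}$ to $\tilde X$ by computing the relative canonical divisor $K_{\tilde X/{\bf X}}$ via the toric chart, but that is exactly the computation your argument was trying to avoid.

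The paper takes a different and more direct route that sidesteps this issue entirely: it never uses the wonderful compactification. After reducing to smooth toroidal $X$, it writes down an explicit rational section $\sigma=\theta_{R_u\mc P}\wedge\theta_{U^-}\wedge\theta_T\wedge\theta_U$ of $\omega_X$ on the open cell $R_u\mc P\times U^-\times T\times U\cong\mc B\cdot x$ and computes its divisor. The coefficients $-1$ along the $X_i$ come from the local structure theorem and the standard toric formula $K_{\text{toric}}=-\sum(\text{boundary})$ on $\ov{\mc T\cdot x}^0$. For the remaining coefficients one uses the factorisation $\omega_{\mc G/\mc H}\cong pr^*\omega_{\mc G/\mc P^-}\ot(\mc G\times^{\mc P^-}\omega_G)$: the second factor is $\mc G$-equivariantly trivial, and comparing $\eta=\theta_{U^-}\wedge\theta_T\wedge\theta_U$ with a $\mc G$-fixed trivialising section $\eta_1$ shows $\eta=\pm\,\ov{-2\rho_B}\,\eta_1$; then Proposition~\ref{prop.valuationcone} gives the $D_i$-coefficients, and the known canonical divisor of $\mc G/\mc P^-$ gives the $D_j$-coefficients, with $\rho_{\mc Q}=\rho_{\mc P}+\rho_B$ assembling them into the stated form. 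This works uniformly on every smooth toroidal embedding, which is what your Step~2 needed.
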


\begin{proof}
As in the proof of \cite[Prop.~6.2.6]{BriKu} we may assume that $X$ is smooth and toroidal. Let $\theta_{T}$ be the volume form (= nowhere zero section of canonical sheaf) $\frac{dt_1\wedge\cdots\wedge dt_m}{t_1\cdots t_m}$ of $T$ and let $\theta_U$, $\theta_{U^-}$ and $\theta_{R_u\mc P}$ be the volume forms of $U$, $U^-$ and $R_u\mc P$ respectively. We will compute the divisor $(\sigma)$ of the rational section $\sigma=\theta_{R_u\mc P}\wedge\theta_{U^-}\wedge\theta_T\wedge\theta_U$ of $\varpi_X$. Here the products have to be interpreted using the isomorphism $R_u\mc P\times U^-\times T\times U\cong \mc B\cdot x$ given by the group multiplication (in the given order) and the action. By the local structure theorem for $X$ and the theory of toric varieties, the coefficients of the $X_i$ in $(\sigma)$ are all $-1$.

To compute the other coefficients we may restrict to $\mc G/\mc H$. We have an isomorphism $\varpi_{\mc G/\mc H}\cong pr^*\varpi_{\mc G/\mc P^-}\ot\varpi_{\mc G/\mc H:\mc G/\mc P^-}$. Under this isomorphism we have $\sigma=\theta_{R_u\mc P}\ot\eta$, where $\eta=\theta_{U^-}\wedge\theta_T\wedge\theta_U$. We have $\varpi_{\mc G/\mc H:\mc G/\mc P^-}\cong \mc G\times^{\mc P^-}\omega_G$ which is $\mc G$-equivariantly trivial.
Now put $\eta_1=\theta_T\wedge\theta_{U^-}\wedge\theta_U$ where the product is to be interpreted using the isomorphism $R_u\mc P\times T\times U^-\times U\cong \mc B\cdot x$ (so this section is independent of the $R_u\mc P$-coordinate). Then $\eta_1$ is regular and nowhere zero on $\mc B\cdot x$ and it is $\mc T$-fixed, so it is $\mc B$-fixed and therefore $\mc G$-fixed and nowhere zero. Now $\eta=\pm\ov{-2\rho_B}\,\eta_1$, where $\rho_B$ is the half sum of the positive roots of $G$. 
Furthermore, $(\theta_{R_u\mc P})=-\sum_{j\in J}\la2\rho_\mc P,\alpha_j^\vee\ra D_j$. So the assertion in (i) follows from Proposition~\ref{prop.valuationcone} and the fact that $\rho_\mc Q-\rho_\mc P=(-\rho_B,\rho_B)=\rho_B\in\mb Q\ot X(\mc T)$, the half sum of the positive roots from $\mc L$. Assertion (ii) follows, because $\mc G,\mc H',G$ and $\mc G,{\mc H}_{\rm a}',G_{\rm ad}$ also satisfy the hypotheses made at the beginning of this section.
\end{proof}

\begin{rem}
If $X$ has no boundary divisors, then ${\rm Cl}(X)={\rm Cl}(\mc G/\mc H)$. In this case the above proof shows that the canonical sheaf of $X$ is $\mc G$-equivariantly isomorphic to the pull-back of the canonical sheaf of $\mc G/\mc P^-$.
\end{rem}

Let $\tilde G$ be the simply connected cover of the derived group of $G$ and let $\tilde T$ and $\tilde B$ be the maximal torus and Borel subgroup of $\tilde G$ corresponding to $T$ and $B$. Define $\tilde{\mc G}$, $\tilde{\mc T}$ and $\tilde{\mc B}$ similarly. 
Recall from Section~\ref{s.induction} that ${\rm Pic}(\mc G/\mc Q^-)={\rm Pic}^{\tilde G}(\mc G/\mc Q^-)$ embeds in $X(\tilde{\mc T})$. Note that the composite $\tilde{\mc T}\to T\times T\to T_{\rm ad}\times T_{\rm ad}$ is surjective, so similar remarks apply as in the case of the surjection $\mc T\to T\times T$. Furthermore, we embed again $X(T_{\rm ad})$ in $X(T_{\rm ad}\times T_{\rm ad})\subseteq X(\tilde{\mc T})$ by $\chi\mapsto(-\chi,\chi)$.

\begin{prop}\label{prop.wfulpicard}\
\begin{enumerate}[{\rm(i)}]
\item The images of ${\bf X}_i$, ${\bf D}_i$, $i\in I_1$, and ${\bf D}_j$, $j\in J$, in ${\rm Pic}({\bf X})$ are $\alpha_{i1}+\alpha_{i2}$, $\varpi_{i1}+\varpi_{i2}$ and $\varpi_j$. The group ${\rm Pic}({\bf X})$ embeds in ${\rm Pic}(\mc G/\mc Q^-)$ as the $\chi$ with $\la\chi,\alpha_{i1}^\vee\ra=\la\chi,\alpha_{i2}^\vee\ra$ for all $i\in I_1$.
\item For the section $\tau_\chi$, $\chi\in{\rm Pic}({\bf X})$, of Theorem~\ref{thm.wful} we have
$$(\tau_\chi)=\sum_{i\in I_1}\la\chi,\alpha_{i1}^\vee\ra{\bf D}_i+\sum_{j\in J}\la\chi,\alpha_j^\vee\ra {\bf D}_j\ .$$
\item The analogues of (i) and (ii) for ${\bf X}'$ ($J$ replaced by $J\cup I_2$) hold.
\item The wonderful compactifications ${\bf X}'$ and $\bf X$ are $\mc B$-canonically Frobenius split compatible with the $\mc G$-orbit closures and the $\mc B$-stable prime divisors and $\mc B^-$-stable prime divisors that are not $\mc G$-stable. The splitting of ${\bf X}'$ is given by Theorem~\ref{thm.wful}(iii).
\end{enumerate}
\end{prop}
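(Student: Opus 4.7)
The plan is to derive (i)--(iii) as applications of Theorem~\ref{thm.wful} and Proposition~\ref{prop.wfulinduction} combined with Proposition~\ref{prop.valuationcone}, and to treat (iv) separately for ${\bf X}'$ (via Theorem~\ref{thm.wful}(iii) applied directly) and for ${\bf X}$ (via the Corollary to Proposition~\ref{prop.toroidal}).

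For (i), Proposition~\ref{prop.wfulinduction}(i) already gives that ${\rm Pic}({\bf X})$ is freely generated by the ${\bf D}_i$ and ${\bf D}_j$ and injects into ${\rm Pic}(\mc G/\mc Q^-)$. The character of each generator is pinned down by its canonical section: from the divisor formula of (ii) (proved in parallel via Proposition~\ref{prop.wfulinduction}(ii)) together with membership in ${\rm Pic}(\mc G/\mc Q^-)$, the class $\chi$ of ${\bf D}_i$ must satisfy $\la\chi,\alpha_{i'k}^\vee\ra=\delta_{ii'}$ for $k\in\{1,2\}$ and $\la\chi,\alpha_j^\vee\ra=0$ for $j\in J\cup I_2$, forcing $\chi=\varpi_{i1}+\varpi_{i2}$; likewise ${\bf D}_j\leftrightarrow\varpi_j$ by the standard Schubert identification pulled back from $\mc G/\mc P^-$. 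The description of ${\rm Pic}({\bf X})$ as the sublattice $\{\chi:\la\chi,\alpha_{i1}^\vee\ra=\la\chi,\alpha_{i2}^\vee\ra\}$ then follows by a rank count ($|I_1|+|J|$ on each side) together with the observation that both $\varpi_{i1}+\varpi_{i2}$ and $\varpi_j$ satisfy the condition. For ${\bf X}_i\leftrightarrow\alpha_{i1}+\alpha_{i2}$, substitute $\chi=\alpha_i\in X(T_{\rm ad})$ (whose image in $X(\mc T)$ is $\alpha_{i1}+\alpha_{i2}$) into the extended divisor formula on ${\bf X}$; using Remark~\ref{rems.divisor}.2 (image of ${\bf X}_i$ in the valuation cone is $-\varpi_i^\vee$) and $(\ov\chi)=0$ in the class group expresses ${\bf X}_i$ as the required combination of the generators. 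For (ii), Proposition~\ref{prop.wfulinduction}(ii) reduces the formula on ${\bf X}$ to the analogous formula on ${\bf Y}$, namely $(\tau_\chi)|_{\bf Y}=\sum_i\la\chi,\alpha_i^\vee\ra{\bf E}_i$, which comes from applying the same reasoning to ${\bf Y}$ itself (cf.\ \cite[Ch.~6]{BriKu}); the equalities $\la\chi,\alpha_i^\vee\ra=\la\chi,\alpha_{i1}^\vee\ra=\la\chi,\alpha_{i2}^\vee\ra$ hold by (i). For (iii), Lemma~\ref{lem.otherPandH} ensures that the primed data satisfy (A1)--(A3) with $\mc L'=\mc K'=\mc L$ (so $I_2$ is absorbed into $J$) and $\mc Q'=\mc B$, and the proofs of (i) and (ii) apply verbatim.

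For (iv) on ${\bf X}'$, the hypotheses of Theorem~\ref{thm.wful}(iii) are all satisfied: $\mc Q'=\mc B$ by Lemma~\ref{lem.otherPandH}; $\rho\in{\rm Pic}({\bf X}')$ by (iii), since $\la\rho,\alpha_{i1}^\vee\ra=1=\la\rho,\alpha_{i2}^\vee\ra$; and $(\tau_\rho)=\sum_{i\in I_1}{\bf D}_i'+\sum_{j\in J\cup I_2}{\bf D}_j'$ is effective by (iii), so $\tau_\rho$ is global. The resulting $\mc B$-canonical splitting $(\tau_\rho^-\tau_\rho\sigma_1\cdots\sigma_r)^{p-1}$ is compatibly split with the ${\bf X}_i'$ (via the $\sigma_i$) and hence with all $\mc G$-orbit closures, with each ${\bf D}_i'$ and ${\bf D}_j'$ (via $\tau_\rho$), and with their $\mc B^-$-analogues (via $\tau_\rho^-=w_0\cdot\tau_\rho$).

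For ${\bf X}$, Theorem~\ref{thm.wful}(iii) does not apply directly since $\rho\notin{\rm Pic}({\bf X})$ in general ($\la\rho,\alpha_j^\vee\ra=1\ne 0$ for $j\in I_2$). Instead I would apply Theorem~\ref{thm.wful}(iii) to ${\bf Y}$ itself, with $G\times G$ in the role of $\mc G$ and $B^-\times B$ as Borel, obtaining a canonical splitting of ${\bf Y}$ compatible with all $G\times G$-orbit closures, colors, and opposite colors; by Mathieu's theorem (as in the Corollary to Proposition~\ref{prop.toroidal}) this lifts to a canonical splitting on $\mc G\times^{\mc B^-}{\bf Y}$ compatible with the pullbacks of the above and with the Schubert divisors of $\mc G/\mc B^-$; pushforward along the rational morphism $\mc G\times^{\mc B^-}{\bf Y}\to{\bf X}$ of Lemma~\ref{lem.flagbundle} then yields a $\mc B$-canonical splitting of ${\bf X}$. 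The main obstacle I anticipate is verifying that this induced splitting is compatible precisely with the $\mc G$-orbit closures and the prescribed $\mc B$- and $\mc B^-$-stable prime divisors of ${\bf X}$; this requires carefully matching the compatibly split divisors on $\mc G\times^{\mc B^-}{\bf Y}$ with those on ${\bf X}$, using Proposition~\ref{prop.induction}(iv) (identification of $\mc B$-stable divisors after parabolic induction) and the analogous $\mc B^-$ statement.
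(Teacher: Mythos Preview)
Your treatment of (i)--(iii) is organised differently from the paper but is broadly workable. The paper computes the weights directly: for ${\bf X}_i$ it divides the canonical section by the local equation $\ov{-\alpha_i}$ on ${\bf X}_0$ to read off the weight $\alpha_{i1}+\alpha_{i2}$; for ${\bf D}_i$ it picks $\mu\in X(T_{\rm ad})$ with $\la\mu,\alpha_{i'}^\vee\ra=l\delta_{ii'}$, compares the canonical section of $l{\bf D}_i+\sum_j\la\mu,\alpha_j^\vee\ra{\bf D}_j$ with $\ov\mu$ on $\mc G/\mc H_{\rm a}$ via Proposition~\ref{prop.valuationcone}(iii), and concludes $\tau_i$ has weight $\varpi_{i1}+\varpi_{i2}$. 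Then (ii) falls out since $\chi\mapsto(\tau_\chi)$ is a homomorphism and the formula holds on generators. Your route through Proposition~\ref{prop.wfulinduction}(ii) is legitimate but you should note that its hypothesis (no ${\bf D}_i$ contains a ${\bf D}_\alpha\cap\mc G/\mc Q^-$) needs checking, and your deduction that $\la\chi,\alpha_{i2}^\vee\ra=\delta_{ii'}$ for the class $\chi$ of ${\bf D}_i$ is not immediate from the divisor formula alone---it really uses the first sentence of Proposition~\ref{prop.wfulinduction}(ii), which identifies the image of ${\rm Pic}({\bf Y})$, together with the known answer on ${\bf Y}$.

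For (iv) on ${\bf X}$ you are missing the short route. You already have the splitting on ${\bf X}'$ with all the required compatibilities. There is a natural morphism ${\bf X}'=\mc G\times^{\mc P^{'-}}{\bf Y}\to\mc G\times^{\mc P^-}{\bf Y}={\bf X}$, which by Lemma~\ref{lem.flagbundle} is projective and rational. Push the canonical splitting forward along this morphism using \cite[Ex.~4.1.E.3]{BriKu}: the result is a $\mc B$-canonical splitting of ${\bf X}$, and the images of the compatibly split subvarieties of ${\bf X}'$ are exactly the $\mc G$-orbit closures and the $\mc B$- and $\mc B^-$-stable prime divisors of ${\bf X}$. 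Your detour through ${\bf Y}$ and $\mc G\times^{\mc B^-}{\bf Y}$ is unnecessary, and the obstacle you anticipate---matching the compatibly split divisors after Mathieu induction---is genuinely awkward, whereas the ${\bf X}'\to{\bf X}$ pushforward handles it for free.
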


\begin{proof}
(i).\ Note that, by Proposition~\ref{prop.wfulinduction} ${\rm Pic}({\bf X})$ embeds in ${\rm Pic}(\mc G/\mc Q^-)$ and therefore in $X(\tilde{\mc T})$. To determine the image of ${\bf X}_i$ we argue as in \cite[Cor.~8.2]{DeCP}. The canonical section $s_i$ of $\mc L_{\bf X}({\bf X}_i)$ is $\tilde{\mc G}$-fixed. Since the restriction of $\mc L_{\bf X}({\bf X}_i)$ to ${\bf X}_0$ is trivial and $\ov{-\alpha_i}$ is an equation for ${\bf X}_i$ on ${\bf X}_0$ (see Remark~\ref{rem.wful}), $u_i:=s_i/\,\ov{-\alpha_i}$ is a rational section of $\mc L_{\bf X}({\bf X}_i)$ which is nowhere zero on ${\bf X}_0$. In particular, it restricts to a nonzero rational section on $\mc G/\mc Q^-$. Since $\ov{-\alpha_i}$ is $\tilde{\mc B}$-semi-invariant with weight $(\alpha_i,-\alpha_i)$, it follows that $u_i$ is $\tilde{\mc B}$-semi-invariant with weight $(-\alpha_i,\alpha_i)=\alpha_{i1}+\alpha_{i2}$. The image of ${\bf D}_j$, $j\in J$, is $\varpi_j$, since it is the pull-back of the corresponding prime divisor on $\mc G/\mc P^-$.

Now let $i\in I_1$ and choose $\mu\in X(T_{\rm ad})$ with $\la\mu,\alpha_i^\vee\ra=l>0$ and $\la\mu,\alpha_{i'}^\vee\ra=0$ for all $i'\in I_1\sm\{i\}$. Then $\mu=(-\mu,\mu)=l(-\varpi_i,\varpi_i)+\sum_{j\in J}\la\mu,\alpha_j^\vee\ra\varpi_j$. Put ${\bf E}=l{\bf D}_i+\sum_{j\in J}\la\mu,\alpha_j^\vee\ra{\bf D}_j$. Then $(\ov\mu)=\bf E$ on $\mc G/{\mc H}_{\rm a}$ by Proposition~\ref{prop.valuationcone}(iii), so the restriction of $\mc L_{\bf X}({\bf E})$ to $\mc G/{\mc H}_{\rm a}$ is ($\tilde G$-equivariantly) trivial. Now let $\tau_i$ be the canonical section of ${\bf D}_i$ and for each $j\in J$, let $\tau_j$ be the canonical section of ${\bf D}_j$. Then the restriction of the section $\tau_i^l\prod_{j\in J}\tau_j^{\la\mu,\alpha_j^\vee\ra}$ of $\mc L_{\bf X}({\bf E})$ to $\mc G/{\mc H}_{\rm a}$ must be a scalar multiple of $(\ov\mu)$, since, by Proposition~\ref{prop.valuationcone}(iii), a $\tilde{\mc B}$-semi-invariant function on $\mc G/{\mc H}_{\rm a}$ is determined up to a scalar multiple by its divisor. So $\tau_i$ has weight $(-\varpi_i,\varpi_i)=\varpi_{i1}+\varpi_{i2}$. The final assertion is now clear.\\
(ii).\ By (i), the formula is true for $\chi=\varpi_{i1}+\varpi_{i2}$, $i\in I_1$, and $\chi=\varpi_j$, $j\in J$. But then it is true for all $\chi\in{\rm Pic}({\bf X})$, since $\chi\mapsto(\tau_\chi)$ is a homomorphism.\\
(iii).\ This follows, because $\mc G,{\mc H}_{\rm a}',G_{\rm ad}$ also satisfy the hypotheses made at the beginning of this section.\\
(iv).\ Note that $\mc Q'=\mc B$ and that $\rho\in{\rm Pic}({\bf X}')$. So, by Theorem~\ref{thm.wful}, the $(p-1)$-th power of the global section $\tau_{\rho}^{'-}\tau_{\rho}'\prod_{i\in I_1}\sigma'_i$ of $\omega_{{\bf X}'}$ is a $\mc B$-canonical Frobenius splitting of ${\bf X}'$. Recall that $\tau_{\rho}^{'-}=w_0\cdot\tau'_{\rho}$, where $w_0$ is the longest element of the Weyl group of $\mc G$. By (iii) its divisor is the union of the ${\bf X}'_i$, the ${\bf D}'_i$ and $w_0{\bf D}'_i$, $i\in I_1$, and the ${\bf D}'_j$ and $w_0{\bf D}'_j$, $j\in J\cup I_2$. So the statement about the compatible splitting follows from the second assertion of \cite[Prop.~1.3.11]{BriKu}, \cite[Prop.~1.2.1]{BriKu} and the fact that the $\mc G$-orbit closures are transversal intersections of the ${\bf X}'_i$. The assertion about $\bf X$ follows by considering the morphism ${\bf X}'=\mc G\times^{\mc P^{'-}}{\bf Y}\to\mc G\times^{\mc P^-}{\bf Y}={\bf X}$ and applying Lemma~\ref{lem.flagbundle} and \cite[Ex.~4.1.E.3]{BriKu}.
\end{proof}

A {\it rational resolution} is a resolution $\varphi:\tilde X\to X$ which is a rational morphism (see the definition before Lemma~\ref{lem.vanishing}) and satisfies $R^i\varphi_*\omega_X=0$ for all $i>0$. The argument in the proof of (ii) below was also used in \cite{MT1} and \cite{MT2}.

\begin{thm}\label{thm.Frobsplratres}\ 
\begin{enumerate}[{\rm(i)}]
\item All embeddings of  $\mc G/\mc H$ and $\mc G'/\mc H'$ are $\mc B$-canonically Frobenius split compatible with the $\mc G$-orbit closures and the $\mc B$-stable prime divisors and $\mc B^-$-stable prime divisors that are not $\mc G$-stable. If $X'$ is a smooth complete toroidal $\mc G'/\mc H'$-embedding with boundary divisors $X'_i$, then the $\mc G$-linearised line bundle $\omega_{X'}^{-1}\ot\mc L_{X'}(-\sum_iX'_i)$ is the pull-back of $\mc L_{{\bf X}'}(2\rho)$ and the splitting is given by the $(p-1)$-th power of the global section of $\omega_{X'}^{-1}$ which is the pull-back of $\tau^{'-}_\rho\tau'_\rho$ multiplied with the canonical sections of the $\mc L_{X'}(X'_i)$.
\item Let $X$ be a $\mc G/\mc H$-embedding and let $\varphi:\tilde X\to X$ be a resolution as in (i) of the corollary to Proposition~\ref{prop.wfulinduction}, then $\varphi$ is a rational resolution.
\end{enumerate}
\end{thm}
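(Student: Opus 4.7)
For part (i), my plan starts with a smooth complete toroidal $\mc G'/\mc H'$-embedding $X'$, which admits a $\mc G$-equivariant morphism $\pi':X'\to{\bf X}'$ by the universal property of the wonderful compactification. Combining Theorem~\ref{thm.canonicaldivisor}(ii) (where $\mc Q'=\mc B$ forces every coefficient $\la 2\rho_{\mc Q'},\alpha_j^\vee\ra$ to equal $2$) with Proposition~\ref{prop.wfulpicard}(iii) for the divisor of a rational section $\tau'_{2\rho}$ of $\mc L_{{\bf X}'}(2\rho)$, I see that the line bundle $\omega_{X'}^{-1}\ot\mc L_{X'}(-\sum_i X'_i)$ has the same divisor on $X'$ as $\pi'^*\mc L_{{\bf X}'}(2\rho)$ and is therefore $\mc G$-equivariantly isomorphic to it. Since $\rho\in\mathrm{Pic}({\bf X}')$, $\tau'^-_\rho\tau'_\rho$ is a global section of $\mc L_{{\bf X}'}(2\rho)$ by Theorem~\ref{thm.wful}(iii); the candidate splitting is then the $(p-1)$-th power of the global section $s\in H^0(X',\omega_{X'}^{-1})$ obtained by multiplying $\pi'^*(\tau'^-_\rho\tau'_\rho)$ with the canonical sections $\sigma_{X'_i}$ of $\mc L_{X'}(X'_i)$.

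I would verify that $s^{p-1}$ is a Frobenius splitting by restricting to the open $\mc G/\mc H'$-orbit, where $\pi'$ is an isomorphism and $s^{p-1}$ coincides with the known canonical splitting of ${\bf X}'$ from Proposition~\ref{prop.wfulpicard}(iv); since a section giving a local splitting generically gives a global splitting, this suffices. For $\mc B$-canonicity I would invoke the Mathieu--Steinberg argument of Theorem~\ref{thm.wful}(iii): $\pi'^*(\tau'^-_\rho\tau'_\rho)$ arises as the image of $f^-\ot f^+\in\mathrm{St}\ot\mathrm{St}$ under a $\tilde{\mc G}$-module homomorphism into $H^0(X',\omega_{X'}^{-1}(-\sum_i X'_i))$. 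For compatibility, the divisor of $s$ contains all boundary divisors $X'_i$ (from the $\sigma_{X'_i}$) together with all the $D'_i$, $D'_j$ and their $w_0$-translates (from the divisors of $\pi'^*\tau'_\rho$ and $\pi'^*\tau'^-_\rho$ computed via Proposition~\ref{prop.wfulpicard}(iii)), while the $\mc G$-orbit closures in a smooth toroidal embedding are transversal intersections of boundary divisors, so \cite[Prop.~1.2.1]{BriKu} transfers compatibility to them as well. For an arbitrary $\mc G'/\mc H'$-embedding I would use Sumihiro's theorem and the corollary to Proposition~\ref{prop.wfulinduction} to embed it into a smooth complete toroidal one and restrict the splitting; and for a $\mc G/\mc H$-embedding I would push forward compatible splittings along the $\mc G$-equivariant morphism induced by $\mc H'\subseteq\mc H$, using the flag-bundle style arguments of Lemma~\ref{lem.flagbundle} to check that higher direct images of structure sheaves vanish.

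For part (ii), the two requirements of a rational resolution separate cleanly. The vanishing $R^i\varphi_*\mc O_{\tilde X}=0$ for $i>0$ is immediate from Lemma~\ref{lem.vanishing}(iii), since by part (i) every toroidal $\mc G/\mc H$-embedding is Frobenius split compatibly with the complement of the open $\mc B$-orbit. The deeper vanishing $R^i\varphi_*\omega_{\tilde X}=0$ I would handle by completing $\tilde X$ to a smooth complete toroidal $\bar{\tilde X}$ via the corollary to Proposition~\ref{prop.wfulinduction}, completing $X$ to a projective $\mc G/\mc H$-embedding $\bar X$ (Sumihiro), and extending $\varphi$ to a projective $\bar\varphi:\bar{\tilde X}\to\bar X$. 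Since $\bar{\tilde X}$ is canonically Frobenius split by a $(p-1)$-th power by part (i), the Mehta--Van der Kallen theorem~\cite{MvdK} yields Kodaira-type vanishing $H^i(\bar{\tilde X},\omega_{\bar{\tilde X}}\ot L)=0$ for $i>0$ and $L$ ample on $\bar{\tilde X}$; a Kempf/Leray argument with a $\bar\varphi$-ample line bundle and Serre vanishing on $\bar X$ then delivers the relative vanishing $R^i\bar\varphi_*\omega_{\bar{\tilde X}}=0$, which restricts to the required statement on $X$. The main obstacle is precisely this last $\omega$-vanishing: the rationality of $\varphi$ is essentially formal once part (i) is in hand, but Grauert--Riemenschneider-style vanishing in positive characteristic genuinely uses the $(p-1)$-th power structure of the canonical splitting, so the explicit formula of part (i) --- not just the mere existence of some splitting --- is really needed.
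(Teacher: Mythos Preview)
Your treatment of part~(i) is essentially the paper's argument: identify $\omega_{X'}^{-1}\otimes\mc L_{X'}(-\sum_iX'_i)$ with $\pi'^*\mc L_{{\bf X}'}(2\rho)$ by comparing divisors and weights, then check the $(p-1)$-th power is a splitting and is canonical as in \cite{Rit2} or \cite[Thm.~6.2.7]{BriKu}. One wording issue: for an arbitrary embedding you do not ``embed it into a smooth complete toroidal one and restrict''; you take a toroidal resolution (the corollary to Proposition~\ref{prop.wfulinduction}) and \emph{push forward} the splitting via \cite[Ex.~4.1.E.3]{BriKu}. But this is cosmetic.

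Part~(ii) has a genuine gap. You write ``since $\bar{\tilde X}$ is canonically Frobenius split by a $(p-1)$-th power by part~(i)'', but $\bar{\tilde X}$ is a $\mc G/\mc H$-embedding, and part~(i) only supplies the explicit $(p-1)$-th power formula for smooth complete toroidal $\mc G/\mc H'$-embeddings. The whole point of introducing $\mc H'$ is that the closed orbit in ${\bf X}$ is $\mc G/\mc Q^-$ with $\mc Q\supsetneq\mc B$ whenever $I_2\ne\emptyset$, so Theorem~\ref{thm.wful}(iii) does not apply to ${\bf X}$ directly; the splitting of a $\mc G/\mc H$-embedding is obtained by push-forward from a $\mc G/\mc H'$-embedding and need not be a $(p-1)$-th power. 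Without that structure, the Mehta--Van der Kallen theorem is unavailable on $\bar{\tilde X}$, and your Kempf/Leray deduction of $R^i\varphi_*\omega_{\tilde X}=0$ does not go through.

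The paper fixes this by staying on the $\mc H'$-side. Writing $\tilde X=\mc G\times^{\mc P^-}\tilde Y$ and $\tilde X'=\mc G\times^{\mc P'^-}\tilde Y$ with $\psi:\tilde X'\to\tilde X$ the canonical map (a flag bundle of relative dimension $d$), Lemmas~\ref{lem.flagbundle} and~\ref{lem.duality} give $R^d\psi_*\omega_{\tilde X'}=\omega_{\tilde X}$ and all other $R^j\psi_*\omega_{\tilde X'}=0$; the Grothendieck spectral sequence for $(\varphi\circ\psi)_*$ then collapses to $R^i\varphi_*\omega_{\tilde X}\cong R^{i+d}(\varphi\circ\psi)_*\omega_{\tilde X'}$. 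Since $\varphi$ is an isomorphism over the open $\mc G$-orbit, this sheaf is supported on the boundary $E\subset X$, and $(\varphi\circ\psi)^{-1}(E)$ is contained in the support of the anticanonical divisor of $\tilde X'$ --- which is exactly the zero divisor of the $(p-1)$-th root of the splitting from part~(i). Now Mehta--Van der Kallen applies on $\tilde X'$ and forces the sheaf to vanish. The passage to $\tilde X'$ via duality, not a completion of $\tilde X$, is the missing idea.
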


\begin{proof}
(i).\ By \cite[Ex.~4.1.E.3]{BriKu} and (i) of the corollary to Proposition~\ref{prop.wfulinduction}, we may assume that the embedding is smooth complete (projective, in fact) and toroidal. First we consider such an embedding $X$ of $\mc G/\mc H$. By Proposition~\ref{prop.toroidal}(ii) $X=\mc G\times^{\mc P^-}Y$, where $Y$ is a smooth complete toroidal $G$-embedding. Then $X'=\mc G\times^{\mc P^{'-}}Y$ is a smooth complete toroidal $\mc G/\mc H'$-embedding. By \cite[Ex.~4.1.E.3]{BriKu} applied to the morphism $X'\to X$ we are now reduced to $\mc G/\mc H'$-embeddings.

So let $X'$ be a smooth complete toroidal $\mc G/\mc H'$-embedding with boundary divisors $X'_i$. We give each $\mc L_{X'}(X'_i)$ the $\mc G$-linearisation for which the canonical section is $\mc G$-fixed. If we multiply the section from the proof of Theorem~\ref{thm.canonicaldivisor} with the canonical sections of the $\mc L_{X'}(X'_i)$, then we obtain a $\mc B$-semi-invariant rational section of $\omega_{X'}\ot\mc L_{X'}(\sum_iX'_i)$ which has weight $-2\rho$ and divisor minus twice the sum of the $\mc B$-stable prime divisors. By Proposition~\ref{prop.wfulpicard} and Proposition~\ref{prop.induction}(iv) the pull-back of $\tau'_{-2\rho}$ is a section of the pull-back of $\mc L_{{\bf X}'}(-2\rho)$ with the same properties. On a complete variety a rational section is determined up to scalar multiples by its divisor. Furthermore, any two $\mc G$-linearisations differ by a character. It follows that $\omega_{X'}\ot\mc L_{X'}(\sum_iX'_i)$ is $\mc G$-equivariantly isomorphic to the pull-back of $\mc L_{{\bf X}'}(-2\rho)$.
The rest of the proof is the same as \cite[Thm.~2]{Rit2}, see also \cite[Thm.~6.2.7]{BriKu}.\\
(ii).\
By (i) and Lemma~\ref{lem.vanishing} it suffices to that $R^i\varphi_*\omega_{\tilde X}=0$ for all $i>0$. Since $\tilde X$ is toroidal, we have by Proposition~\ref{prop.toroidal}(ii) that $\tilde X=\mc G\times^{\mc P^-}\tilde Y$, where $\tilde Y$ is a toroidal $G$-embedding. Put $\tilde X'=\mc G\times^{\mc P^{'-}}\tilde Y$ and let $\psi:\tilde X'\to\tilde X$ be the canonical morphism. By Lemma's~\ref{lem.flagbundle} and~\ref{lem.duality}, the Grothendieck spectral sequence for $\varphi_*$, $\psi_*$ and $\omega_{\tilde X'}$ collapses and we obtain $(R^i\varphi_*)(\omega_{\tilde X})=(R^i\varphi_*)(R^d\psi_*\omega_{\tilde X'})=R^{i+d}(\varphi\circ\psi)_*(\omega_{\tilde X'})$. So it suffices to show that $R^{i+d}(\varphi\circ\psi)_*(\omega_{\tilde X'})$ vanishes for $i>0$. By the above equality and since $\varphi$ is the identity on the open $\mc G$-orbit, this sheaf vanishes on the complement $E$ of the open $\mc G$-orbit in $X$. Now the result follows from (i)
and the Mehta-Van der Kallen Theorem (see \cite{MvdK} or \cite[Thm.~1.2.12]{BriKu}) applied with $D$ the anti-canonical divisor of $\tilde X'$, i.e. the union of the boundary divisors and the $\mc B$-stable prime divisors and $\mc B^-$-stable prime divisors that are not $\mc G$-stable.
\end{proof}

\subsection{Reductive monoids}\label{ss.redmon}
Throughout this section $M$ denotes a normal reductive monoid with unit group $G$. Let $T$ be a maximal torus of $G$, let $B$ be a Borel subgroup of $G$ containing $T$ and let $U$ be its unipotent radical. The opposite Borel subgroup and its unipotent radical are denoted by $B^-$ and $U^-$. For each root $\alpha$ we denote the corresponding root subgroup by $U_\alpha$. As is well-known, any $G\times G$-orbit in $M$ is of the form $GeG$ for some idempotent $e\in\ov T$, where $\ov T$ is the closure of $T$ in $M$. See e.g. \cite[Lem.~3]{Rit1}.

From now on $e$ denotes an idempotent in $\ov T$. As in \cite[Sect.~4.2]{Ren2} we define $P=\{x\in G\,|\,xe=exe\}$ and $P^-=\{x\in G\,|\,ex=exe\}$. These are opposite parabolic subgroups with common Levi subgroup $L=C_G(e)$ and with $eR_uP^-=(R_uP)e=\{e\}$. Note that $C_{G\times G}(e)\subseteq P\times P^-$. The map $x\to ex:L\to Le$ is a homomorphism of reductive groups. Let $L_1$ be the subgroup of $L$ generated by $T$ and the $U_\alpha$, $\alpha$ a root of $L$, which don't lie in the kernel of this homomorphism and let $L_2$ be the subgroup of $L$ generated by $T$ and the other $U_\alpha$. Note that these are Levi subgroups of parabolics in $L$ (or in $G$). We denote $B\cap L$ by $B_L$ and similarly for $U$ and for $L_1$ and $L_2$. The argument in the proof of assertion (ii) below was mentioned to me by L.~Renner.

\begin{prop}[cf.~\cite{Ren1}]\ \label{prop.separable}
\begin{enumerate}[{\rm(i)}]
\item The morphism $T\to Te$ is separable and its kernel is connected.
\item Put $N=\{x\in\ov L_2\,|\, ex\in eT\}$. Then multiplication defines open embeddings
\begin{align*}
&R_uP^-\times U^-_{L_1}\times N\times U_{L_1}\times R_uP\to M,\\
&R_uP^-\times U^-_{L_1}\times Te\times U_{L_1}\times R_uP\to GeG,\\
&U^-_{L_1}\times N\times U_{L_1}\to\ov{L},\\
&U^-_{L_1}\times Te\times U_{L_1}\to Le.
\end{align*}
\item The morphisms $L\to Le$ and $\ L\times L\to Le$ and $G\times G\to GeG$ are all separable.
\end{enumerate}
\end{prop}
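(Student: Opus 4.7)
The plan is to follow Renner's approach \cite{Ren1}, extracting a big cell decomposition of $M$ at $e$ by bootstrapping from the classical Bruhat-type decomposition of $G$ and then invoking normality of $M$.

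For (i), I would observe that $\ov T$ is a normal affine toric variety under $T$, so its coordinate ring is the semigroup algebra $k[\mc M]$ for a saturated submonoid $\mc M\subseteq X(T)$. Idempotents in $\ov T$ correspond bijectively to faces $F$ of $\mc M$ via $e_F(\chi)=1$ on $F$ and $0$ otherwise; the orbit $Te_F$ is then a torus with character group $\mb Z F$, which is saturated in $X(T)$ since $F$ is a face of a saturated monoid. Hence $\ker(T\to Te_F)$ is the subtorus of $T$ dual to $X(T)/\mb Z F$, in particular connected, and the quotient morphism of tori is smooth, hence separable.

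For (ii), all four open embeddings follow one template: extend a classical big cell to the closure of $L$ in $M$. Starting from the standard open embedding $R_uP^-\times L\times R_uP\hookrightarrow G$ attached to $(P,P^-)$, I would show that $\mu\colon R_uP^-\times\ov L\times R_uP\to M$ is injective; since $\mu$ restricts to an isomorphism on the dense open $R_uP^-\times L\times R_uP$, a standard Zariski's-main-theorem argument using normality of $M$ upgrades injectivity to an open embedding. For injectivity, pick a cocharacter $\lambda\in Y(T)$ with $\lim_{t\to 0}\lambda(t)=e$ such that $R_uP$ is the attracting and $R_uP^-$ the repelling unipotent; if $u_1^-l_1u_1=u_2^-l_2u_2$ in $M$, then conjugation by $\lambda(t)$ as $t\to 0$, combined with $eR_uP^-=R_uP\cdot e=\{e\}$ and the characterisations $P=\{x:xe=exe\}$, $P^-=\{x:ex=exe\}$, forces $u_1^-=u_2^-$ and $u_1=u_2$, whence $l_1=l_2$. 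The decomposition $\ov L=U_{L_1}^-\cdot N\cdot U_{L_1}$ is obtained by the same argument applied internally to $\ov L$, using that $L_2$ acts trivially on $e$ modulo $\ker(T\to Te)$. The two remaining embeddings, into $GeG$ and $Le$, follow by restricting the preceding two to the $(G\times G)$- and $(L\times L)$-orbits of $e$, under which $\ov L$ (resp.\ $N$) is replaced by $Le$ (resp.\ $Te$).

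For (iii), I would combine (ii) with a smoothness analysis of stabilizers. The conditions on $P,P^-$ give a scheme-theoretic isomorphism $C_{G\times G}(e)\cong R_uP\times R_uP^-\times C_{L\times L}(e)$, and $(x,y)\mapsto(xy^{-1},y)$ gives $C_{L\times L}(e)\cong\ker(L\to Le)\times L$. A dimension count using the open embedding of (ii) then shows that separability of $G\times G\to GeG$ is equivalent to separability of $L\to Le$, equivalently to smoothness of $\ker(L\to Le)$. This kernel is scheme-theoretically generated by the smooth subgroups $L_2$ and $\ker(T\to Te)$ (the latter smooth by (i)), and surjectivity of $\Lie L\to\Lie Le$ follows from the root-space decomposition together with the defining property of $L_1$. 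The main obstacle is the injectivity of $\mu$ in (ii): this is the one place where the scheme-theoretic conditions on $e$ are essentially used, as opposed to the group-theoretic big cell in $G$. Once injectivity is secured, Zariski's main theorem, normality of $M$, and standard root-system bookkeeping handle the rest.
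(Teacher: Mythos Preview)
Your outline is broadly correct and follows the same architecture as the paper: reduce to injectivity via Zariski's Main Theorem using normality of $M$, and then bootstrap separability. There are two places where your route diverges from the paper's, and one of them has a small gap.

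For (ii), you factor the big-cell embedding in two stages (first $R_u\mc P^-\times\ov L\times R_u\mc P\hookrightarrow M$, then $U_{L_1}^-\times N\times U_{L_1}\hookrightarrow\ov L$), whereas the paper proves injectivity of the five-fold product $R_uP^-\times U_{L_1}^-\times N\times U_{L_1}\times R_uP\to M$ directly. Your staging works, but note that to apply ZMT in the second stage you need $\ov L$ normal; this is not assumed, and you should say that it follows from the first stage (an open piece of the normal variety $M$ is isomorphic to a product with smooth factors $R_uP^\pm$, hence the middle factor $\ov L$ is normal). Also, your injectivity sketch ``conjugation by $\lambda(t)$'' is too terse: the limit along $\lambda$ only exists on one side, so the actual argument (as in the paper) is to observe that both $\lambda$- and $(-\lambda)$-limits exist for the common element, hence by affineness it is $\lambda$-fixed, and then one still needs a cancellation step (the paper cites \cite[Lem.~6.2]{Ren1}) to go from $u^-l=l$ with $l\in\ov L$ to $u^-=1$.

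For (iii), the paper's argument is shorter and avoids your scheme-theoretic kernel analysis: from (i) and the last open embedding in (ii) one sees that $L\to Le$ restricted to the big cell $U_{L_1}^-L_2U_{L_1}$ is the map $(u^-,l_2,u)\mapsto(u^-,l_2e,u)$ into $U_{L_1}^-\times Te\times U_{L_1}$, separable because $T\to Te$ is. The remaining separability statements then follow by composition, and the paper notes that the scheme-theoretic stabiliser of $e$ in $G\times G$ lies in $P\times P^-$, so it is reduced. Your approach via ``the kernel is scheme-theoretically generated by $L_2$ and $\ker(T\to Te)$'' is morally right but not quite a proof: being generated by smooth subgroups does not by itself give smoothness of the kernel; you would need to show the kernel \emph{equals} $DL_2\cdot\ker(T\to Te)$ as a group scheme, which is essentially the separability statement you want.
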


\begin{proof}
(i).\ Since the closure of $T$ is normal by \cite[Cor.~6.2.14]{BriKu} or \cite[Ex.~4.6.2.8]{Ren2}, this follows from the theory of toric varieties (\cite[Ch.~1]{KKMS}). In fact the morphism $T\to Te$ corresponds to the embedding of the characters of $T$ that vanish on the face corresponding to $e$ into the full character group of $T$.\\
(ii).\ We only have to prove that the first map is an open embedding. Since $M$ is normal, it suffices, by Zariski's Main Theorem, to show that this map is injective. Assume that $u_1x_1v_1=u_2x_2v_2$, where $u_i\in R_uP^-U^-_{L_1}$, $x_i\in N$ and $v_i\in U_{L_1}R_uP$.
Then $$u_1x_1ev_1e=u_1x_1v_1e=u_2x_2v_2e=u_2x_2ev_2e.$$
So, by the definition of $N$, there exist $t_1,t_2\in T$ such that
$$u_1t_1ev_1e=u_2t_2ev_2e.$$
Multiplying on the left by $t_1^{-1}u_2^{-1}$ and on the right with $v_1^{-1}e$ we get
$$t_1^{-1}u_2^{-1}u_1t_1e=t_1^{-1}t_2ev_2v_1^{-1}e.$$
Let $\lambda$ be a cocharacter of $T$ that defines $P$. Then the limit along $\lambda$ (for the action by conjugation) of the RHS exists and the limit along $-\lambda$ of the LHS exists. So, since $M$ is affine, both sides are centralised by $\lambda$. Taking limits along $\lambda$ in the above equality, we obtain $t_1^{-1}u_2^{-1}u_1t_1e=t_1^{-1}t_2e$. Taking limits along $-\lambda$ in this equality, we obtain $e=t_1^{-1}t_2e$. So $u_1=u_2$ by \cite[Lem.~6.2]{Ren1}. Similarly, we obtain $v_1=v_2$ and therefore also $x_1=x_2$.\\
(iii).\ By (i) and the last open embedding in (ii) we have that the morphism $L\to Le$ is separable. Then the same must hold for the morphism $(g,h)\to geh^{-1}=gh^{-1}e:L\times L\to Le$, since it is the composite of two separable morphisms. Then the same must hold for the morphism $P\times P^-\to Le$, again since it is the composite of two separable morphisms. But the scheme theoretic stabiliser of $e$ in $G\times G$ is contained in $P\times P^-$, so it is reduced and the orbit map $G\times G\to GeG$ is separable.
\end{proof}

Define $\mc G=G\times G$, $\mc T=T\times T$, $\mc B=B^-\times B$, $\mc P^-=P\times P^-$, $\mc H=C_{G\times G}(e)$ and let $\pi:\mc P^-\to Le\times Le$ be the canonical morphism. By Proposition~\ref{prop.separable} we can now apply the results from Section~\ref{ss.Ginductiongeneral} with $G$ replaced by $Le$. Then $H$ is the diagonal in $Le\times Le$, $H_{\rm a}$ the subgroup scheme of $Le\times Le$ generated by the diagonal and the scheme theoretic centre and ${\mc H}_{\rm a}=C_{G\times G}(e)\mc Z$, where $\mc Z$ is the scheme theoretic inverse image in $T\times T$ of the scheme theoretic centre of $Le\times Le$. The group $B$ from Section~\ref{ss.Ginductiongeneral} is now equal to $B_Le=B_{L_1}e$. Note furthermore that $\mc K=L\times L$, $\mc L=L_1\times L_1$, $\mc M=L_2\times L_2$ and $\mc Q=Q^-\times Q$, where $Q=R_uPU_{L_1}L_2$.

Let $\alpha_1,\ldots,\alpha_r$ be the simple roots of $G$ and partition the set $\{1,\ldots,r\}$ into three subsets $I_1$, $I_2$ and $J$ such that the $\alpha_i$ with $i\in I_j$ are the simple roots of $L_j$ and such that the $\alpha_i$ with $i\in J$ are the simple roots that are not roots of $L$. The sets $I_2$ and $J$ are now different from Section~\ref{ss.Ginductiongeneral}. The roots $\alpha_j$, $j\in J\cup I_2$, from Section~\ref{ss.Ginductiongeneral} now come in pairs and are denoted $\alpha_{j1}=(-\alpha_j,0)$, $\alpha_{j2}=(0,\alpha_j)$, $j\in I_2\cup J$. We denote the corresponding pull-back divisors by $D_j^1$ and $D_j^2$. Proposition~\ref{prop.valuationcone} and Theorem~\ref{thm.canonicaldivisor} can now be expressed in the new notation by replacing in the second sum in the formulas $D_j$ by $D_j^1+D_j^2$ and $\rho_{\mc Q}$ by $\rho_Q$. The formula in Proposition~\ref{prop.wfulpicard} now becomes
$$(\tau_\chi)=\sum_{i\in I_1}\la\chi_2,\alpha_i^\vee\ra{\bf D}_i+\sum_{j\in J}\big(-\la\chi_1,\alpha_j^\vee\ra {\bf D}^1_j+\la\chi_2,\alpha_j^\vee\ra {\bf D}^2_j\,\big)\ ,$$
for $\chi=(\chi_1,\chi_2)\in{\rm Pic}({\bf X})\subseteq X(\tilde T)\times X(\tilde T)$.

\section{Further examples}\label{s.examples}
In this section we briefly indicate what the different groups from Sections~\ref{s.induction} and \ref{s.Ginduction} are in the case of determinantal varieties and varieties of (circular) complexes. After that we determine the class groups of a certain reductive monoid and the varieties of circular complexes and determine when these varieties are Gorenstein. These questions were answered by Svanes, Bruns and Yoshino for the determinantal varieties and the varieties of complexes. See \cite{Bru}, \cite{Y} and the references there. See also \cite{DeC}, \cite{Bri1}, \cite{MT1} and \cite{MT2}.

Throughout this section we will be dealing with normal cones, i.e. with normal closed subvarieties of some affine space which are stable under scalar multiplication. Such varieties have trivial Picard group. See e.g. \cite[Lem.~2.1]{Bri1} (one needs that $L$ is linearly reductive, and in our case $L=k^\times$, the multiplicative group of the field).
The normality of the varieties in this section is well-known. One can show it for example by first showing that the coordinate ring has a good filtration (e.g. by tableaux methods as in \cite{DeCS} or \cite{T2}) and then applying the method of $U$-invariants (see \cite[Thm.~17]{Gr1}).
\subsection{Determinantal varieties and varieties of (circular) complexes}\label{ss.detcirc}\ \\
{\bf The determinantal varieties}.
Let $m,n,r$ be integers with $0\le r\le m,n$ and let $X_r$ be the variety of $m\times n$ matrices of rank $\le r$. Put $\mc G=\GL_m\times\GL_n$ and put $G=\GL_r$. Let $\mc B$ be the Borel subgroup group that consists of the pairs of matrices $(A,B)$ with $A$ lower triangular and $B$ upper triangular. The open $\mc B$-orbit in $X_r$ is the $\mc B$-orbit of
\begin{equation*}
E_r=\begin{bmatrix}
I_r&0\\
0&0
\end{bmatrix},
\end{equation*}
where $I_r$ is the $r\times r$ identity matrix.
The stabiliser $\mc H$ of $E_r$ in $\mc G$ consists of the pairs of matrices
\begin{equation*}
(A,B)=\Big(
\begin{bmatrix}
A_{11}&A_{12}\\
0&A_{22}
\end{bmatrix},
\begin{bmatrix}
B_{11}&0\\
B_{21}&B_{22}
\end{bmatrix}
\Big)\,,
\end{equation*}
with $A_{11}=B_{11}$. If we drop this condition, then we obtain the parabolic $\mc P^-$, and it will be clear what the Levi $\mc K$ and the opposite parabolic $\mc P$ are. The subgroup $\mc L$ of $\mc P^-$ consist of the pairs $(A,B)\in\mc P^-$ with $A_{12}=0$, $B_{21}=0$ and $A_{22}$ and $B_{22}$ diagonal. The subgroup $\mc Q$ of $\mc P$ consist of the pairs $(A,B)\in\mc P$ with $A_{11}$ lower triangular and $B_{11}$ upper triangular. Its Levi $\mc M$ consists of the pairs $(A,B)\in\mc P$ with $A_{21}=0$, $B_{12}=0$ and $A_{11}$ and $B_{11}$ diagonal. The homomorphism $\mc P^-\to G\times G$ is given by $(A,B)\mapsto (A_{11},B_{11})$. Finally, the group $\mc P^{'-}$ consists of the pairs $(A,B)\in\mc P^{'-}$ with $A_{22}$ upper triangular and $B_{22}$ lower triangular.\\
{\bf The varieties of complexes}. Let $l,m,n,r,s$ be integers with $0\le r\le l$, $0\le s\le n$ and $r+s\le m$ and let $X_{rs}$ be the variety of pairs of matrices $(A,B)\in\Mat_{l,m}\times\Mat_{m,n}$ with $\rk\,A\le r$, $\rk\,B\le s$ and $AB=0$.
Put $\mc G=\GL_l\times\GL_m\times\GL_n$ and put $G=\GL_r\times\GL_s$. The action of $\mc G$ on $X_{rs}$ is given by $(g_1,g_2,g_3)\cdot (A,B)=(g_1Ag_2^{-1},g_2Bg_3^{-1})$. Let $\mc B$ be the Borel subgroup group that consists of the triples of matrices $(A,B,C)$ with $A$ and $C$ lower triangular and $B$ upper triangular. The open $\mc B$-orbit in $X$ is the $\mc B$-orbit of
\begin{equation*}
E_{rs}=(E_r,F_s)=\Big(\begin{bmatrix}
I_r&0&0\\
0&0&0
\end{bmatrix},
\begin{bmatrix}
0&0\\
0&0\\
0&I_s
\end{bmatrix}\Big),
\end{equation*}
where $I_r$ is the $r\times r$ identity matrix and $I_s$ is the $s\times s$ identity matrix.
The stabiliser $\mc H$ of $E_{rs}$ in $\mc G$ consists of the triples of matrices
\begin{equation*}
(A,B,C)=\Big(
\begin{bmatrix}
A_{11}&A_{12}\\
0&A_{22}
\end{bmatrix},
\begin{bmatrix}
B_{11}&0&0\\
B_{21}&B_{22}&0\\
B_{31}&B_{32}&B_{33}
\end{bmatrix},
\begin{bmatrix}
C_{11}&C_{12}\\
0&C_{22}
\end{bmatrix}
\Big)\,,
\end{equation*}
with $A_{11}=B_{11}$ and $B_{33}=C_{22}$. If we drop these conditions, then we obtain the parabolic $\mc P^-$. The homomorphism $\mc P^-\to G\times G$ is given by $(A,B,C)\mapsto\big((A_{11},B_{33}),(B_{11},C_{22})\big)$.\\
{\bf The varieties of circular complexes}. Let $m,n,r,s$ be integers $\ge0$ with $r+s\le m,n$ and let $X_{rs}$ be the variety of pairs of matrices $(A,B)\in\Mat_{m,n}\times\Mat_{n,m}$ with $\rk\,A\le r$, $\rk\,B\le s$, $AB=0$ and $BA=0$.
Put $\mc G=\GL_m\times\GL_n$ and put $G=\GL_r\times\GL_s$. The action of $\mc G$ on $X_{rs}$ is given by $(g_1,g_2)\cdot (A,B)=(g_1Ag_2^{-1},g_2Bg_1^{-1})$. Let $\mc B$ be the Borel subgroup group that consists of the pairs of matrices $(A,B)$ with $A$ lower triangular and $B$ upper triangular. The open $\mc B$-orbit in $X_{rs}$ is the $\mc B$-orbit of
\begin{equation*}
E_{rs}=(E_r,F_s)=\Big(\begin{bmatrix}
I_r&0&0\\
0&0&0\\
0&0&0
\end{bmatrix},
\begin{bmatrix}
0&0&0\\
0&0&0\\
0&0&I_s
\end{bmatrix}\Big),
\end{equation*}
where $I_r$ is the $r\times r$ identity matrix and $I_s$ is the $s\times s$ identity matrix.
The stabiliser $\mc H$ of $E_{rs}$ in $\mc G$ consists of the pairs of matrices
\begin{equation*}
(A,B)=\Big(
\begin{bmatrix}
A_{11}&A_{12}&A_{13}\\
0&A_{22}&A_{23}\\
0&0&A_{33}
\end{bmatrix},
\begin{bmatrix}
B_{11}&0&0\\
B_{21}&B_{22}&0\\
B_{31}&B_{32}&B_{33}
\end{bmatrix}
\Big)\,,
\end{equation*}
with $A_{11}=B_{11}$ and $A_{33}=B_{33}$. If we drop these conditions, then we obtain the parabolic $\mc P^-$. The homomorphism $\mc P^-\to G\times G$ is given by $(A,B)\mapsto\big((A_{11},B_{33}),(B_{11},A_{33})\big)$.\\
\subsection{The class group and the Gorenstein property}\ \\
{\bf The monoid $M$}.
Let $m$ be an integer $\ge 1$ and put $n=2m$. As in \cite[Sect.~3]{T2} we define $M$ as the monoid of $m\times m$ matrices
$\begin{bmatrix}
A&0\\
0&B
\end{bmatrix}$
with $A,B\in\Mat_m$ and $A^TB=AB^T=d(A,B)I$ for some scalar $d(A,B)\in k$. Here $A^T$ denotes the transpose of a matrix $A$ and $I$ is the $m\times m$ identity matrix. Then $M$ is a connected reductive closed submonoid of $\Mat_m\times\Mat_m\subseteq\Mat_n$ and $d$ is a regular function on $M$ which we call the {\it coefficient of dilation}. We denote the group of invertible elements of $M$ by $G$. Note that $M\sm G$ is the zero locus of $d$. Let $T$ be the maximal torus of $G$ that consists of diagonal matrices. We denote the Borel subgroup of $G$ which consists of the $(A,C)\in G$ with $A$ upper triangular (then the matrix $C$ is lower triangular) by $B$. Denote the restrictions to $T$ of the standard coordinates on the diagonal matrices by $\ve_1,\ldots,\ve_n$. Then we have relations $\ve_i+\ve_{m+i}=\ve_j+\ve_{m+j}$ for all $i,j\in\{1,\ldots,m\}$. The simple roots are $\alpha_i=\ve_i-\ve_{i+1}$, $i\in\{1,\ldots,m-1\}$. We denote the corresponding $B^-\times B$-stable prime divisors of $X$ by $D_1,\ldots,D_{m-1}$.

For $r,s\ge0$ with $r+s\le m$ we define $X_{rs}$ as the subvariety of $M$ which consists of the $(A,B)\in M\sm G$ with $\rk\,A\le r$ and $\rk\,B\le s$. This subvariety is $G\times G$-stable and irreducible. For $r\in\{0,\ldots,m\}$ put $X_r=X_{r,m-r}$. Then the $G\times G$-stable prime divisors of $M$ are $X_0,\ldots, X_m$. The open $B^-\times B$-orbit in $X_{rs}$ is the $B^-\times B$-orbit of $E_{r,s}=(E_r,F_s)$, where $E_r,F_{s}\in\Mat_m$ are defined by
\begin{equation*}
E_r=\begin{bmatrix}
I_r&0\\
0&0
\end{bmatrix},
F_s=\begin{bmatrix}
0&0\\
0&I_s
\end{bmatrix},
\end{equation*}
where $I_r$ and $I_s$ are the obvious identity matrices. Now define $\lambda_r\in Y(T)$ by
$$\lambda_r(t)=E_{r,m-r}+t(I_n-E_{r,m-r}).$$
Then $\lambda_r$ is indivisible and its limit exists and lies in the open $B^-\times B$-orbit of $X_r$. So, by the corollary to Proposition~\ref{prop.limit}, the image of (the valuation corresponding to) $X_r$ in $\mb Q\ot Y(T)$ is $\lambda_r$.

\begin{prop}\
\begin{enumerate}[{\rm(i)}]
\item The class group of $M$ is freely generated by the images of $D_1,\ldots,D_{m-1}$.
\item A canonical divisor of $M$ is given by $-2\sum_{i=1}^{m-1}D_i$. So if $m>1$, then $M$ is not Gorenstein.
\end{enumerate}
\end{prop}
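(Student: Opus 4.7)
My approach reduces everything to bookkeeping with the character lattice $X(T)$, using Remark~\ref{rems.divisor}.1 and Theorem~\ref{thm.canonicaldivisor}.

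For (i), view $M$ as a $(G\times G)$-embedding of $G$, so in the notation of Section~\ref{ss.Ginductiongeneral} we have $\mc P^-=\mc G$, $J=\emptyset$, $I_1=\{1,\ldots,m-1\}$, and the boundary divisors are $X_0,\ldots,X_m$ with images $\lambda_r$ in $\mb Q\ot Y(T)$ (already recorded in the paragraph preceding the proposition). The class group is thus generated by $D_1,\ldots,D_{m-1},X_0,\ldots,X_m$ modulo the principal divisors of $B^-\times B$-semi-invariants, which by Remark~\ref{rems.divisor}.1 have the form
$$(\ov\chi)=\sum_{i=1}^{m-1}\la\chi,\alpha_i^\vee\ra D_i+\sum_{r=0}^{m}\la\chi,\lambda_r\ra X_r,\quad \chi\in X(T).$$
As a $\mb Z$-basis of $X(T)$ I would take $\ve_1,\ldots,\ve_m$ together with the coefficient of dilation $d$ (so that $\ve_{m+k}=d-\ve_k$).

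The decisive relation comes from $\chi=d$. Since $d$ is a character of $G$ it is trivial on the derived group, so $\la d,\alpha_i^\vee\ra=0$; a direct evaluation from the explicit definition of $\lambda_r$ gives $d(\lambda_r(t))=t$, hence $\la d,\lambda_r\ra=1$ for every $r$, yielding
$$\sum_{r=0}^{m}X_r\sim 0.\qquad(\star)$$
For $\chi=\ve_j$ with $1\le j\le m$, the pairings $\la\ve_j,\alpha_i^\vee\ra$ are the usual simple-root pairings, while $\la\ve_j,\lambda_r\ra$ equals $0$ for $r\ge j$ and $1$ for $r<j$. Writing $S_k=\sum_{r<k}X_r$ this becomes $S_j\sim D_{j-1}-D_j$ (with the convention $D_0=D_m:=0$), which recursively expresses every $X_r$ as a $\mb Z$-linear combination of the $D_i$. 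A rank count then finishes (i): the $m+1$ relations coming from the basis $\ve_1,\ldots,\ve_m,d$ are visibly independent (they form a triangular system in the variables $X_0,X_1,\ldots,X_m$), so $\mathrm{Cl}(M)$ has rank $2m-(m+1)=m-1$; since the $D_i$ already generate, they must generate freely, and the relations coming from $\ve_{m+k}$ are automatic via $\ve_{m+k}=d-\ve_k$.

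For (ii), Theorem~\ref{thm.canonicaldivisor}(i) with $J=\emptyset$ gives $K_M=-\sum_{r=0}^{m}X_r-2\sum_{i=1}^{m-1}D_i$, which by $(\star)$ reduces to $-2\sum_{i=1}^{m-1}D_i$. The variety $M$ is a normal affine cone, so $\mathrm{Pic}(M)=0$ (as noted before Section~\ref{ss.detcirc}); hence being Gorenstein is equivalent to $K_M$ being trivial in $\mathrm{Cl}(M)$, and by (i) this fails as soon as $m>1$. The one genuinely non-formal ingredient anywhere in the argument is the computation $d(\lambda_r(t))=t$ that produces the relation $(\star)$ feeding both parts; everything else is linear algebra over $X(T)$ and $Y(T)$.
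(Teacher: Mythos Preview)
Your argument is correct and follows the same route as the paper: present ${\rm Cl}(M)$ as the free group on the $D_i$ and $X_r$ modulo the principal divisors $(\ov\chi)$ for $\chi$ running over a basis of $X(T)$, compute these divisors via Proposition~\ref{prop.valuationcone} and the pairings $\la\chi,\lambda_r\ra$, and then read off the result. The only cosmetic difference is your choice of basis: you use $\ve_1,\ldots,\ve_m,d$, whereas the paper uses $\ve_1,\ldots,\ve_{m+1}$ (equivalent since $\ve_{m+1}=d-\ve_1$). Your separation of the relation $(\star)$ coming from $d$ is exactly the paper's computation $(d)=\sum_rX_r$, which it obtains as $(\ov{\ve_1+\ve_{m+1}})$; both feed directly into Theorem~\ref{thm.canonicaldivisor} for (ii). Your added sentence justifying non-Gorensteinness via ${\rm Pic}(M)=0$ makes explicit what the paper leaves implicit.
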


\begin{proof}
(i).\ Clearly, $X(T)$ is generated by $\ve_1,\ldots,\ve_{m+1}$. By \cite[Sect.~5.1]{Bri2} ${\rm Cl}(M)$ is the quotient of the free group on the $D_i$ and $X_i$ by the subgroup whose elements are the $(\ov\chi)$, $\chi\in X(T)$. By Proposition~\ref{prop.valuationcone} we have $(\ov{\ve_1})=X_0+D_1$, $(\ov{\ve_i})=\sum_{j=0}^{i-1}X_j+D_i-D_{i-1}$ for $i\in\{2,\ldots,m-1\}$, $(\ov{\ve_m})=\sum_{j=0}^{m-1}X_j-D_{m-1}$ and $(\ov{\ve_{m+1}})=\sum_{j=1}^mX_j-D_1$. The assertion follows.\\
(ii).\ Since $d=\ov{\ve_1+\ve_{m+1}}$, we have, by Proposition~\ref{prop.valuationcone}, $(d)=\sum_{i=0}^mX_i$. So the assertion follows from Theorem~\ref{thm.canonicaldivisor} (or \cite[Prop.~4]{Rit2} or \cite[Prop.~6.2.6]{BriKu}).
\end{proof}

\noindent{\bf Circular complexes}. 
Let $m,n,r,s$ be integers $\ge0$ with $r+s\le m,n$ and let $X_{rs}$, $E_{rs}$, $\mc G$, $\mc H$, $G$ and $\mc B$ be as in the final part of Section~\ref{ss.detcirc}. We note that if $m=n$, the map $(A,B)\mapsto(A,B^T)$ defines and isomorphism between $X_{rs}$ and the variety $X_{rs}$ from the first part of this section. The Levi $\mc K$ of $\mc P^-$ consists of the $(A,B)\in\mc P^-$ with the non-diagonal blocks zero. The parabolic $\mc Q$ consists of the $(A,B)\in\mc P$ with $A_{11}$ and $A_{33}$ lower triangular and with $B_{11}$ and $B_{33}$ upper triangular. Let $\mc T$ and $T$ be the maximal tori of $\mc G$ and $G$ that consist of pairs of diagonal matrices and let $B$ be the Borel subgroup of $G$ that consists of the pairs of matrices $(A,C)$ with $A$ upper and $C$ lower triangular. The standard coordinates on the left factor of $\mc T$ are denoted by $\ve_{11},\ldots,\ve_{m1}$, the standard coordinates on the right factor of $\mc T$ are denoted by $\ve_{12},\ldots,\ve_{n2}$ and the standard coordinates on $T$ are denoted by $\ve_1,\ldots,\ve_r,\delta_1,\ldots,\delta_s$. For $i\in\{1,\ldots,r-1\}$ we put $\alpha_i=\ve_i-\ve_{i+1}$ and for $j\in\{1,\ldots,s-1\}$ we put $\beta_j=-\delta_j+\delta_{j+1}$. We denote the corresponding $\mc B$-stable prime divisors by $D_i$ and $E_j$. So these are the $\mc B$-stable prime divisors that intersect $\mc P^-/\mc H$.
Each of the simple roots $\alpha_i$, $\beta_j$ of $G$ produces two simple roots of $\mc G$: $\alpha_{i1}=-\ve_{i1}+\ve_{i+1,1}$, $\alpha_{i2}=\ve_{i2}-\ve_{i+1,2}$, $\beta_{j1}=-\ve_{m-s+j,1}+\ve_{m-s+j+1,1}$ and $\beta_{j2}=\ve_{n-s+j,2}-\ve_{n-s+j+1,2}$.
The simple roots of $\mc G$ that are not roots of $\mc K$ are: $\gamma_{r1}=-\ve_{r1}+\ve_{r+1,1}$, $\gamma_{r2}=\ve_{r2}-\ve_{r+1,2}$, $\gamma_{s1}=-\ve_{m-s,1}+\ve_{m-s+1,1}$ and $\gamma_{s2}=\ve_{n-s,2}-\ve_{n-s+1,2}$. Here $\gamma_{r1}$ has to be omitted if $r=0$ or $r=m$, $\gamma_{r2}$ has to be omitted if $r=0$ or $r=n$ and similarly for the $\gamma_{sj}$. Furthermore, $\gamma_{r1}=\gamma_{s1}$ if $r+s=m$ and $\gamma_{r2}=\gamma_{s2}$ if $r+s=n$. We denote the $\mc B$-stable prime divisors corresponding to $\gamma_{rj}$ and $\gamma_{sj}$ by $D_{rj}$ and $D_{sj}$. Finally, we determine the ``weight map'' $X(T)\to X(\mc T)$ (i.e. the negative of composition with the orbit map). For $i\in\{1,\ldots,r\}$, $\ve_i$ is sent to $-\ve_{i1}+\ve_{i2}$ and for $j\in\{1,\ldots,s\}$, $\delta_j$ is sent to $\ve_{m-s+j,1}-\ve_{n-s+j,2}$.

Note that, if $m\le n$, the varieties $X_{m,0}$ and $X_{0,m}$ are open subsets of an affine space. In the proposition below these varieties and $X_{0,0}$ are excluded from consideration.
\begin{prop}\label{prop.classgroup}\
Assume $m\le n$.
\begin{enumerate}[{\rm(i)}]
\item Let $r\in\{1,\ldots,m-1\}$ and assume $m=n$. Then $X_{r,m-r}$ has two boundary divisors: $X_{r-1,m-r}$ and $X_{r,m-r-1}$. The class group is freely generated by $D_{r1}$ and $D_{r2}$. The canonical divisor is trivial.
\item Let $r\in\{1,\ldots,m-1\}$ and assume $m<n$. Then $X_{r,m-r}$ has no boundary divisors. The class group is freely generated by $D_{r1}$. A canonical divisor is given by $2(n-m)D_{r1}$.
\item Let $r,s\ge0$ with $(r,s)\ne(0,0)$ and $r+s<m$. Then $X_{rs}$ has no boundary divisors. The class group of $X_{rs}$ is freely generated by $D_{r1}$ if $s=0$, by $D_{s1}$ if $r=0$ and by $D_{r1}$ and $D_{s1}$ if $r,s>0$. A canonical divisor is given by $(n-m)(D_{r1}+D_{s1})$, where $D_{r1}$ has to be omitted if $r=0$ and $D_{s1}$ has to be omitted if $s=0$.
\end{enumerate}
\end{prop}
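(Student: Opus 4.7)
The plan is to follow the template of the computation at the end of Section~\ref{ss.redmon} for the monoid $M$: apply Proposition~\ref{prop.valuationcone} (with Remark~\ref{rems.divisor}.1) to write down $(\ov\chi)$ for a basis of $X(T)$, and then apply Theorem~\ref{thm.canonicaldivisor} to read off $K_X$. First I must know which boundary divisors appear. Using Proposition~\ref{prop.induction}(i) together with $T \cdot x = T$ in the $G \times G$-setup, I get $\dim X_{rs} = \dim R_u(\mc Q) + \dim T = (r+s)(m+n-r-s)$; consequently $\codim_{X_{rs}} X_{r-1, s} = \codim_{X_{rs}} X_{r, s-1} = m + n - 2(r+s) + 1$, which equals $1$ exactly when $m = n$ and $r + s = m$ (case~(i)) and is strictly bigger than $1$ otherwise. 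Since the $\mc G$-orbit closures in $X_{rs}$ are precisely the $X_{r',s'}$ with $r' \le r$, $s' \le s$, this identifies the boundary divisors of $X_{rs}$: $X_{r-1,m-r}$ and $X_{r,m-r-1}$ in case~(i), and none in cases~(ii) and~(iii).

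In case~(i) these two boundary divisors must be located in $\mb Q \ot Y(T)$ via the corollary to Proposition~\ref{prop.limit}. The indivisible cocharacter $\lambda = \ve_{r,1}^\vee \in Y(\mc T)$ satisfies $\lim_{t \to 0}\lambda(t) \cdot E_{rs} = E_{r-1, m-r}$, which lies in the open $\mc B$-orbit of $X_{r-1, m-r}$; applying the surjection $\mb Q \ot Y(\mc T) \twoheadrightarrow \mb Q \ot Y(T)$ dual to the weight embedding described before Proposition~\ref{prop.valuationcone} sends $-\lambda$ to $\ve_r^\vee$. Similarly $\lambda = \ve_{r+1, 2}^\vee$ gives $X_{r, m-r-1}$, with image $\delta_1^\vee$.

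For each basis vector $\chi$ of $X(T) = \mb Z \ve_1 \oplus \cdots \oplus \mb Z \ve_r \oplus \mb Z \delta_1 \oplus \cdots \oplus \mb Z \delta_s$ I then write $(\ov\chi)$ out explicitly, using the pairings of $\chi$ against the simple coroots of $G$ (these give the coefficients of the $D_i, E_j$), against $\gamma_{r1}^\vee, \gamma_{r2}^\vee, \gamma_{s1}^\vee, \gamma_{s2}^\vee$ (interpreted via the weight embedding $X(T) \hookrightarrow X(\mc T)$), and against the boundary-divisor images from the previous step. Setting each $(\ov\chi)$ to zero in the class group and solving the resulting triangular system yields $D_i = 0 = E_j$ for all $i, j$: in case~(iii) with $r, s \ge 1$ the further relations force $D_{r2} = -D_{r1}$ and $D_{s2} = -D_{s1}$, leaving $D_{r1}, D_{s1}$ as free generators (with the obvious omissions when $r = 0$ or $s = 0$); in case~(ii) the coincidence $\gamma_{r1} = \gamma_{s1}$ together with $D_{r2} = D_{s2} = -D_{r1}$ leaves the single free generator $D_{r1}$; in case~(i) one further has $\gamma_{r2} = \gamma_{s2}$, both boundary classes coincide with $-D_{r1} - D_{r2}$ in ${\rm Cl}$, and ${\rm Cl}$ is freely generated by $D_{r1}$ and $D_{r2}$.

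Finally, Theorem~\ref{thm.canonicaldivisor} gives $K_X$. Writing $2\rho_\mc Q = 2\rho_\mc G - 2\rho_\mc M$ and using that the only simple factors of $\mc M$ are the middle $\GL_{m-r-s}$ and $\GL_{n-r-s}$, I compute $\la 2\rho_\mc Q, \gamma_{r1}^\vee\ra = \la 2\rho_\mc Q, \gamma_{s1}^\vee\ra = 2 + \max(0, m-r-s-1)$ and analogously $\la 2\rho_\mc Q, \gamma_{r2}^\vee\ra = \la 2\rho_\mc Q, \gamma_{s2}^\vee\ra = 2 + \max(0, n-r-s-1)$. Substituting into the formula of Theorem~\ref{thm.canonicaldivisor} and reducing modulo the relations from the previous paragraph gives: in case~(iii) the contributions from $\gamma_{r1}$ and $\gamma_{r2}$ cancel up to a residual $(n-m)D_{r1}$ (and similarly for $D_{s1}$), producing $K_X = (n-m)(D_{r1} + D_{s1})$; in case~(ii) the same mechanism yields $K_X = 2(n-m) D_{r1}$; in case~(i) all terms cancel exactly and $K_X = 0$. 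The chief subtlety is keeping careful track of the coincidences $\gamma_{r\bullet} = \gamma_{s\bullet}$ when $r+s$ equals $m$ or $n$, and of the fact that the formula for $\la 2\rho_\mc M, \gamma^\vee\ra$ degenerates to $0$ as soon as the middle block in question has fewer than two indices.
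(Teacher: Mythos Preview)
Your proposal is correct and follows essentially the same route as the paper: compute $\dim X_{rs}=(r+s)(m+n-r-s)$ to identify the boundary divisors, locate $X_{r-1,m-r}$ and $X_{r,m-r-1}$ in $\mb Q\ot Y(T)$ via explicit cocharacters and Proposition~\ref{prop.limit}, write down $(\ov{\ve_i})$ and $(\ov{\delta_j})$ from Proposition~\ref{prop.valuationcone} with Remark~\ref{rems.divisor}.1 to get the class-group relations, and read off $K_X$ from Theorem~\ref{thm.canonicaldivisor}. The only cosmetic difference is that the paper writes the single formula $\la2\rho_{\mc Q},\gamma_{r1}^\vee\ra=m-(r+s)+1$ (and its $n$-analogue) and lets the coincidence $D_{r1}=D_{s1}$ double it when $r+s=m$, whereas you separate out the degenerate-block case with the $\max(0,\,\cdot\,)$; the two bookkeepings agree.
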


\begin{proof}
First we compute a canonical divisor. We have $\rho_{\mc Q}=\rho-\rho_{rs}$, where $\rho_{rs}$ is the half sum of the positive roots of $\mc G$ that are not roots of $R_u\mc Q$. Then $\rho_{rs}=\rho_{rs1}+\rho_{rs2}$, where $2\rho_{rs1}=-\sum_{r+1}^{m-s}(m+r-s+1-2i)\ve_{i1}$ and $2\rho_{rs2}=\sum_{r+1}^{n-s}(n+r-s+1-2i)\ve_{i2}$. It follows that $\la2\rho_{rs},\gamma_{r1}\ra=\la2\rho_{rs},\gamma_{s1}\ra=-(m-r-s-1)$ and $\la2\rho_{rs},\gamma_{r2}\ra=\la2\rho_{rs},\gamma_{s2}\ra=-(n-r-s-1)$. So, by Theorem~\ref{thm.canonicaldivisor}, a canonical divisor of $X_{rs}$ is given by
\begin{align*}
-\sum_iX_i-2\sum_iD_i-2\sum_jE_j&-(m-(r+s)+1)(D_{r1}+D_{s1})\\
&-(n-(r+s)+1)(D_{r2}+D_{s2})\,,
\end{align*}
where the $X_i$ denote the boundary divisors.

From the description of the stabiliser of $E_{rs}$ in the previous section, we deduce that $X_{rs}$ has dimension $(r+s)(m+n-(r+s))$. So there can only be boundary divisors if $m=n$ and $r+s=m$ (we know they must be amongst the $X_{r's'}$). Now assume that this is the case. Then we have two boundary divisors $X_{r-1,m-r}$ and $X_{r,m-r-1}$. Define $\lambda_r(t)$ and $\mu_r(t)$ as the $m\times m$ diagonal matrices with all diagonal entries equal to $1$ except for the $r$-th respectively $(r+1)$-th which is equal to $t$. Then the limits of $E_{r,m-r}$ along $(\lambda_r,0)$ and $(0,\mu_r)\in X(\mc T)$ are equal to $E_{r-1,m-r}$ and $E_{r,m-r-1}$. By Proposition~\ref{prop.limit} and Remark~\ref{rem.danger}, the images of $\lambda_r$ and $\mu_r$ in $\mb Q\ot Y(T)$ are $\ve_r^*$ and $\delta_1^*$, where we used the obvious notation for dual basis elements. Note that both these elements are anti-dominant for our choice of $B$. Now we can apply Proposition~\ref{prop.valuationcone} and Remark~\ref{rems.divisor}.1 and we obtain $(\ov{\ve_1})=D_1$, $(\ov{\ve_i})=D_i-D_{i-1}$, $i\in\{2,\ldots,r-1\}$, $(\ov{\ve_r})=X_{r-1,m-r}-D_{r-1}+D_{r1}+D_{r2}$, $(\ov{\delta_1})=X_{r,m-r-1}-E_1+D_{r1}+D_{r2}$, $(\ov{\delta_j})=-E_j+E_{j-1}$, $j\in\{2,\ldots,s-1\}$, $(\ov{\delta_s})=E_{s-1}$. Here we used that $D_{sj}=D_{rj}$, $j\in\{1,2\}$. So all $D_i$ and $E_j$ are $0$ in ${\rm Cl}(X_{r,m-r})$ and $D_{r1}+D_{r2}\equiv-X_{r-1,m-r}\equiv-X_{r,m-r-1}$. Assertion (i) follows. The relations for the other cases are obtained by omitting $X_{r-1,m-r}$ and $X_{r,m-r-1}$ and replacing the relation for $\ov{\delta_1}$ by $(\ov{\delta_1})=-E_1+D_{s1}+D_{s2}$. The proof of (ii) (here $D_{r1}=D_{s1}$) and (iii) can now be left to the reader.
\end{proof}

\begin{cornn}
In the cases (i)-(iii) of Proposition~\ref{prop.classgroup}, $X_{r,s}$ is Gorenstein if and only if $m=n$. In particular, all $G\times G$-orbit closures in the variety of non-invertible elements of the monoid $M$ are Gorenstein.
\end{cornn}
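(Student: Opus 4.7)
The plan is to reduce the Gorenstein property to the vanishing of the canonical class, and then read off the result from Proposition~\ref{prop.classgroup}. Each $X_{rs}$ is a normal cone: it is closed in an affine space and stable under the diagonal scalar multiplication $(A,B)\mapsto(tA,tB)$ (both the rank conditions and the relations $AB=BA=0$ are homogeneous). Hence $\operatorname{Pic}(X_{rs})=0$ by the remark at the start of Section~\ref{s.examples}. Moreover, $X_{rs}$ is a $\mc G$-equivariant embedding of the spherical homogeneous space $\mc G/\mc H$ of Section~\ref{ss.detcirc}, so by Theorem~\ref{thm.Frobsplratres} it is Cohen--Macaulay. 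Combining these two facts, $X_{rs}$ is Gorenstein if and only if the canonical divisor is Cartier, if and only if its class in $\operatorname{Cl}(X_{rs})$ is zero.

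Next I would go through the three cases of Proposition~\ref{prop.classgroup}. In case~(i), the canonical class is already trivial, so $X_{rs}$ is Gorenstein. In case~(ii), the canonical class equals $2(n-m)D_{r1}$ in the free group $\mathbb{Z}D_{r1}$; since the assumption is $m<n$, this class is nonzero, so $X_{rs}$ is not Gorenstein. In case~(iii), the canonical class is $(n-m)(D_{r1}+D_{s1})$ (omitting $D_{r1}$ or $D_{s1}$ if the corresponding rank is zero) in a free group generated by the listed $D$'s; it vanishes precisely when $n=m$. Putting the three cases together proves the equivalence $\text{Gorenstein}\iff m=n$.

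For the final assertion about the monoid $M$, I would identify the non-invertible part of $M$ with a variety of circular complexes of square size. The map $(A,B)\mapsto(A^{T},B)$ sends the defining relations $A^{T}B=AB^{T}=0$ to $AB=0$ and $BA=0$, and it preserves the ranks of both matrices. Hence it identifies the stratum of $M\setminus G$ consisting of pairs with $\operatorname{rk}A\le r,\ \operatorname{rk}B\le s$ with the circular-complex variety $X_{rs}$ of Section~\ref{ss.detcirc} in the square case $m=n$. The $G\times G$-stable prime divisors of $M$ are the strata $X_r=X_{r,m-r}$ for $r\in\{0,\ldots,m\}$. For $r\in\{1,\ldots,m-1\}$ this falls into case~(i) of Proposition~\ref{prop.classgroup} (with $m=n$) and is therefore Gorenstein by the first part of the corollary. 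For the extreme values $r=0$ or $r=m$, the condition $AB=BA=0$ together with the rank condition forces one of the matrices to be zero, so $X_r\cong\operatorname{Mat}_m$ is a smooth affine space and trivially Gorenstein.

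No serious obstacle arises; the only point deserving care is the reduction step, where one must invoke both Cohen--Macaulayness (to ensure Gorenstein is equivalent to Cartierness of $K$) and triviality of $\operatorname{Pic}$ (to upgrade Cartier to principal), so that the explicit class-group computation of Proposition~\ref{prop.classgroup} suffices. Everything else is a direct inspection of the formulas provided by that proposition.
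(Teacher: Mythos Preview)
Your reduction to the vanishing of the canonical class is exactly the intended argument: Cohen--Macaulayness from Theorem~\ref{thm.Frobsplratres}, triviality of $\operatorname{Pic}$ from the cone structure, and then the explicit formulas of Proposition~\ref{prop.classgroup}. The case analysis in (i)--(iii) is correct.

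There is, however, a gap in your treatment of the monoid $M$. The corollary asserts that \emph{all} $G\times G$-orbit closures in $M\setminus G$ are Gorenstein, and these are the $X_{rs}$ for all $r,s\ge0$ with $r+s\le m$, not only the prime divisors $X_{r,m-r}$. After establishing the identification with circular complexes in the square case $m=n$ (which you do), you must also invoke case~(iii) for the orbit closures with $r+s<m$ and $(r,s)\ne(0,0)$: there the canonical class is $(n-m)(D_{r1}+D_{s1})=0$, hence Gorenstein. The remaining closure $X_{0,0}$ is a single point. Once you add this sentence your argument is complete and matches the paper.
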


\noindent{\it Acknowledgement}. I would like to thank M.~Brion and L.~Renner for helpful email discussions and M.~Brion also for his comments on a first draft of this paper. This research was funded by a research grant from The Leverhulme Trust.

\bigskip

{\sc\noindent School of Mathematics,
Trinity College Dublin, College Green, Dublin 2, Ireland.
{\it E-mail address : }{\tt tanger@tcd.ie}
}

\end{document}